\documentclass[hidelinks,11pt]{amsart}
\usepackage{amsmath,amsthm,amsfonts,amssymb}
\usepackage{parskip,fullpage}
\usepackage{longtable}
\usepackage{hyperref}
\usepackage[dvipsnames]{xcolor}
\usepackage{comment}
\usepackage{graphicx}
\usepackage{geometry}
\usepackage[shortlabels]{enumitem}
\usepackage{pgfplots}
\usepackage[table]{colortbl}
\pgfplotsset{compat=1.18}
\usepackage{subcaption}
\usepackage[american]{babel}
\usepackage{dsfont}
\hypersetup{
    colorlinks=true,
    citecolor=red
}
\usepackage[autostyle=true]{csquotes}
\usepackage[
backend=biber,
style=alphabetic,
sorting=nyt
]{biblatex}
\DeclareLabelalphaTemplate{
  \labelelement{
    \field[final]{shorthand}
    \field{label}
    \field[strwidth=3,strside=left,ifnames=1]{labelname}
    \field[strwidth=1,strside=left]{labelname}
  }
  \labelelement{
    \field[strwidth=4,strside=right]{year}
  }
}

\numberwithin{equation}{section}
\newtheorem{theorem}{Theorem}[section]
\newtheorem{lemma}[theorem]{Lemma}
\newtheorem{remark}[theorem]{Remark}

\newtheorem{corollary}[theorem]{Corollary}
\newtheorem{proposition}[theorem]{Proposition}

\newtheorem{example}[theorem]{Example}
\newtheorem{conjecture}[theorem]{Conjecture}

\newtheorem{problem}[theorem]{Problem}

\def\N{\mathbb{N}}

\def\R{\mathbb{R}}

\def\HH{\mathcal{H}}

\def\A{\mathcal{A}}

\newcommand{\suchthat}{\;\ifnum\currentgrouptype=16 \middle\fi|\;}

\title{Constructions and reductions on the Hornich-Hlawka inequality and its generalizations}
\author{Seokbin Lee}
\address{School of Mathematics, Georgia Institute of Technology, 686 Cherry Street Atlanta, GA 30332, USA}
\email{slee3379@gatech.edu}

\subjclass[2020]{Primary 46B20; Secondary 52A40, 52A21, 46B04, 46B03}
\keywords{Hornich-Hlawka inequality, Banach space geometry, four-ball intersection, symmetric lens, seminorm, Nakano spaces, Sobolev spaces}

\begin{document}

\begin{abstract}
We investigate five open problems concerning the Hornich-Hlawka inequality and its generalizations posed by Marinescu and Niculescu. We give constructive answers to three of them. First, we construct an equivalent norm on $\ell^2$ with no three-dimensional Hornich-Hlawka subspace. Second, we present a Nakano space that is not Hornich-Hlawka. Third, we show that the $\alpha$-power generalization of the Hornich-Hlawka inequality fails to hold on nonzero normed spaces when $\alpha \in (1, 2) \cup (2, \infty)$. This implies that on every nonzero inner product space, the $\alpha$-power generalization of the Hornich-Hlawka inequality holds for all triples precisely when $\alpha \in (0,1] \cup \{2\}$. For the remaining two problems, we establish reductions. We prove that $\ell^p(N)$ is Hornich-Hlawka for all $N \geq 3$ and $2 < p \leq \log 3 /(\log 3 - \log 2)$ if and only if it is Hornich-Hlawka at $N = 3$ and $p = \log 3 /(\log 3 - \log 2)$. Finally, for $p(\cdot): (0, 1) \to [1, 2]$ measurable and integers $k \geq 0$, we establish a Sobolev-Nakano equivalence: we prove that $W^{k,p(\cdot)}(0,1)$, equipped with the sum of the Luxemburg norms of its derivatives of orders $0, \ldots, k$, is Hornich-Hlawka if and only if $L^{p(\cdot)}(0,1)$ is Hornich-Hlawka.
\end{abstract}
\maketitle

\section{Introduction} \label{sec: introduction}
For every $x, y, z \in \R^n$, the following inequality holds, called the \emph{Hornich-Hlawka inequality}:
\begin{align}
\lVert x \rVert_2 + \lVert y \rVert_2 + \lVert z \rVert_2 +\lVert x + y + z \rVert_2 \geq \lVert x + y \rVert_2 +  \lVert  y + z \rVert_2 +  \lVert z + x \rVert_2.
\end{align}
The inequality can be traced back to Hornich and Hlawka \cite{hornich1942inequality, hlawka1990inequality}, who stated and proved the inequality. Hlawka's elementary proof involves multiplying LHS - RHS by the LHS and invoking the \emph{Fr\'echet identity}
\begin{align} \label{eq: frechet}
\lVert x \rVert_2^2 + \lVert y \rVert_2^2 + \lVert z \rVert_2^2 +\lVert x + y + z \rVert_2^2 = \lVert x + y \rVert_2^2 +  \lVert  y + z \rVert_2^2 +  \lVert z + x \rVert_2^2,
\end{align}
which is equivalent to the parallelogram law. The Hornich-Hlawka inequality then reduces to the triangle inequality. This proof generalizes to all inner product spaces (and hence Hilbert spaces).

Let $X$ be a Banach space, and let $x, y, z \in X$. We say that $X$ is a \emph{Hornich-Hlawka space} if its norm satisfies the Hornich-Hlawka inequality
\begin{align}
\lVert x \rVert + \lVert y \rVert + \lVert z \rVert +\lVert x + y + z \rVert \geq \lVert x + y \rVert +  \lVert  y + z \rVert +  \lVert z + x \rVert \label{eq: hh inequality}
\end{align}
for all $x, y, z \in X$. In this case, we also say that $X$ has the \emph{Hornich-Hlawka property}, or simply that $X$ is \emph{Hornich-Hlawka}. A Hornich-Hlawka space is also called a \emph{quadrilateral space} in some literature \cite{smiley1964polygonal, simon1995geometric}, as the Hornich-Hlawka inequality is equivalent to an inequality on quadrilaterals.
Although every Hilbert space is Hornich-Hlawka, not every Banach space is Hornich-Hlawka.

Recently, Marinescu and Niculescu \cite{marinescu2025survey} conducted a broad survey of the Hornich-Hlawka inequality and its generalizations; as illustrated there, the Hornich-Hlawka inequality enjoys connections to several areas of mathematics, including operator theory \cites{lindenstrauss1968operators, audenaert2012problems}, probability and statistics \cites{levy1939addition, ferguson1962cauchy}, and convex geometry \cites{levi1949linear, vasic1969inequalities, mitrinovi1993classical}. The authors of \cite{marinescu2025survey} also posed five open problems in Banach space theory relating to the Hornich-Hlawka inequality and its generalizations that were described as seeming ``of greatest interest." We follow their conventions and number these problems as Problems 1, 2, 3, 4, and 5.  

In this paper, we explore each of these problems in detail. 

\subsection{Main results} 
First, for Problem 1, we prove two reductions (Theorem \ref{thm: all lpn} and Corollary \ref{cor: p0}) that imply the following:

\begin{theorem} \label{thm: endpoint}
Let $p_0 =\log 3 /(\log 3 - \log 2) \approx 2.7095\cdots$. Then the space $\ell^p(N)$ is Hornich-Hlawka for all $N \geq 3$ and $2 < p \leq p_0$ if and only if $\ell^{p_0}(3)$ is Hornich-Hlawka. 
\end{theorem}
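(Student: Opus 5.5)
The theorem combines two reductions, one in the dimension $N$ and one in the exponent $p$. The direction ``only if'' is trivial: $\ell^{p_0}(3)$ is one of the spaces in the family. For ``if'' we argue in two steps.

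\textbf{Step 1 (reduction in $N$; the content of Theorem \ref{thm: all lpn}).} We aim to show that for fixed $p$, if $\ell^p(3)$ is Hornich-Hlawka then so is $\ell^p(N)$ for every $N \geq 3$. The key observation is that any triple $x,y,z\in\ell^p(N)$ spans a subspace of dimension at most $3$. Dimension alone does not let us embed this span isometrically into $\ell^p(3)$, so we use the coordinate structure instead. For each $i\le N$, view the coordinates $(x_i,y_i,z_i)$ as a vector in $\R^3$, and write the norms of the seven combinations as $p$-norms over $i$ of these coordinate values. Then we try a standard lattice/convexity argument: every subset-sum norm has the form $\|\sum_i |a_i|^p\|^{1/p}$. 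We try to show that the defect
\[
\Phi = \|x\|+\|y\|+\|z\|+\|x+y+z\|-\|x+y\|-\|y+z\|-\|z+x\|
\]
is minimized, over configurations with prescribed structure, when only three ``coordinate directions'' are active. Concretely, we parametrize the $N$ coordinates as points $(x_i,y_i,z_i)\in\R^3$ and regard the seven $p$-th powers as linear functionals of the empirical measure on $\R^3$. Minimizing the concave-type function $\Phi$, a combination of $t\mapsto t^{1/p}$, over measures with fixed values of these functionals leaves an extreme-point problem. Carathéodory's theorem then reduces the support to few atoms. The hard part is to bring the support down to exactly $3$ atoms rather than the count Carathéodory gives naively. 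If that fails, we instead use that homogeneity lets atoms with proportional vectors be merged.

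\textbf{Step 2 (reduction in $p$; Corollary \ref{cor: p0}).} We aim to show that the Hornich-Hlawka property of $\ell^{p}(N)$ is monotone decreasing in $p$ on $(2,p_0]$, i.e.\ it holds for $p$ whenever it holds for some $q\ge p$. First we try the classical fact that $\ell^p$ embeds isometrically into $L^q$ for $2\le q\le p$? No, that inclusion runs the wrong way: $L^q$ embeds in $L^p$ only for $1\le p\le q\le 2$. For $p>2$ we therefore take a different route. We write $\|v\|_p^p$ via the stable-type or Hlawka-type integral representation. Alternatively we use the interpolation-style argument that $\Phi_p(x,y,z)\ge0$ is implied by $\Phi_{p_0}\ge0$ applied to the vectors $(|x_i|^{p/p_0}\mathrm{sgn})$. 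The exponent $p_0$ is the one where the extremal example $x=y=z=e$ in three coordinates becomes tight: there $3^{1/p}$ relates to $2^{\cdots}$, and $p_0=\log3/(\log3-\log2)$ solves the resulting equality. We then check that for $p>p_0$ this example fails, which explains the endpoint.

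Combining the two steps, $\ell^{p_0}(3)$ Hornich-Hlawka gives $\ell^{p_0}(N)$ Hornich-Hlawka for all $N\ge3$ by Step 1. Step 2 then gives every $p\in(2,p_0]$. I expect the main obstacle to be Step 2's monotonicity in $p$. There I would first try a homotopy argument: show that, as a function of $p$, the minimum of $\Phi_p$ over unit triples in $\ell^p(3)$ can only cross zero at $p_0$.
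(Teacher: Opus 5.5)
Your two-step architecture (reduce in $N$, reduce in $p$, combine) matches the paper's, but neither step contains the idea that makes it work. In Step 1, the defect $\Phi$ is not concave as a function of the coordinate weights: it is a difference of concave functions, with four terms carrying a plus sign and three a minus sign. If you fix all seven $p$-th powers there is nothing left to minimize. Carath\'eodory with seven linear constraints only brings you down to at most seven atoms, i.e.\ to $\ell^p(7)$, and merging proportional atoms does not close this gap.

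The missing move is to fix exactly the three quantities on one side of the inequality. In the form \eqref{eq: uvw rewrite}:
\begin{itemize}
\item normalize each nonzero coordinate triple $t_j=(u_j,v_j,w_j)$ to $r_j=\lambda_j^{-1/p}t_j$ on the $\ell^p(3)$-sphere, and put $g_j=(|r_{j1}|^p,|r_{j2}|^p,|r_{j3}|^p)$;
\item consider all weights $\alpha\ge0$ with $\sum_j\alpha_jg_j=(\|u\|_p^p,\|v\|_p^p,\|w\|_p^p)$. This set is a compact polytope, since each $g_j$ has coordinate sum $1$;
\item on this polytope the right-hand side $2(\|u\|_p+\|v\|_p+\|w\|_p)$ is constant, and the left-hand side $\sum_{\sigma\in\HH}\bigl(\sum_j\alpha_j|\sigma_1r_{j1}+\sigma_2r_{j2}+\sigma_3r_{j3}|^p\bigr)^{1/p}$ is concave.
\end{itemize}
So the left-hand side is minimized at a vertex. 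A vertex has at most three positive weights, because the corresponding $g_j\in\R^3$ must be linearly independent. (Equivalently, in the $x,y,z$ form, fix $\|x+y\|_p^p,\|y+z\|_p^p,\|z+x\|_p^p$.)

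In Step 2, none of the routes you list proves monotonicity in $p$.
\begin{itemize}
\item The substitution $x_i\mapsto\mathrm{sgn}(x_i)|x_i|^{p/p_0}$ is the nonlinear Mazur map. It does not send $x+y$ or $x+y+z$ to the sums of the images, so $\Phi_{p_0}$ at the image triple says nothing about $\Phi_p(x,y,z)$.
\item The homotopy plan (showing the minimum of $\Phi_p$ changes sign only at $p_0$) would amount to settling the open conjecture itself, and you give no mechanism for it.
\item ``A stable-type integral representation of $\|v\|_p^p$'' is only named, not carried out.
\end{itemize}
What the paper uses is a \emph{linear} isometric embedding $\ell^r(3)\hookrightarrow L^1_{\ell^p(3)}(\Omega,\mathbb{P})$ for $1<r<p$. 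Take $S_1,S_2,S_3$ i.i.d.\ positive $\alpha$-stable with $\alpha=r/p$, and set $(Tx)(\omega)=m^{-1}\bigl(S_1^{1/p}x_1,S_2^{1/p}x_2,S_3^{1/p}x_3\bigr)$ with $m=\mathbb{E}[S^{1/p}]$. This $m$ is finite because $1/p<\alpha$, which is where $r>1$ enters. Stability gives $\sum_jS_j|x_j|^p\overset{d}{=}\|x\|_r^pS$, hence $\|Tx\|_{L^1}=\|x\|_r$. Proposition \ref{prop: bochner hh} then transfers the Hornich-Hlawka property from $\ell^p(3)$ to $\ell^r(3)$. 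The same construction with $N$ stable variables works for $\ell^p(N)$, so your order of steps would also be fine.

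As a side remark, your heuristic for $p_0$ is off: $x=y=z$ gives equality in every normed space. The extremal triple is $x=(-1,1,1)$, $y=(1,-1,1)$, $z=(1,1,-1)$, for which the inequality reads $4\cdot3^{1/p}\ge6$, i.e.\ $p\le p_0$.
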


Thus, if $\ell^{p_0}(3)$ were proven to be Hornich-Hlawka, then we would obtain a complete classification of finite-dimensional $\ell^p$ spaces that are Hornich-Hlawka (see Table \ref{table: table}). We also provide two convex-geometric reformulations of the Hornich-Hlawka inequality in Theorem \ref{thm: four ball equivalence}.

For Problem 2, we prove the existence of a renorming on $\ell^2$ that admits an infinite-dimensional Banach space without a three-dimensional Hornich-Hlawka subspace:

\begin{theorem} \label{thm: l^2 equivalent}
There exists an equivalent norm $\lVert \cdot \rVert_X$ on the real Hilbert space $\ell^2$ such that no three-dimensional subspace of $X = (\ell^2, \lVert \cdot \rVert_X)$ is Hornich-Hlawka. 
\end{theorem}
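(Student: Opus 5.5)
We build the norm as a sum of the Hilbert norm and a small non-Hornich-Hlawka perturbation that is visible in every direction of every three-dimensional subspace. The starting point is a finite-dimensional seed. Some normed plane fails the inequality, for instance $\ell^\infty(2)$: with $x=(1,0)$, $y=(0,1)$, $z=(-1,-1)$ we get $1+1+1+1=4$ on the left and $1+1+1=3$ on the right, which holds. So we use $\ell^1(3)$ with $x=(1,1,0)$, $y=(0,1,1)$, $z=(1,0,1)$. Then the left side is $2+2+2+\lVert(2,2,2)\rVert_1=12$, while the right side is $3\cdot 4=12$, which is only an equality. Since failure is known to occur in $\ell^\infty(3)$, we fix a norm $\lVert\cdot\rVert_Y$ on some $\R^m$ together with vectors giving a strict failure of \eqref{eq: hh inequality} by a margin $\delta>0$.

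The key reduction is a quantitative stability statement. Let $E$ be any three-dimensional subspace with an arbitrary norm $\lVert\cdot\rVert_E$ that is $C$-equivalent to the Euclidean norm. We want $\lVert\cdot\rVert_2+\lambda\lVert\cdot\rVert_E$ (or $\max\{\lVert\cdot\rVert_2,\lambda \lVert\cdot\rVert_E\}$) to fail Hornich-Hlawka whenever $\lVert\cdot\rVert_E$ itself fails it by a definite margin. Since the Euclidean part satisfies HH, additivity goes the wrong way for sums. So we instead use a norm of the form
\[\lVert v\rVert_X=\max\Bigl\{\varepsilon\lVert v\rVert_2,\ \sup_j \lVert P_j v\rVert_{Y_j}\Bigr\},\]
or an $\ell^\infty$-type direct sum, because $\ell^\infty$-sums tend to inherit failures. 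Here $P_j$ projects onto finite blocks $F_j$ of an orthonormal decomposition of $\ell^2$, and each block carries a copy of a norm of $\ell^\infty(n_j)$-type that is uniformly equivalent to Euclidean. Uniform equivalence of the block norms is essential, so the norm cannot be taken literally as $\ell^\infty(n)$, whose equivalence constants blow up. Instead we use, on each block, a fixed-dimensional seed tensored across the block, or a norm whose unit ball is the Euclidean ball cut by finitely many slabs. Such a ball has flat faces, and these flat faces create HH failures.

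Concretely, we aim for an equivalent norm whose unit ball $B$ satisfies $\tfrac12 B_2\subset B\subset B_2$. We want $B$ to have the property that every three-dimensional section $B\cap E$ contains a nonsmooth or "polyhedral-like" configuration producing three vectors violating \eqref{eq: hh inequality}. A natural candidate uses $\lVert v\rVert_X=\sup_{k}\lvert\langle v,u_k\rangle\rvert\vee \tfrac12\lVert v\rVert_2$ for a dense countable set of unit vectors $u_k$ with well-chosen weights. We then try to show that in every $E$ one can find $x,y,z$ near a region where the norm is locally a maximum of linear functionals. A space whose norm is locally $\ell^\infty(3)$-like on a neighborhood of $x,y,z$ and their sums fails HH.

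The main obstacle is this last step: guaranteeing that every three-dimensional section, including sections avoiding any fixed finite set of directions, inherits a strict violation with a uniform local structure. We would first try a compactness argument on the Grassmannian of three-planes in each finite block, together with the perturbation lemma that a strict HH failure is an open condition in the Banach–Mazur sense. The infinite-dimensional difficulty would then be handled by choosing the blocks and the dense families $u_k$ inductively, so that every $E$ is close, within the tolerance $\delta$, to a plane whose section already fails.
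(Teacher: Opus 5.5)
Your proposal does not contain a proof yet. The decisive step is left open as the ``main obstacle,'' and the concrete constructions you offer either fail or are unspecified.

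\textbf{The block norm fails.} The norm $\max\{\varepsilon\lVert v\rVert_2,\sup_j\lVert P_jv\rVert_{Y_j}\}$ fails on spread-out subspaces. Take unit vectors $f_j\in F_j$, disjoint index sets $J_1,J_2,J_3$ of size $N$, and $v_i=N^{-1/2}\sum_{j\in J_i}f_j$. Every $v\in E=\mathrm{span}\{v_1,v_2,v_3\}$ then satisfies $\lVert P_jv\rVert_{Y_j}\le CN^{-1/2}\lVert v\rVert_2$. So for $N>C^2/\varepsilon^2$ your norm on $E$ is exactly $\varepsilon\lVert\cdot\rVert_2$, which is Hornich-Hlawka.

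\textbf{The covering plan cannot give a fixed tolerance.} The Grassmannian of three-planes in $\ell^2$ is not compact. Moreover, by Dvoretzky's theorem your space has three-dimensional subspaces on which $|x|\le\lVert x\rVert\le(1+\eta)|x|$ for a Euclidean norm $|\cdot|$ and any prescribed $\eta>0$. On these subspaces $\Phi_{\lVert\cdot\rVert}(u,v,w)\ge-2\eta(\lVert u\rVert+\lVert v\rVert+\lVert w\rVert)$. Hence the failure margin must degenerate as $E$ varies, and no argument producing a uniform margin can work.

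\textbf{The dense-functional candidate is unspecified.} Without weights, $\sup_k|\langle v,u_k\rangle|\vee\frac12\lVert v\rVert_2$ is just $\lVert v\rVert_2$. With weights, you give no argument that every three-dimensional section acquires a point where the norm is locally a maximum of three independent functionals.

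\textbf{The missing idea is the one you dismissed: sums do work, at first order.} Because $\Phi_P$ is linear in the seminorm $P$, each summand can be analyzed separately at perturbations $u_\delta=r+\delta a$, $v_\delta=r+\delta b$, $w_\delta=r+\delta c$ of a single ray.
\begin{itemize}
\item If $P$ is differentiable at $r\ne0$, then $DP(3r)=DP(r)=L$, the first-order terms are $2L(a+b+c)$ on both sides, and $\Phi_P=o(\delta)$. So the Hornich-Hlawka property of the Hilbert part is invisible at first order.
\item For $M=\lVert F(\cdot)\rVert_\infty$ with $r=F^{-1}(1,1,1)$ and $a,b,c=F^{-1}e_1,F^{-1}e_2,F^{-1}e_3$, one gets exactly $\Phi_M=-2\delta$.
\end{itemize}

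This is how the paper proceeds. It takes $\lVert x\rVert_X=\lVert x\rVert_2+\sum_m\varepsilon_mM_m(x)$ with $\varepsilon_m=2^{-2^m}$. Each $M_m$ is such a max-seminorm built from a triple $(g_i,g_j,g_k)$ of a dense sequence in the unit sphere, rescaled by $1,\tau,\tau^2$.
\begin{itemize}
\item Every $E$ has some triple restricting to an \emph{exact} isomorphism $G_E$, so no approximation or compactness is needed.
\item $\tau$ is chosen so that $r_\tau=G_E^{-1}(1,\tau,\tau^2)$ avoids the finitely many nondifferentiability planes of the earlier $M_k|_E$. Each plane excludes at most two values of $\tau$. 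This makes the head contribute $o(\delta)$.
\item The tail is bounded by $6C_E\delta\sum_{k>m}\varepsilon_k\le\frac12\varepsilon_m\delta$, using the $1$-Lipschitz bound on each $M_k$.
\item Combined with the cusp term, this gives $\Phi\le-\varepsilon_m\delta<0$. The margin degenerates with $E$, consistent with the Dvoretzky obstruction.
\end{itemize}
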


For Problem 3, we exhibit an explicit function $p(\cdot):(0, 1) \to [1, 2]$ such that the Nakano space $L^{p(\cdot)}(0, 1)$ is not Hornich-Hlawka:

\begin{theorem} \label{thm: nakano counterexample}
Define the function
\[
p(t) = \begin{cases}
1, \quad  &0 < t < \frac35 \\
5t - 2, \quad &\frac35 \leq t \leq \frac45 \\
2, \quad &\frac45 < t < 1
\end{cases}.
\]
Then $L^{p(\cdot)}(0, 1)$ is not Hornich-Hlawka.
\end{theorem}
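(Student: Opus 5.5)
The plan is to exhibit three explicit functions $x,y,z\in L^{p(\cdot)}(0,1)$ that violate \eqref{eq: hh inequality}. I will choose them supported only on $A=(0,\tfrac35)$, where $p\equiv1$, and on $B=(\tfrac45,1)$, where $p\equiv2$. The transition interval is then irrelevant and the Luxemburg norm can be computed in closed form. For $f$ supported on $A\cup B$, the defining condition $\int|f/\lambda|^{p(t)}\,dt=1$ becomes
\[
\frac{s}{\lambda}+\frac{t^2}{\lambda^2}=1,\qquad s=\int_A|f|,\quad t=\Bigl(\int_B|f|^2\Bigr)^{1/2},
\]
so that
\[
\lVert f\rVert=N(s,t):=\frac{s+\sqrt{s^2+4t^2}}{2}.
\]
Hence the subspace of such functions is isometric to $L^1(A)\oplus_N L^2(B)$. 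Here $N$ is a monotone, absolute, non-Euclidean and non-$\ell^1$ norm on $\R^2$, so it interpolates between an $\ell^1$-type and an $\ell^2$-type behaviour.

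A two-dimensional normed space is always Hornich-Hlawka, so the counterexample must live in at least three dimensions. I would take the span of three vectors: two disjoint indicator pieces $\mathds{1}_{A_1},\mathds{1}_{A_2}$ with $A=A_1\sqcup A_2$, carrying an $\ell^1$ structure, and one function $\mathds{1}_B$, or alternatively one $L^1$ direction and two orthogonal $L^2$ directions on $B$. On such a subspace the norm of $\alpha\mathds{1}_{A_1}+\beta\mathds{1}_{A_2}+\gamma\mathds{1}_B$ equals
\[
N\bigl(|\alpha||A_1|+|\beta||A_2|,\ |\gamma||B|^{1/2}\bigr).
\]
The Hornich-Hlawka defect of a triple is therefore an explicit finite combination of expressions $\tfrac12\bigl(s+\sqrt{s^2+4t^2}\bigr)$.

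The key step, and the main obstacle, is choosing the triple. I would first try symmetric families with a few free parameters. One candidate is $x=a\mathds{1}_{A_1}+c\mathds{1}_B$, $y=a\mathds{1}_{A_2}+c\mathds{1}_B$, $z=-b(\mathds{1}_{A_1}+\mathds{1}_{A_2})+d\mathds{1}_B$, with cancellations chosen so that the pairwise sums lose their $L^1$ mass while the triple sum keeps its $L^2$ part. The intuition is that the $\ell^1$ coordinate rewards splitting mass, while the $N$-coupling penalises combining $L^1$ and $L^2$ mass sublinearly. I would scan numerically over these parameters for a point where
\[
\lVert x\rVert+\lVert y\rVert+\lVert z\rVert+\lVert x+y+z\rVert<\lVert x+y\rVert+\lVert y+z\rVert+\lVert z+x\rVert,
\]
then round to rational parameters, for instance with $|A_1|=|A_2|=\tfrac3{10}$ and $|B|=\tfrac15$.

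Finally, I would verify the strict inequality for the rounded choice by exact arithmetic. Each of the seven norms is $\tfrac12(s+\sqrt{D})$ with rational $s$ and $D$, so the claim reduces to comparing sums of square roots. This can be certified by rational bounds on each $\sqrt D$ with enough margin, which gives that $L^{p(\cdot)}(0,1)$ is not Hornich-Hlawka.
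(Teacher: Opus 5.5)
Your reduction matches the paper's setup exactly: supports in the regions where $p\equiv 1$ and $p\equiv 2$, the closed form $N(s,t)=\frac12\bigl(s+\sqrt{s^2+4t^2}\bigr)$, and a span of two $L^1$ atoms and one $L^2$ atom. But the proof of this theorem \emph{is} an explicit violating triple, and you never produce one. Worse, both concrete searches you describe would come back empty.

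Put $e_1=\mathds{1}_{A_1}/|A_1|$, $e_2=\mathds{1}_{A_2}/|A_2|$, $e_3=\mathds{1}_B/|B|^{1/2}$, so that $\lVert \alpha e_1+\beta e_2+\gamma e_3\rVert=N(|\alpha|+|\beta|,|\gamma|)$. With $|A_1|=|A_2|$ as you suggest, your family becomes, after rescaling the parameters, $x=ae_1+ce_3$, $y=ae_2+ce_3$, $z=-b(e_1+e_2)+de_3$. Then $\lVert x+y\rVert=N(2|a|,2|c|)=\lVert x\rVert+\lVert y\rVert$ and $\lVert x+z\rVert=\lVert y+z\rVert=N(|a-b|+|b|,|c+d|)$. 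So the left side minus the right side of \eqref{eq: hh inequality} equals
\begin{align*}
&N(2|b|,|d|)+N(2|a-b|,|2c+d|)-2N(|a-b|+|b|,|c+d|)\\
&\qquad\geq N\bigl(2|b|+2|a-b|,\,|d|+|2c+d|\bigr)-2N(|a-b|+|b|,|c+d|)\geq 0.
\end{align*}
The first step is the triangle inequality for $N$ on $[0,\infty)^2$. The second uses $|d|+|2c+d|\geq 2|c+d|$ together with the monotonicity and homogeneity of $N$.

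Your alternative (one $L^1$ direction and two $L^2$ directions) fails for a structural reason. Since $N(s,t)=\frac{s}{2}+\frac12\sqrt{s^2+4t^2}$, on any subspace whose restrictions to $A$ are multiples of a single function, the norm is $\frac12|\ell(f)|$ plus one half of a Euclidean norm, for some linear functional $\ell$. Both summands satisfy the Hornich-Hlawka inequality, and the Hornich-Hlawka defect is linear in the seminorm, so no violation exists there.

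What is missing is a triple that breaks the $A_1\leftrightarrow A_2$ symmetry. The paper takes $A_1=(0,\frac15)$ and $A_2=(\frac15,\frac25)$, but its triple works in your normalized coordinates for any disjoint $A_1,A_2\subseteq A$ of positive measure:
\[
x=-2e_1-e_3,\qquad y=-2e_2,\qquad z=2e_2+e_3.
\]
Then $x+y=-2e_1-2e_2-e_3$, $y+z=e_3$, $z+x=2(e_2-e_1)$ and $x+y+z=-2e_1$. The seven norms are
\[
\lVert x\rVert=\lVert z\rVert=N(2,1)=1+\sqrt2,\quad \lVert y\rVert=\lVert x+y+z\rVert=2,\quad \lVert x+y\rVert=N(4,1)=2+\sqrt5,\quad \lVert y+z\rVert=1,\quad \lVert z+x\rVert=4.
\]
So the left side of \eqref{eq: hh inequality} is $6+2\sqrt2<9$, while the right side is $7+\sqrt5>9$; no numerical certification is needed. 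In this example the $\ell^1$ summand $\frac{s}{2}$ has zero defect, and the whole violation comes from $\frac12\sqrt{s^2+4t^2}$. The sum $z+x$ uses $\ell^1$-additivity across the two atoms to reach norm $4$, while $y+z$ is purely $L^2$.
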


For Problem 4, we provide an explicit reduction of a Sobolev space $W^{k, \, p(\cdot)}(0, 1)$ having the Hornich-Hlawka property to the associated Nakano space $L^{p(\cdot)}(0, 1)$ when assuming a particular choice of norm for the Sobolev space.

\begin{theorem} \label{thm: sobolev nakano}
Let $p(\cdot): (0, 1) \to [1, 2]$ be measurable and let $k \in \mathbb{N} \cup \{0\}$. With the choice of the norm \eqref{eq: sobolev norm} for $W^{k, \, p(\cdot)}(0, 1)$, the Sobolev space $W^{k, \, p(\cdot)}(0, 1)$ is Hornich-Hlawka if and only if $L^{p(\cdot)}(0, 1)$ is Hornich-Hlawka.
\end{theorem}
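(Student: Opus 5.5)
The Sobolev norm \eqref{eq: sobolev norm} is presumably $\lVert u\rVert_{W^{k,p(\cdot)}} = \sum_{j=0}^k \lVert u^{(j)}\rVert_{L^{p(\cdot)}}$. The plan is to prove the two directions separately. The direction ``Nakano HH $\Rightarrow$ Sobolev HH'' should be formal, and the converse should come from an isometric embedding.

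For the first direction, assume $L^{p(\cdot)}(0,1)$ is Hornich-Hlawka and take $u,v,w \in W^{k,p(\cdot)}(0,1)$. Differentiation is linear, so $(u+v)^{(j)} = u^{(j)}+v^{(j)}$, and similarly for the other sums. Apply the Hornich-Hlawka inequality \eqref{eq: hh inequality} in $L^{p(\cdot)}$ to the triple $(u^{(j)},v^{(j)},w^{(j)})$ for each $j=0,\dots,k$, and sum over $j$. This gives the inequality for the Sobolev norm. More generally, an $\ell^1$-sum of seminorms satisfying \eqref{eq: hh inequality} again satisfies it, because the inequality is linear in the norm.

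For the converse, assume $W^{k,p(\cdot)}(0,1)$ is Hornich-Hlawka. The case $k=0$ is trivial, so take $k\ge 1$. The natural idea is to realize $L^{p(\cdot)}$ inside $W^{k,p(\cdot)}$ up to a norm that is negligible or of a controllable form. I would try the map $f \mapsto u_f$, where $u_f$ is the $k$-fold iterated primitive $u_f(t)=\int_0^t \frac{(t-s)^{k-1}}{(k-1)!}f(s)\,ds$, so that $u_f^{(k)}=f$. The lower-order terms $\lVert u_f^{(j)}\rVert$ for $j<k$ are not zero, so the embedding is not isometric.

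The fix is a scaling argument. Compose with a dilation that concentrates $f$ on a tiny interval, making the lower derivatives small relative to $f$. The catch is that the variable exponent is not dilation invariant, so I would instead localize.

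1. Since $p(\cdot)\ge 1$ and the interval is bounded, $\lVert u_f^{(j)}\rVert_{L^{p(\cdot)}} \le C\lVert u_f^{(j)}\rVert_\infty$.
2. For $j<k$, this is at most $C\lVert f\rVert_{L^1(I)}$ times a power of $|I|$ when $f$ is supported on a short interval $I$.

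The better route is to exploit the failure of Hornich-Hlawka directly. Suppose $f,g,h$ violate \eqref{eq: hh inequality} in $L^{p(\cdot)}$ with defect $\delta>0$. The defect is stable under small perturbations by continuity of the norm. So it suffices to find an approximating violating triple in $L^{p(\cdot)}$ for which the lower-order terms of the primitives are uniformly tiny.

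To get this, replace $f$ by highly oscillating modifications $f_n$ with $\int f_n\,(\cdot)$ against polynomials of degree $<k$ small on all subintervals, while keeping the norms of $f_n, g_n, h_n$ and of their pairwise and triple sums nearly unchanged. For example, take $f_n(t)=f(t)\,r_n(t)$ with $r_n$ a Rademacher-type $\pm1$ function. Then $|f_n|=|f|$, so the modular, and hence the Luxemburg norm, is unchanged. The same multiplier $r_n$ must be used for all three functions, so that the sums also satisfy $|f_n+g_n|=|f+g|$, and so on. By the Riemann–Lebesgue-type weak convergence $f r_n \rightharpoonup 0$, the iterated primitives converge to $0$ uniformly (they are compact Volterra operators applied to $f r_n$). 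Therefore the Sobolev norm of $u_{f_n}$, and of each relevant sum, converges to $\lVert f\rVert_{L^{p(\cdot)}}$, and likewise for the other terms. The violation then transfers to $W^{k,p(\cdot)}$, a contradiction.

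The main obstacle is justifying the uniform convergence of the primitives of $f r_n$ for general $f\in L^{p(\cdot)}\subset L^1$. This follows from $L^1$ weak convergence together with equicontinuity of the kernel.
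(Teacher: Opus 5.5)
Your proposal is correct and follows essentially the same route as the paper. For the forward direction you sum the coordinatewise inequality, which the paper phrases as an isometric embedding $f\mapsto(f,f',\dots,f^{(k)})$ into $\ell^1_{k+1}(L^{p(\cdot)})$. For the converse you lift a violating triple via $I^k(\varepsilon_n\,\cdot\,)$ with one common Rademacher-type sign function $\varepsilon_n$ and show the lower-order terms vanish uniformly. One small correction: $I\colon L^1\to C[0,1]$ is not compact, so the uniform convergence of the first primitive does not follow from compactness or from ``equicontinuity of the kernel.'' Instead, justify it from equicontinuity of $\{I(f\varepsilon_n)\}_n$, which comes from the domination $|f\varepsilon_n|=|f|$, together with pointwise convergence to zero, or by approximating $f$ in $L^1$ by continuous functions as the paper does.
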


For Problem 5, we give two elementary examples that show that the $\alpha$-power generalization of the Hornich-Hlawka inequality cannot hold for any nonzero normed space when $\alpha \in (1, 2) \cup (2, \infty)$. As the statements and proofs of both examples are short and elementary, we refer the reader to Theorems \ref{thm: x=y=z} and \ref{thm: x=y=-z} in Section \ref{sec: problem 5} for the details.

The remainder of the paper is organized as follows. Section \ref{sec: notation} provides the common notation that will be used throughout the paper. Section \ref{sec: preliminaries} contains a review of preliminary results. Sections \ref{sec: problem 1} through \ref{sec: problem 5} discuss our original contributions to each numbered problem in order. 

\section{Notation and Terminology} \label{sec: notation}

When writing the variables $p$ and $N$, we always assume that $1 \leq p \leq \infty$ and $N \in \N$ unless their ranges are otherwise specified. 

Following the notation of \cite{marinescu2025survey}, we denote by $\ell^p(N) = (\R^N, \, \lVert \cdot \rVert_p)$ the space of $N$-tuples equipped with the $p$-norm, and we denote by $\ell^p$ the infinite-dimensional sequence-space analogue. The ambient field is always assumed to be the field of real numbers $\R$.

The triple $(\Omega, \, \A, \, \mu)$ will always denote a measure space. In the context of probability spaces, we will write $\mathbb{P}$ instead of $\mu$ to emphasize that the measure function is a probability measure.

We write $L^p(\Omega, \, \mu)$ to denote the space of $p$-th power Lebesgue integrable functions $f: \Omega \to \R$ with respect to $\mu$, with the understanding that $p = \infty$ means we take the essential supremum norm. Moreover, when $X$ is a Banach space, we write $L_X^p(\Omega, \,\mu)$ to denote the space of $p$-th power Bochner integrable functions $f: \Omega \to X$ with respect to $\mu$, again separating the $p = \infty$ case to mean the essential supremum. In this setting, we call $X$ the \emph{codomain space} of $L_X^p(\Omega, \,\mu)$. In particular, if $X = \R$, then we recover the set of $p$-th power Lebesgue integrable functions $L_\R^p(\Omega, \, \mu) = L^p(\Omega, \, \mu)$.

We write $L^{p(\cdot)}(0, 1)$ to denote a Nakano space, where $p(\cdot): (0, 1) \to[ 1, 2]$ is variable exponent function that is measurable. Similarly, we write $W^{k, p(\cdot)}(0, 1)$ to denote a Sobolev space, where $k$ is a nonnegative integer and $p(\cdot): (0, 1) \to[ 1, 2]$ is variable exponent function that is measurable. In particular, if $k = 0$, then these two spaces coincide.

Throughout this paper, we assume the standard Luxemburg norm
\[
\lVert f \rVert_{p(\cdot)} = \inf \left\{\lambda > 0 : \int_0^1 \left|\frac{f(t)}{\lambda}\right|^{p(t)} \, dt \leq 1\right\}
\]
for the Nakano spaces $L^{p(\cdot)}(0, 1)$, and the norm
\begin{align} \label{eq: sobolev norm}
\lVert f \rVert_{k, \, p(\cdot)} = \sum_{j=0}^k \left\lVert f^{(j)} \right\rVert_{p(\cdot)} 
\end{align}
for the Sobolev spaces $W^{k, \, p(\cdot)}(0, 1)$, where we interpret the derivatives as weak derivatives. We remark that although there are several topologically equivalent norms for Sobolev spaces, our specific choice will allow to give an elegant partial answer to Problem 4 as in Theorem \ref{thm: sobolev nakano}.

Finally, let $p_0$ represent the constant
\[
p_0 = \frac{\log 3}{\log 3 - \log 2} \approx 2.7095\cdots.
\]

\section{Preliminaries} \label{sec: preliminaries}

\subsection{\texorpdfstring{The Hornich-Hlawka property of $\ell^p(N)$ spaces}{The Hornich-Hlawka property of lp(N) spaces}} \label{subsec: lp(N) summary}

First, we establish the current standings of which $\ell^p(N)$ spaces are known to be Hornich-Hlawka, known not to be Hornich-Hlawka, or unknown. Table \ref{table: table} summarizes these standings.

\begin{center}
\begin{table}[htbp] 
\caption{Landscape of $\ell^p(N)$ spaces with regards to the Hornich-Hlawka property. The top row (Corollary \ref{cor: n >= 2}) and leftmost column (Corollary \ref{cor: p <= 2}) both follow from $L^1$-embeddability (Corollary \ref{cor: l1 space embedding HH}); the bottom right cell is due to Corollary \ref{cor: bad regime}.}\label{table: table}
\begin{tabular}{|c|c|c|c|}
\hline
 & $1 \leq p \leq 2$ & $2 < p \leq (\log3) /(\log3 - \log2)$ &  $(\log3) /(\log3 - \log2) < p \leq \infty$\\
\hline
$N \leq 2$ & \cellcolor{green!30}Hornich-Hlawka & \cellcolor{green!30}Hornich-Hlawka & \cellcolor{green!30}Hornich-Hlawka \\
\hline
$N \geq 3$ & \cellcolor{green!30}Hornich-Hlawka & \cellcolor{gray!30}Unknown & \cellcolor{red!30}not Hornich-Hlawka \\
\hline
\end{tabular}
\end{table}
\end{center}

We will survey the known cases (the green and red cells of Table \ref{table: table}). The organization and exposition will closely mirror those of \cite{marinescu2025survey}.

The key idea behind the $\ell^p(N)$ spaces that are known to be Hornich-Hlawka is that they can be embedded into an $L^1$ space, which is itself Hornich-Hlawka. This is formally described in Proposition \ref{prop: bochner hh} and Corollary \ref{cor: l1 space embedding HH}.

\begin{proposition} \cite[Lemma~1]{marinescu2025survey} \label{prop: bochner hh} 
If $X$ is a Hornich-Hlawka space, then every space $L^1_X(\Omega, \, \mu)$ is Hornich-Hlawka. In particular, every space $L^1(\Omega, \, \mu)$ is Hornich-Hlawka.
\end{proposition}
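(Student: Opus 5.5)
The plan is to prove the inequality pointwise in $\omega$ and then integrate; linearity and monotonicity of the Bochner integral carry it from $X$ to $L^1_X(\Omega,\mu)$. Fix $f, g, h \in L^1_X(\Omega,\mu)$. Sums of Bochner integrable functions are Bochner integrable, so $f+g$, $g+h$, $h+f$ and $f+g+h$ all lie in $L^1_X(\Omega,\mu)$. The scalar functions $\omega \mapsto \lVert f(\omega)\rVert_X$, $\lVert f(\omega)+g(\omega)\rVert_X$, and so on are therefore measurable and integrable.

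For each fixed $\omega\in\Omega$, the vectors $x=f(\omega)$, $y=g(\omega)$, $z=h(\omega)$ lie in $X$. Since $X$ is Hornich-Hlawka, \eqref{eq: hh inequality} gives
\[
\lVert f(\omega)\rVert + \lVert g(\omega)\rVert + \lVert h(\omega)\rVert + \lVert (f+g+h)(\omega)\rVert \geq \lVert (f+g)(\omega)\rVert + \lVert (g+h)(\omega)\rVert + \lVert (h+f)(\omega)\rVert.
\]
If we work with equivalence classes, this holds for $\mu$-almost every $\omega$, which is all we need. Integrate both sides over $\Omega$ with respect to $\mu$. Every term is finite, so we may split the integrals of the sums into sums of integrals. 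Using $\lVert u\rVert_{L^1_X}=\int_\Omega \lVert u(\omega)\rVert_X\,d\mu(\omega)$, the result is exactly \eqref{eq: hh inequality} in $L^1_X(\Omega,\mu)$.

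For the ``in particular'' statement, take $X=\R$ with the absolute value. It suffices to check that $\R$ is Hornich-Hlawka, i.e. $|a|+|b|+|c|+|a+b+c|\geq |a+b|+|b+c|+|c+a|$ for all reals $a,b,c$. Alternatively, note that $\R$ is an inner product space and appeal to Hlawka's argument recalled in the introduction. For a direct check, the inequality is symmetric and invariant under $(a,b,c)\mapsto(-a,-b,-c)$, so we may assume at least two of the numbers are nonnegative, say $a,b\geq 0$.
\begin{itemize}
\item If $c\geq 0$, both sides equal $2(a+b+c)$.
\item If $c<0$, then $|a+b|=a+b$. By the triangle inequality, $|b+c|+|c+a| \leq |a+b+c| + |c| + a + b$: write $|b+c|\le\ldots$ and split according to the sign of $a+b+c$. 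This gives the claim.
\end{itemize}
One can also do a short case analysis on the position of $-c$ relative to $a$, $b$ and $a+b$. The piecewise linear function on each side is then compared on finitely many intervals.

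There is no serious obstacle. The only points to handle are the measurability and integrability of the norm functions, and the fact that the pointwise inequality need only hold almost everywhere. Both are standard properties of Bochner spaces.
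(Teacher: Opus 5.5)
Your main argument is correct. The paper does not reprove this proposition; it only cites Marinescu--Niculescu, and your pointwise-then-integrate proof is the standard one: apply the Hornich-Hlawka inequality in $X$ at each $\omega$, note that the seven norm functions are measurable and integrable, and integrate. Your inner-product route to the scalar case is also fine. $\R$ is an inner product space, so Hlawka's argument applies, and $L^1(\Omega,\mu)=L^1_{\R}(\Omega,\mu)$.

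The ``direct check'' for $\R$, however, does not work as written. With $a,b\ge 0$, the term $|a|+|b|$ on the left and the term $|a+b|$ on the right both equal $a+b$ and cancel. So what you must show is $|b+c|+|c+a|\le |c|+|a+b+c|$.

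The inequality you display has an extra $a+b$ on its right-hand side. Substituting it only gives the Hornich-Hlawka inequality up to an additive error of $a+b$, so it does not ``give the claim.'' Moreover, the needed inequality is not a single triangle inequality.

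It follows from convexity of $t\mapsto|t|$. If $s\le x,y\le t$ and $x+y=s+t$, then $\varphi(x)+\varphi(y)\le\varphi(s)+\varphi(t)$ for any convex $\varphi$. Apply this with $s=c$, $t=a+b+c$, $x=b+c$, $y=c+a$; the hypotheses hold because $a,b\ge 0$. This works for every real $c$, so no case split on the sign of $c$ is needed.

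Alternatively, for $c<0$, bound each of the four expressions $\pm(b+c)\pm(c+a)$ by $|c|+|a+b+c|$ separately in the cases $a+b+c\ge 0$ and $a+b+c<0$. Either repair this step or drop it in favor of the inner-product argument.
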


\begin{corollary}\label{cor: l1 space embedding HH}
Every space that embeds linearly and isometrically into an $L^1$ space is Hornich-Hlawka.
\end{corollary}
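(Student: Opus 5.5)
The plan is to deduce the corollary directly from Proposition \ref{prop: bochner hh}, using only that the Hornich-Hlawka inequality \eqref{eq: hh inequality} is stated purely in terms of norms of the vectors $x, y, z$ and their sums. Such an inequality is inherited by any space that sits isometrically inside a space where it holds.

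Let $Y$ be a normed space and $T: Y \to L^1(\Omega, \, \mu)$ a linear isometry, so $\lVert Ty \rVert_{L^1} = \lVert y \rVert_Y$ for all $y \in Y$. First, by Proposition \ref{prop: bochner hh} applied with $X = \R$, the space $L^1(\Omega, \, \mu)$ is Hornich-Hlawka. Here we need that $\R$, with the absolute value, is Hornich-Hlawka, which is the scalar inequality
\[
|a| + |b| + |c| + |a+b+c| \geq |a+b| + |b+c| + |c+a|.
\]
This is also the one-dimensional case of the Hilbert-space Hornich-Hlawka inequality proved in the introduction. The proposition's ``in particular'' clause already records this.

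Next, fix $x, y, z \in Y$ and apply \eqref{eq: hh inequality} in $L^1$ to $Tx, Ty, Tz$. By linearity, $T(x+y) = Tx + Ty$, and similarly for the other pairwise sums and for $x+y+z$. Hence every norm appearing in the $L^1$ inequality equals $\lVert T(\cdot) \rVert_{L^1}$ of the corresponding expression in $Y$. By the isometry property, each of these equals the $Y$-norm of that expression. The $L^1$ inequality therefore becomes exactly the Hornich-Hlawka inequality for $x, y, z$ in $Y$, so $Y$ is Hornich-Hlawka.

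There is no genuine obstacle here; the only point needing care is that both linearity and isometry of the embedding are used, since linearity is what lets the sums inside the norms commute with $T$. If the word ``embeds'' were meant to allow an isometric image of a subspace, the same argument applies verbatim to the restriction of $T$.
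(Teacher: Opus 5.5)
Your proposal is correct and matches the paper's intended (unwritten) argument. The corollary is stated right after Proposition \ref{prop: bochner hh} as an immediate consequence of its ``in particular'' clause, and it uses exactly the fact you spell out: the inequality \eqref{eq: hh inequality} involves only norms of linear combinations of $x, y, z$, so it passes through a linear isometry.
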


\begin{remark} \label{rem: false converse}
The converse to Corollary \ref{cor: l1 space embedding HH} is false; Neyman \cite[Remark~3.5]{neyman1984representation} showed that there is no finite list of inequalities that characterizes embeddability in $L^1(0, 1)$. 
So we cannot conclude that a space that is not linearly and isometrically embeddable into an $L^1$ space is not Hornich-Hlawka.
\end{remark}

Corollary \ref{cor: l1 space embedding HH} together with the following proposition allows us to verify the Hornich-Hlawka property of some $\ell^p(N)$ spaces.

\begin{proposition} \cites[Theorem~1]{ferguson1962cauchy}[Theorem~2]{herz1963negativedefinite}[Theorem~3]{lindenstrauss1964extension} If $X$ is a two-dimensional Banach space, then $X$ is a Hornich-Hlawka space.
\end{proposition}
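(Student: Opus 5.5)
The plan is to derive the statement from Corollary \ref{cor: l1 space embedding HH}. I will show that every two-dimensional normed space $X$ embeds linearly and isometrically into $L^1$ of some measure space. The route is the classical representation of a planar norm as an integral of absolute values of linear functionals, i.e.\ $X^*$ has a zonoid unit ball.

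\textbf{Step 1: reduce to smooth norms.} First assume the norm on $X=\R^2$ is smooth, with a unit ball $K$ whose boundary is $C^2$ and has strictly positive curvature. The general case then follows by approximation. Any norm is a uniform limit on compact sets of such norms, for instance by smoothing the unit ball. The inequality \eqref{eq: hh inequality} is a closed condition on finitely many norm values, so it passes to the limit.

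\textbf{Step 2: represent the norm.} I write $\lVert x\rVert = h_{K^\circ}(x)$, the support function of the polar body $K^\circ$, which is again a symmetric planar convex body. Then I claim that there is a nonnegative even measure $\mu$ on the circle $S^1$ with
\[
\lVert x\rVert = \int_{S^1} |\langle x,u\rangle|\, d\mu(u).
\]
In the plane every centrally symmetric convex body is a zonoid: a symmetric polygon is a Minkowski sum of segments, and general bodies are limits of such polygons. Concretely, for smooth $K^\circ$ I would take $d\mu(u)=\tfrac14\rho(u^\perp)\,du$, where $\rho$ is the radius of curvature of $K^\circ$, which is nonnegative. The support function of a sum of segments $[-v_i,v_i]$ is $\sum_i |\langle x,v_i\rangle|$, so the identity can be checked on polygons and passed to the limit.

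\textbf{Step 3: conclude.} The map $x\mapsto \langle x,\cdot\rangle$ is linear from $X$ into $L^1(S^1,\mu)$, and by Step 2 it is isometric. Corollary \ref{cor: l1 space embedding HH} therefore gives that $X$ is Hornich-Hlawka. Alternatively, one can verify the inequality pointwise, since $|a|+|b|+|c|+|a+b+c|\ge |a+b|+|b+c|+|c+a|$ holds for reals (by the one-dimensional case or a direct case check), and then integrate against $\mu$.

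\textbf{Main obstacle.} The main obstacle is Step 2, namely showing that $\mu$ is a genuine nonnegative measure, which is where dimension two is used. My first approach is the polygon argument. A centrally symmetric polygon with edges $\pm e_1,\dots,\pm e_m$ equals the Minkowski sum $\sum_i \tfrac12[-e_i,e_i]$, which follows from an induction on $m$ or by matching edge vectors. I then approximate $K^\circ$ in the Hausdorff metric by symmetric polygons. The associated discrete measures have bounded total mass, since the mass is controlled by $\lVert x\rVert$ at two basis vectors, so a subsequence converges weak-$*$. Because support functions converge uniformly, the limit gives the representation.
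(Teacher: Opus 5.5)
Your proof is correct and follows the route the paper intends. The paper gives no argument of its own: it cites Ferguson, Herz and Lindenstrauss for the fact that every two-dimensional normed space embeds linearly and isometrically into an $L^1$ space (equivalently, the dual unit ball is a zonoid), and then applies Corollary~\ref{cor: l1 space embedding HH}, exactly as in your Step 3. Your polygon-to-zonogon approximation with a weak-$*$ limit of the discrete measures is a valid self-contained proof of that cited embedding, with the total mass bounded via $|u_1|+|u_2|\geq 1$. That argument works directly for any norm, so the smoothing in Step 1 and the curvature formula are not needed.
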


\begin{corollary} \label{cor: n >= 2} If $N \leq 2$, then $\ell^p(N)$ is a Hornich-Hlawka space.
\end{corollary}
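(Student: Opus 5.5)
The statement to prove is Corollary \ref{cor: n >= 2}: for $N \leq 2$, the space $\ell^p(N)$ is Hornich-Hlawka.

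The plan is to split on $N$. For $N = 2$ we apply the preceding proposition directly; for $N \leq 1$ we give a one-line computation. No embedding into $L^1$ is needed, although one could also go through Corollary \ref{cor: l1 space embedding HH}.

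\emph{Case $N = 2$.} For each $1 \leq p \leq \infty$, $\lVert \cdot \rVert_p$ is a genuine norm on $\R^2$. Hence $\ell^p(2)$ is a two-dimensional normed space, and it is complete because it is finite-dimensional, so it is a Banach space. The cited proposition (Ferguson, Herz, Lindenstrauss) says every two-dimensional Banach space is Hornich-Hlawka. This gives the claim immediately.

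\emph{Case $N = 1$.} Here $\ell^p(1) = (\R, |\cdot|)$ for every $p$, and the inequality \eqref{eq: hh inequality} becomes
\[
|x|+|y|+|z|+|x+y+z| \geq |x+y|+|y+z|+|z+x|
\]
for real numbers $x, y, z$. There are two ways to see this.
\begin{itemize}
\item[(a)] $\R$ embeds linearly and isometrically into $\R^2$, for instance as the first coordinate axis of $\ell^p(2)$. The Hornich-Hlawka property passes to subspaces, since the inequality is tested only on triples from the subspace. So the case $N = 2$ already covers $N = 1$.
\item[(b)] Directly: by symmetry under $(x,y,z) \mapsto (-x,-y,-z)$ and permutations, assume at least two of $x,y,z$ are nonnegative, say $x, y \geq 0$. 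Checking the sign of $z$ and of $x+z$, $y+z$, $x+y+z$ reduces the inequality to the triangle inequality in each case.
\end{itemize}

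\emph{Case $N = 0$.} If $N = 0$ is allowed, the space is trivial and the inequality holds with equality $0 = 0$.

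No step here is a real obstacle. The only points needing care are that the proposition is stated for Banach spaces, which holds because finite-dimensional normed spaces are complete, and the observation that the Hornich-Hlawka property is hereditary to subspaces.
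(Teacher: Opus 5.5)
Your proposal is correct and follows the paper, which obtains the corollary directly from the preceding proposition that every two-dimensional Banach space is Hornich-Hlawka. Your treatment of $N = 1$, via heredity of the property to subspaces (or by direct check), fills in a detail the paper leaves implicit.
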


\begin{proposition} \label{prop: embedding} \cites[Theorems~1, 2]{bretagnolle1965lp}[Theorem~1.1]{mcnicholl1965lp} If $1 \leq r \leq p \leq 2$ and $N \in \N$, then the spaces $\ell^p(N)$, $\ell^p$, and $L^p(0, 1)$ embed linearly and isometrically into $L^r(0, 1)$.
\end{proposition}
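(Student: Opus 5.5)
The plan is to build the embedding from symmetric $p$-stable random variables, treating the degenerate cases $r = p$ and $p = 2$ separately. Fix $1 \leq r < p < 2$; this is the main case. The first step is to produce a random variable $\theta$ on a probability space $(\Omega, \A, \mathbb{P})$ whose characteristic function is
\[
\mathbb{E}\, e^{it\theta} = e^{-|t|^p}.
\]
To show such a $\theta$ exists, I would use subordination rather than proving directly that $e^{-|t|^p}$ is positive definite. Let $G$ be a standard Gaussian, and let $S \geq 0$ be an independent positive $(p/2)$-stable variable with Laplace transform $\mathbb{E}\, e^{-sS} = e^{-s^{p/2}}$. Then $\theta = \sqrt{2S}\, G$ has the required characteristic function: conditioning on $S$ gives $\mathbb{E}\, e^{-t^2 S} = e^{-|t|^p}$. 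The existence of $S$ itself is classical, since $e^{-s^{\beta}}$ is completely monotone for $0 < \beta \leq 1$, so Bernstein's theorem applies.

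Next, take independent copies $\theta_1, \theta_2, \ldots$ of $\theta$. For any finitely supported real sequence $a$, independence gives
\[
\mathbb{E}\, e^{it\sum a_i \theta_i} = \prod_i e^{-|a_i t|^p} = e^{-\lVert a\rVert_p^p |t|^p},
\]
so $\sum_i a_i \theta_i$ has the same law as $\lVert a \rVert_p\, \theta$. A standard tail estimate shows that a symmetric $p$-stable variable has finite moments of every order strictly below $p$, so $c_r = (\mathbb{E}|\theta|^r)^{1/r}$ is finite, and it is positive because $\theta \neq 0$. Hence the map
\[
T(a) = c_r^{-1} \sum_i a_i \theta_i
\]
satisfies $\lVert T a\rVert_{L^r(\mathbb{P})} = \lVert a\rVert_p$. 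This makes $T$ a linear isometry on finitely supported sequences, and it extends by density to all of $\ell^p$; restricting to the first $N$ coordinates covers $\ell^p(N)$.

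For $L^p(0,1)$, I would replace the sequence $(\theta_i)$ by a symmetric $p$-stable random measure $Z$ on $(0,1)$ with Lebesgue control measure: it assigns independent values to disjoint sets, and $Z(A)$ has the law of $|A|^{1/p}\theta$. On a step function $f = \sum a_j \mathds{1}_{A_j}$ with disjoint $A_j$, set $I(f) = \sum a_j Z(A_j)$. The computation above gives $\lVert I(f) \rVert_{L^r} = c_r \lVert f\rVert_p$, so $c_r^{-1} I$ extends by density to an isometry on $L^p(0,1)$. The family $Z$ can be generated from countably many independent variables, for instance its values on dyadic intervals. Consequently the $\sigma$-algebra they generate is countably generated, and I can take the underlying probability space to be $(0,1)$ with Lebesgue measure: the relevant part of $(\Omega, \A, \mathbb{P})$ is a standard nonatomic space, and any atoms would only help. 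This puts every image inside $L^r(0,1)$.

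The remaining cases are direct. If $p = 2$, I would use Gaussians in place of stable variables, since they have all moments and so handle every $r \in [1,2]$. If $r = p$, the stable variable has infinite $p$-th moment and the argument above fails. Instead, $L^p(0,1)$ is trivially contained in itself, and $\ell^p$ embeds by sending the $i$-th unit vector to $2^{i/p} \mathds{1}_{(2^{-i}, 2^{1-i}]}$, which gives disjointly supported functions of norm one. I expect the main obstacle to be the existence of the stable law together with its moment bound $\mathbb{E}|\theta|^r < \infty$ for $r < p$. Subordination handles existence cleanly. For the moment bound, I would try the conditioning argument: $\mathbb{E}|\theta|^r = \mathbb{E}(2S)^{r/2} \cdot \mathbb{E}|G|^r$, and $\mathbb{E}\, S^{q} < \infty$ for $q < p/2$, which follows from the Laplace-transform identity
\[
\mathbb{E}\, S^{q} = \frac{1}{\Gamma(1 - q)} \int_0^\infty (1 - e^{-s^{p/2}})\, s^{-q-1} \, ds
\]
for $0 < q < 1$.
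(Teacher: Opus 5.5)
Your proposal is correct and is essentially the classical stable-law argument of the cited Bretagnolle--Dacunha-Castelle--Krivine paper (the paper itself gives no proof beyond the citation); it also mirrors the paper's own device in the proof of Theorem \ref{thm: p=p0 reduction}, where positive stable variables and Lemma \ref{lemma: balakrishnan} supply exactly your moment bound. There are two harmless slips: the identity should read $\mathbb{E}\, S^q = \frac{q}{\Gamma(1-q)}\int_0^\infty (1-e^{-s^{p/2}})s^{-q-1}\,ds$, and the values of $Z$ on nested dyadic intervals are not independent, so existence of $Z$ should instead come from Kolmogorov extension on the dyadic tree (consistency holds since two independent copies of $2^{-(n+1)/p}\theta$ sum to a copy of $2^{-n/p}\theta$) or from a symmetric $p$-stable L\'evy process.
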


The following corollary follows from Propositions \ref{prop: bochner hh} and \ref{prop: embedding}.

\begin{corollary} \label{cor: p <= 2} If $1 \leq p \leq 2$, then $\ell^p(N)$, $\ell^p$, and $L^p(0, 1)$ are all Hornich-Hlawka spaces.
\end{corollary}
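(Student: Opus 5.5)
The statement to prove is Corollary \ref{cor: p <= 2}: if $1 \leq p \leq 2$, then $\ell^p(N)$, $\ell^p$ and $L^p(0,1)$ are Hornich-Hlawka. The plan is to chain Proposition \ref{prop: embedding} (with $r = 1$) and Corollary \ref{cor: l1 space embedding HH}, the latter resting on Proposition \ref{prop: bochner hh}.

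\textbf{Step 1: embed into $L^1(0,1)$.} Fix $1 \leq p \leq 2$. Apply Proposition \ref{prop: embedding} with $r = 1$; the hypothesis $1 \leq r \leq p \leq 2$ holds. This gives, for each of $Y = \ell^p(N)$, $\ell^p$, $L^p(0,1)$, a linear isometric embedding $T: Y \to L^1(0,1)$.

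\textbf{Step 2: $L^1(0,1)$ is Hornich-Hlawka.} By Proposition \ref{prop: bochner hh} with codomain space $X = \R$, the space $L^1(0,1)$ is Hornich-Hlawka. Here $\R$ is trivially Hornich-Hlawka (for instance, it is one-dimensional and embeds in a two-dimensional space).

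\textbf{Step 3: transfer the inequality.} For $x, y, z \in Y$, linearity gives $T(x+y) = Tx + Ty$, $T(x+y+z) = Tx + Ty + Tz$, and so on. Isometry gives $\lVert Tu \rVert_{L^1} = \lVert u \rVert_Y$ for every $u \in Y$. Now apply the Hornich-Hlawka inequality \eqref{eq: hh inequality} in $L^1(0,1)$ to the images $Tx, Ty, Tz$. Rewriting each of the seven norms through $T$ yields exactly \eqref{eq: hh inequality} in $Y$. This is precisely the content of Corollary \ref{cor: l1 space embedding HH}.

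There is no real obstacle here. The only point needing care is the endpoint $p = 1$, where the embedding is the identity (or the trivial embedding of $\ell^1(N)$ and $\ell^1$ into $L^1$), so that case is immediate in any event. The nontrivial analytic input, namely the $p$-stable embedding for $1 < p \leq 2$, is already packaged in Proposition \ref{prop: embedding}.
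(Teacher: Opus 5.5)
Your proposal is correct and follows the same route as the paper, which deduces the corollary directly from Proposition \ref{prop: embedding} with $r = 1$ together with Proposition \ref{prop: bochner hh} (packaged as Corollary \ref{cor: l1 space embedding HH}). The extra justification that $\R$ is Hornich-Hlawka is unnecessary, since Proposition \ref{prop: bochner hh} already states that every $L^1(\Omega, \, \mu)$ is Hornich-Hlawka, but it does no harm.
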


We also have some elementary negative results. We may verify the following by a straightforward calculation.

\begin{proposition} \cite[Proposition~2]{witsenhausen1973zonoid} \label{prop: bad spaces}
If $p_0 < p \leq \infty$, then $\ell^p(3)$ is not a Hornich-Hlawka space. In particular,
\[
x = (-1, \, 1, \, 1), \quad y = (1, \, -1, \, 1), \quad z = (1, \, 1, \, -1)
\]
provides a counterexample to the Hornich-Hlawka inequality \eqref{eq: hh inequality}.
\end{proposition}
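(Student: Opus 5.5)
The plan is to substitute the given triple into both sides of \eqref{eq: hh inequality} and compare. Because the vectors are permutations of one another up to sign, every norm involved reduces to a simple closed form, so everything comes down to one scalar inequality in $p$.

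\textbf{Computing both sides.} Each of $x, y, z$ has all coordinates of absolute value $1$, so $\lVert x \rVert_p = \lVert y \rVert_p = \lVert z \rVert_p = 3^{1/p}$ for $p<\infty$, and all equal $1$ when $p=\infty$. For the sum, $x+y+z = (1,1,1)$, so $\lVert x+y+z \rVert_p = 3^{1/p}$. For the pairwise sums, $x+y = (0,0,2)$, $y+z = (2,0,0)$ and $z+x = (0,2,0)$, each of norm $2$ for every $p$. The left-hand side of \eqref{eq: hh inequality} is therefore $4\cdot 3^{1/p}$, and the right-hand side is $6$.

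\textbf{Reducing to a scalar inequality.} The inequality fails precisely when $4 \cdot 3^{1/p} < 6$, that is, when $3^{1/p} < 3/2$. Taking logarithms, this is $\frac{\log 3}{p} < \log 3 - \log 2$, which rearranges to $p > \frac{\log 3}{\log 3 - \log 2} = p_0$. The function $p \mapsto 3^{1/p}$ is strictly decreasing, so this condition holds for every $p$ in $(p_0,\infty)$. For $p=\infty$ we check directly that $4 < 6$.

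Hence for every $p_0 < p \le \infty$ the given triple violates \eqref{eq: hh inequality}, so $\ell^p(3)$ is not Hornich-Hlawka. Nothing here is a real obstacle; the only care needed is handling the endpoint $p=\infty$ separately and noting that at $p=p_0$ the two sides are equal, which is why $p_0$ is the threshold.
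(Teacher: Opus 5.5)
Your proposal is correct and is exactly the straightforward calculation the paper invokes (the paper gives no further proof). The pairwise sums have norm $2$ and the other four terms have norm $3^{1/p}$, so the inequality fails exactly when $4\cdot 3^{1/p} < 6$, i.e. $p > p_0$, and the endpoint $p=\infty$ is checked directly as $4 < 6$.
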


The following proposition allows us to extend the results of Proposition \ref{prop: bad spaces} to $\ell^p(N)$ spaces where $N \geq 3$.

\begin{proposition} \label{prop: zero padding}
Let $N \geq 3$. If $\ell^p(3)$ is not Hornich-Hlawka, then $\ell^p(N)$ is not Hornich-Hlawka.
\end{proposition}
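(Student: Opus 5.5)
We argue by contraposition: a counterexample in $\ell^p(3)$ should produce one in $\ell^p(N)$. The tool is the zero-padding isometric embedding $\iota: \ell^p(3) \to \ell^p(N)$,
\[
\iota(a_1, a_2, a_3) = (a_1, a_2, a_3, 0, \ldots, 0),
\]
which places a vector in the first three coordinates and pads with $N-3$ zeros.

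First, we check that $\iota$ is linear and isometric. Linearity is clear. For $1 \le p < \infty$ the zero coordinates add nothing to $\sum_i |a_i|^p$. For $p = \infty$ they do not change the maximum of $|a_i|$, because $0 \le |a_i|$. Hence $\lVert \iota(a) \rVert_p = \lVert a \rVert_p$ for every $a \in \R^3$.

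Next, suppose $\ell^p(3)$ is not Hornich-Hlawka. Then there exist $x, y, z \in \R^3$ for which \eqref{eq: hh inequality} fails, that is, the left-hand side is strictly smaller than the right-hand side. Since $\iota$ is linear, it commutes with every sum appearing in the inequality: $\iota(x+y) = \iota(x) + \iota(y)$, and likewise for $y+z$, $z+x$ and $x+y+z$. So each of the seven norms in \eqref{eq: hh inequality} for the triple $\iota(x), \iota(y), \iota(z)$ equals the corresponding norm for $x, y, z$. The strict failure therefore transfers to $\ell^p(N)$, and $\ell^p(N)$ is not Hornich-Hlawka.

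No step is a real obstacle; the only point needing care is the $p = \infty$ case of the isometry, handled above. More generally, this argument shows that the Hornich-Hlawka property passes to subspaces, and here $\ell^p(3)$ is isometric to the coordinate subspace of $\ell^p(N)$ spanned by the first three standard basis vectors.
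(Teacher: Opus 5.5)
Your proof is correct and follows the paper's argument: you zero-pad a violating triple from $\ell^p(3)$ into $\ell^p(N)$ and observe that all seven norms in \eqref{eq: hh inequality} are preserved, so the strict violation transfers. Your explicit check of the $p=\infty$ case is a small detail the paper leaves implicit, and the argument is really direct rather than by contraposition, though that is only a matter of wording.
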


\begin{proof}
Assume $x, y, z \in \ell^p(3)$ violate the Hornich-Hlawka inequality \eqref{eq: hh inequality}. Then the zero-padded tuple
\[
\hat{x} = (x, \underbrace{0, \ldots, 0}_{N-3}) \in \ell^p(N)
\]
satisfies $\lVert \hat{x} \rVert_{\ell^p(N)} = \lVert x \rVert_{\ell^p(3)}$. By padding $y$ and $z$ with zeroes analogously, we see that the tuples
\[
(x, \underbrace{0, \ldots, 0}_{N-3}), \, (y, \underbrace{0, \ldots, 0}_{N-3}), \, (z, \underbrace{0, \ldots, 0}_{N-3}) \in \ell^p(N)
\]
also violate the Hornich-Hlawka inequality \eqref{eq: hh inequality}.
\end{proof}

As a consequence of Propositions \ref{prop: bad spaces} and \ref{prop: zero padding}, we obtain the following corollary.

\begin{corollary} \label{cor: bad regime}
If $p_0 < p \leq \infty$ and $N \geq 3$, then $\ell^p(N)$ is not a Hornich-Hlawka space. 
\end{corollary}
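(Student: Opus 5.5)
This is immediate from Propositions \ref{prop: bad spaces} and \ref{prop: zero padding}; the plan is to compose them.

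Fix $p$ with $p_0 < p \leq \infty$ and an integer $N \geq 3$. By Proposition \ref{prop: bad spaces}, $\ell^p(3)$ is not Hornich-Hlawka. For completeness, I would check this with the explicit triple $x = (-1,1,1)$, $y = (1,-1,1)$, $z = (1,1,-1)$.

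Each of $x, y, z$ has norm $3^{1/p}$, and $x+y+z = (1,1,1)$ also has norm $3^{1/p}$. Each pairwise sum, such as $x+y = (0,0,2)$, has norm $2$. So the left side of \eqref{eq: hh inequality} is $4\cdot 3^{1/p}$ and the right side is $6$. The inequality fails exactly when $3^{1/p} < 3/2$, which is equivalent to $\frac{1}{p}\log 3 < \log 3 - \log 2$, that is, $p > p_0$. For $p = \infty$ the left side is $4 < 6$, so the triple is again a counterexample.

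Now apply Proposition \ref{prop: zero padding} with this $N \geq 3$. Padding $x, y, z$ with $N-3$ zeros preserves every norm appearing in \eqref{eq: hh inequality}, so the padded triple violates the inequality in $\ell^p(N)$. Hence $\ell^p(N)$ is not Hornich-Hlawka.

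Nothing here is a real obstacle. The only step needing any care is the equivalence $3^{1/p} < 3/2 \iff p > p_0$, which is elementary monotonicity of $p \mapsto 3^{1/p}$, together with the separate handling of $p = \infty$.
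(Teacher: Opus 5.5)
Your proposal is correct and matches the paper's argument: the corollary is stated there as a direct consequence of Proposition \ref{prop: bad spaces} (the explicit triple in $\ell^p(3)$) composed with Proposition \ref{prop: zero padding}. Your verification of the triple is also right: the inequality fails exactly when $4\cdot 3^{1/p} < 6$, i.e.\ $p > p_0$, and for $p=\infty$ because $4<6$.
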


\subsection{An identity in fractional calculus} The proof of Theorem \ref{thm: p=p0 reduction} requires the following lemma.

\begin{lemma} \label{lemma: balakrishnan}
Let $0 < \beta < 1$ and $x \geq 0$. The following identity holds:
\begin{align}
x^\beta = \frac{\beta}{\Gamma(1 - \beta)}\int_0^\infty (1 - e^{-tx})t^{-\beta-1}\,dt. \label{eq: balakrishnan}
\end{align}
\end{lemma}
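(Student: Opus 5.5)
The plan is to reduce the identity to the Gamma integral by a change of variables and an integration by parts. For $x = 0$ both sides vanish, since the integrand is identically zero, so I assume $x > 0$.

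First I would check that the integral converges. Near $t = 0$ we have $1 - e^{-tx} \le tx$, so the integrand is bounded by $x\,t^{-\beta}$, which is integrable at $0$ because $\beta < 1$. Near $\infty$ the integrand is bounded by $t^{-\beta-1}$, which is integrable there because $\beta > 0$.

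Next I substitute $s = tx$. Then $t^{-\beta-1}\,dt = x^{\beta} s^{-\beta-1}\,ds$, and
\[
\int_0^\infty (1 - e^{-tx})t^{-\beta-1}\,dt = x^\beta \int_0^\infty (1 - e^{-s}) s^{-\beta-1}\,ds .
\]
It therefore suffices to show that $C_\beta := \int_0^\infty (1-e^{-s})s^{-\beta-1}\,ds$ equals $\Gamma(1-\beta)/\beta$.

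To evaluate $C_\beta$, I integrate by parts with $u = 1 - e^{-s}$ and $dv = s^{-\beta-1}\,ds$, so that $v = -s^{-\beta}/\beta$. The boundary term is $-(1-e^{-s})s^{-\beta}/\beta$. At $s \to 0$ it behaves like $s^{1-\beta} \to 0$, and at $s \to \infty$ it behaves like $s^{-\beta} \to 0$, so it vanishes at both ends. The remaining term is
\[
\frac{1}{\beta}\int_0^\infty e^{-s}s^{-\beta}\,ds = \frac{\Gamma(1-\beta)}{\beta},
\]
and this integral converges because $1 - \beta > 0$. Multiplying by $\beta/\Gamma(1-\beta)$ then gives exactly $x^\beta$.

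The only delicate point is justifying the boundary terms and the convergence at $0$. Both rest on $0<\beta<1$ together with the bound $1-e^{-s}\le \min(s,1)$.
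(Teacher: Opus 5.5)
Your proposal is correct and follows essentially the same route as the paper: integrate by parts, kill the boundary terms using $0<\beta<1$, and reduce to $\Gamma(1-\beta)$ via $s=tx$. The only differences are cosmetic: you substitute before integrating by parts rather than after, and you add an explicit convergence check that the paper leaves implicit.
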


The identity appears (in various forms) in \cites{weyl1917differential, marchaud1927differences, schoenberg1938metric}. Balakrishnan's formula \cite{balakrishnan1960fractional} replaces the scalar $x$ in \eqref{eq: balakrishnan} with a specific closed linear operator; modern expositions such as \cite{handbookfractional} motivate Balakrishnan's formula by first explicitly stating its scalar form \eqref{eq: balakrishnan}.

Lemma \ref{lemma: balakrishnan} admits an elementary, self-contained proof, which we provide here.

\begin{proof}
If $x = 0$, then both sides of \eqref{eq: balakrishnan} are zero, so we assume $x > 0$. Let
\[
I = \int_0^\infty (1 - e^{-tx})t^{-\beta-1}\,dt.
\]
Integration by parts yields
\begin{align} \label{eq: int by parts}
I = \left[(1 - e^{-tx})\left(-\frac{t^{-\beta}}\beta\right)\right]_{t=0}^{t=\infty} - \int_0^\infty xe^{-tx}\left(-\frac{t^{-\beta}}\beta\right)\,dt.
\end{align}
Evaluating the boundary term, as $t \to \infty$ we have
\[
(1 - e^{-tx})\left(-\frac{t^{-\beta}}\beta\right) \longrightarrow 0
\]
and as $t \to 0$ we have
\[
(1 - e^{-tx})\left(-\frac{t^{-\beta}}\beta\right) = \left(tx + o(t)\right)\left(-\frac{t^{-\beta}}\beta\right) = -\frac{x}\beta t^{1-\beta} + o(t^{1-\beta}) \longrightarrow 0.
\]
Hence, \eqref{eq: int by parts} simplifies to
\[
I = \frac{x}{\beta}\int_0^\infty t^{-\beta}e^{-tx}\,dt
\]
and the substitution $s = tx$ yields
\begin{align*}
I = \frac{x}{\beta}\int_0^\infty \left(\frac{s}{x}\right)^{-\beta}e^{-s}\frac1x\,ds = \frac{x^\beta}{\beta} \int_0^\infty s^{-\beta}e^{-s}\, ds = \frac{x^\beta}{\beta}\Gamma(1 - \beta).
\end{align*}
Substituting our definition of $I$, this rearranges to the promised identity.
\end{proof}

\subsection{A duality lemma}
The following lemma is standard in functional analysis and simplifies our proof of Theorem \ref{thm: four ball equivalence}.

\begin{lemma} \label{lemma: bounded below operators} Let $X, Y$ be Banach spaces, let $T: X \to Y$ be a bounded linear operator, and let $C$ be a positive constant. The following are equivalent:

\begin{enumerate}
    \item For all $x \in X$, we have
    \[\lVert Tx \rVert_Y \geq C \lVert x \rVert_X;\]
    \item For all $x^* \in X^*$, there is $y^* \in Y^*$ such that $T^*y^* = x^*$ and 
    \[
    \lVert y^* \rVert_{Y^*} \leq \frac1C \lVert x^* \rVert_{X^*}.
    \]
\end{enumerate}
\end{lemma}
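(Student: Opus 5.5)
The plan is to prove the two implications separately. The direction (2)$\Rightarrow$(1) is elementary; the direction (1)$\Rightarrow$(2) goes through Hahn--Banach.

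For (2)$\Rightarrow$(1), fix $x \in X$. By Hahn--Banach, choose a norming functional $x^* \in X^*$ with $\lVert x^* \rVert_{X^*} = 1$ and $x^*(x) = \lVert x \rVert_X$ (if $x = 0$ there is nothing to prove). By (2), there is $y^* \in Y^*$ with $T^* y^* = x^*$ and $\lVert y^* \rVert_{Y^*} \leq 1/C$. Then
\[
\lVert x \rVert_X = x^*(x) = (T^*y^*)(x) = y^*(Tx) \leq \lVert y^* \rVert_{Y^*} \lVert Tx \rVert_Y \leq \tfrac1C \lVert Tx \rVert_Y,
\]
which is exactly (1).

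For (1)$\Rightarrow$(2), fix $x^* \in X^*$. By (1), $T$ is injective, so on the range $T(X) \subseteq Y$ we may define $\phi(Tx) = x^*(x)$. This $\phi$ is well defined and linear, and it satisfies
\[
|\phi(Tx)| \leq \lVert x^* \rVert_{X^*} \lVert x \rVert_X \leq \tfrac1C \lVert x^* \rVert_{X^*} \lVert Tx \rVert_Y .
\]
Hence $\phi$ is a bounded functional on the subspace $T(X)$ with norm at most $\lVert x^* \rVert_{X^*}/C$. No closedness of $T(X)$ is needed for this.

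By the Hahn--Banach extension theorem, $\phi$ extends to some $y^* \in Y^*$ with the same norm bound. Then $(T^*y^*)(x) = y^*(Tx) = \phi(Tx) = x^*(x)$ for every $x \in X$, so $T^*y^* = x^*$, and (2) follows.

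The only step requiring care is checking that $\phi$ is well defined and bounded with the correct constant. This is where hypothesis (1) enters, through the injectivity of $T$ and the lower bound $\lVert Tx \rVert_Y \geq C\lVert x \rVert_X$. Everything else is a direct application of Hahn--Banach, used once to produce a norming functional and once to extend a functional.
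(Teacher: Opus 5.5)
Your proof is correct and is essentially the standard argument the paper defers to, namely the Hahn--Banach extension step in the proof of Rudin's Theorem 4.14. You define $Tx \mapsto x^*(x)$ on $T(X)$, bound it by $\frac{1}{C}\lVert x^* \rVert_{X^*}$ using (1), and extend, while the converse follows from a norming functional exactly as you wrote. The one difference is that your lower bound directly supplies the constant Rudin gets from the open mapping theorem, so your argument does not need completeness of $X$ or $Y$.
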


We omit its proof and refer to \cite[Theorem~4.14]{rudin1991functional} and its proof for a detailed discussion.

\section{\texorpdfstring{Problem 1: The Hornich-Hlawka property of $\ell^p(N)$}{Problem 1: The Hornich-Hlawka property of lp(3)}} \label{sec: problem 1}

We first restate Problem 1 as a conjecture according to \cite{marinescu2025survey}.

\begin{conjecture} \label{conj: lp0(3) not hh}
The space $\ell^p(3)$ is not Hornich-Hlawka for all $2 < p \leq p_0$.
\end{conjecture}

Combined with Proposition \ref{prop: zero padding}, Conjecture \ref{conj: lp0(3) not hh} would settle the shaded area in Table \ref{table: table} negatively.

\subsection{\texorpdfstring{Reduction to three-dimensional $\ell^p$ spaces}{Reduction to three-dimensional lp spaces}}
In this subsection, our aim is to show Theorem \ref{thm: all lpn}, which states the converse of Proposition \ref{prop: zero padding}. Therefore, the analysis of the shaded area in Table \ref{table: table} can always be reduced to merely inspecting what happens when $N = 3$ for every fixed $p$ (Corollary \ref{cor: n >= 3 hh}).

We first state and prove an equivalent inequality that arises from \eqref{eq: hh inequality} after a change of variables.

\begin{proposition}
Let $X$ be a Banach space with norm $\lVert \cdot \rVert$. Then $X$ is a Hornich-Hlawka space (i.e. \eqref{eq: hh inequality} is satisfied for all $x, y, z \in X$) if and only if
\begin{align}
\lVert u + v + w \rVert + \lVert u + v - w \rVert + \lVert u - v + w \rVert  + \lVert - u + v + w \rVert \geq 2(\lVert u \rVert + \lVert v \rVert + \lVert w \rVert) \label{eq: uvw rewrite}
\end{align}
is satisfied for all $u, v, w \in X$.
\end{proposition}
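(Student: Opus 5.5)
The plan is an invertible linear change of variables that carries the pairwise sums in \eqref{eq: hh inequality} to the single vectors in \eqref{eq: uvw rewrite}. Given $x, y, z \in X$, set
\[
u = \tfrac{x+y}{2}, \quad v = \tfrac{y+z}{2}, \quad w = \tfrac{z+x}{2}.
\]
This map $(x,y,z) \mapsto (u,v,w)$ is a linear bijection of $X^3$. Its inverse is
\[
x = u - v + w, \quad y = u + v - w, \quad z = -u + v + w.
\]
For instance, $u - v + w = \tfrac{x+y-y-z+z+x}{2} = x$, and the other two follow by the cyclic symmetry $x \to y \to z$, $u \to v \to w$.

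Under this substitution the terms match up as follows.
\begin{itemize}
\item The three pairwise sums become $x + y = 2u$, $y + z = 2v$ and $z + x = 2w$.
\item The total sum becomes $x + y + z = u + v + w$.
\end{itemize}
So the left side of \eqref{eq: hh inequality} equals
\[
\lVert u - v + w \rVert + \lVert u + v - w \rVert + \lVert -u + v + w \rVert + \lVert u + v + w \rVert .
\]
By homogeneity of the norm, the right side equals $2(\lVert u \rVert + \lVert v \rVert + \lVert w \rVert)$. Hence \eqref{eq: hh inequality} for the triple $(x,y,z)$ is literally the same inequality as \eqref{eq: uvw rewrite} for the triple $(u,v,w)$.

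Because the correspondence is bijective, the two statements are equivalent.
\begin{itemize}
\item If $X$ is Hornich-Hlawka, take arbitrary $u, v, w$ and define $x, y, z$ by the inverse formulas; applying \eqref{eq: hh inequality} to this triple gives \eqref{eq: uvw rewrite}.
\item Conversely, if \eqref{eq: uvw rewrite} holds for all $u, v, w$, take arbitrary $x, y, z$ and define $u, v, w$ as above; applying \eqref{eq: uvw rewrite} gives \eqref{eq: hh inequality}.
\end{itemize}

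There is no real obstacle here. The only care needed is the sign bookkeeping when verifying the inverse formulas and matching the four terms on the left-hand sides, which is routine.
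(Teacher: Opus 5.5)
Your proof is correct and is essentially the paper's argument: an invertible linear change of variables under which the seven norm terms of \eqref{eq: hh inequality} correspond exactly to those of \eqref{eq: uvw rewrite}. The only difference is cosmetic: the paper substitutes $u = x+y$, $v = y+z$, $w = z+x$ and then divides by $2$, whereas your factor $\tfrac12$ makes the two inequalities coincide term by term with no division needed.
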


\begin{proof}
We may substitute
\[
u = x + y, \quad v = y + z, \quad w = z + x,
\]
in $\eqref{eq: uvw rewrite}$ to get $\eqref{eq: hh inequality}$  after invoking homogeneity of the norm and dividing both sides by 2. Conversely, we may substitute
\[
x = \frac{u - v + w}2, \quad y = \frac{u + v - w}2, \quad z = \frac{-u + v + w}2.
\]
in $\eqref{eq: hh inequality}$ and proceed similarly to get \eqref{eq: uvw rewrite}. 
\end{proof}

In order to write \eqref{eq: uvw rewrite} more concisely, we will define the set of sign patterns
\[
\HH =\{(1, \, 1, \, 1), \,(1, \, 1, \, -1), \, (1, \, -1, \, 1), \, (-1, \, 1, \,1)\},
\]
so that \eqref{eq: uvw rewrite} can be rewritten as
\[
\sum_{\sigma \in \HH} \lVert \sigma_1 u + \sigma_2 v + \sigma_3 w \rVert \geq 2(\lVert u \rVert + \lVert v \rVert + \lVert w \rVert).
\]

In addition to the usual Hornich-Hlawka inequality \eqref{eq: hh inequality}, we will also call the equivalent inequality \eqref{eq: uvw rewrite} the Hornich-Hlawka inequality; for the remainder of this paper, we will explicitly write \eqref{eq: hh inequality} or \eqref{eq: uvw rewrite} for disambiguation when relevant.

When studying the Hornich-Hlawka property of $\ell^p(N)$ spaces, it is natural to ponder its behavior as $N \to \infty$ when $p$ is fixed. The following theorem shows that the behavior is in fact independent of $N$ so long as $N \geq 3$.

\begin{theorem} \label{thm: all lpn}
Let $1 \leq p \leq \infty$. Then $\ell^p(3)$ is Hornich-Hlawka if and only if $\ell^p(N)$ is Hornich-Hlawka for every $N \geq 3$.
\end{theorem}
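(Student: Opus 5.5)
The direction ``$\ell^p(N)$ Hornich-Hlawka for every $N \geq 3$ $\Rightarrow$ $\ell^p(3)$ Hornich-Hlawka'' is immediate: take $N = 3$, or use the contrapositive of Proposition \ref{prop: zero padding}. For the converse, I will work with the form \eqref{eq: uvw rewrite} and encode a triple $u, v, w \in \ell^p(N)$ by the $N \times 3$ matrix $M$ with columns $u, v, w$. Let $r_1, \dots, r_N \in \R^3$ be its rows. For $a \in \R^3$ we have $au+bv+cw = Ma$, and for $p < \infty$
\[
\lVert Ma \rVert_p = \Big(\sum_{i=1}^N |\langle r_i, a\rangle|^p\Big)^{1/p}.
\]
So the Hornich-Hlawka inequality for $(u,v,w)$ says that the seminorm $\Phi_M(a) = \lVert Ma \rVert_p$ on $\R^3$ satisfies
\[
\sum_{\sigma \in \HH} \Phi_M(\sigma) \geq 2\big(\Phi_M(e_1)+\Phi_M(e_2)+\Phi_M(e_3)\big).
\]
Similarly, $\ell^p(3)$ being Hornich-Hlawka means exactly this inequality holds whenever $M$ has at most three rows. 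The case $p = \infty$ will be handled separately and directly: it lies in the known-negative regime $p > p_0$, where both sides of the equivalence are false by Corollary \ref{cor: bad regime}. So I fix $p < \infty$.

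The plan is to reduce the number of rows. I write each row as $r_i = c_i^{1/p}\hat r_i$ with weights $c_i \geq 0$ and fixed directions $\hat r_i$. Setting $A_{i\sigma} = |\langle \hat r_i, \sigma\rangle|^p$ and $B_{ij} = |\langle \hat r_i, e_j\rangle|^p$, the defect becomes
\[
F(c) = \sum_{\sigma\in\HH}\Big(\sum_i c_i A_{i\sigma}\Big)^{1/p} - 2\sum_{j=1}^3\Big(\sum_i c_i B_{ij}\Big)^{1/p}.
\]
This depends on $c \in [0,\infty)^N$ only through the seven linear functionals $\ell_\sigma(c) = \sum_i c_iA_{i\sigma}$ and $m_j(c) = \sum_i c_i B_{ij}$. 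Suppose $F(c) < 0$ for some $c$. Then the polyhedron
\[
P = \{c' \geq 0 : \ell_\sigma(c') = \ell_\sigma(c),\ m_j(c') = m_j(c)\}
\]
is nonempty and bounded, since some $m_j$ controls every coordinate with $B_{ij} \neq 0$, and rows with $\hat r_i = 0$ may be discarded. Moreover $F$ is constant on $P$. A vertex of $P$ has at most $7$ nonzero coordinates, so we obtain a counterexample using at most $7$ rows. I can also merge rows that are parallel, since $|\langle \lambda \hat r, a\rangle|^p$ just rescales the weight. This Carathéodory-type step reduces the problem to $N \leq 7$, and it is routine.

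The main obstacle is going from $7$ rows down to $3$. My first attempt would exploit more structure than the seven functionals by using the symmetry group of the configuration. The sign patterns in $\HH$ together with $e_1, e_2, e_3$ are permuted by coordinate permutations and by the sign changes that fix $\HH$ up to $\pm$. Averaging over this group should let me assume the counterexample matrix is invariant. I would then try to use the rank of $M$: the column space of $M$ is at most $3$-dimensional. I would attempt to show that an extremal counterexample, which minimises $F$ over the weight polytope after normalising $\sum_{j} m_j = 1$, has its support concentrated on rows that can be transformed to coordinate directions, so that it becomes a genuine $\ell^p(3)$ configuration.

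Concretely, I would parametrise by $t$ a one-dimensional face of the feasible set that moves weight between two rows. Along such a face $F$ is a sum of functions $t \mapsto (\alpha+\beta t)^{1/p}$, each with its own sign. I would analyse the sign of $F''$ to show that the minimum of $F$ is attained where one of the weights vanishes. Iterating this would remove rows one at a time until at most three remain. I expect this concavity bookkeeping, where the subtracted terms have the wrong sign, to be the genuinely hard part of the argument.
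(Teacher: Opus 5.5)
\textbf{There is a genuine gap in the step from $7$ rows to $3$.} Your easy direction, your treatment of $p=\infty$, and your Carath\'eodory reduction to at most $7$ rows are all fine. But the step you yourself flag as the hard part is never carried out, and none of the routes you sketch obviously works.

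\emph{Symmetry averaging.} The defect is neither linear nor convex in the matrix $M$. It is also not additive under stacking rows, because $\lVert Ma\rVert_p$ combines rows through $p$-th powers and a $p$-th root. So averaging over a symmetry group gives no control on its sign, and an invariant counterexample would not have fewer rows anyway.

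\emph{Rank of $M$.} That the column space of $M$ is at most $3$-dimensional says nothing about that subspace of $\ell^p(N)$ being isometric to $\ell^p(3)$.

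\emph{Second derivative along edges.} On the polytope normalised only by $\sum_j m_j = 1$, edges move weight between two rows. Along such an edge the individual $m_j$ change linearly, so the terms $-2m_j^{1/p}$ are convex in $t$. Then $F$ is a difference of concave functions, and there is no reason for its minimum to sit at an endpoint. This is exactly the ``wrong sign'' problem you anticipate.

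\textbf{The missing idea is to freeze only the three subtracted functionals, not all seven.} On
\[
P' = \{c' \geq 0 : m_j(c') = m_j(c),\ j = 1,2,3\}
\]
the right-hand side $2\sum_j m_j(c')^{1/p}$ is constant. The map $c' \mapsto \sum_{\sigma \in \HH} \ell_\sigma(c')^{1/p}$ is concave, being a sum of the concave function $s \mapsto s^{1/p}$ composed with linear functionals.

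$P'$ is a compact polytope. Normalising each $\hat r_i$ to the $\ell^p(3)$ unit sphere gives $\sum_j B_{ij} = 1$, so $\sum_i c'_i = \sum_j m_j(c)$ on $P'$ (your own boundedness argument also works). A continuous concave function on it attains its minimum at a vertex $c^*$, so $F(c^*) \leq F(c) < 0$.

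Since $P'$ is cut out of the orthant by only three linear equations, the vectors $(B_{i1}, B_{i2}, B_{i3})$ indexed by the support of a vertex are linearly independent. Hence $c^*$ has at most three nonzero coordinates. The rows $(c^*_i)^{1/p}\hat r_i$ on that support, padded with zero rows if needed, give a counterexample in $\ell^p(3)$.

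This is the paper's argument. It makes the $7$-row step and all the second-derivative bookkeeping unnecessary, because the terms with the wrong sign are held fixed rather than analysed.
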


Our strategy is to assume that $\ell^p(N)$ is not Hornich-Hlawka for some $N$, so that the inequality in \eqref{eq: uvw rewrite} is flipped. By an appropriate parametrization, we will treat the left-hand side as a concave function over a compact polytope where the right-hand side is fixed, and use the Bauer Maximum Principle to show that the left-hand side is minimized at a vertex of that polytope. The coordinates of that vertex will then be cast into $\ell^p(3)$, proving that $\ell^p(3)$ is also not Hornich-Hlawka.

Our proof, though originally formulated with AI assistance and simplified and rewritten solely by the author, was independently generated by \cite{palomar-2026-09-25-000006-v1} using essentially the same ideas and was formalized in Lean. Their work investigates a generalization of the Hornich-Hlawka inequality for the Schatten $p$-norm of matrices \cite[Problem~7]{audenaert2012problems}.

The focus of \cite{palomar-2026-09-25-000006-v1} is on the Lean formalization of a partial resolution of this problem, which includes a proof of Theorem \ref{thm: all lpn} as an intermediate result. Though formalized in Lean, there is not a human-readable manuscript detailing the proof as of the date this paper is written; as such, we provide the proof below.

\begin{proof}[Proof of Theorem \ref{thm: all lpn}] The ``if" direction is clear, so we focus on the ``only if" direction. Also, if $p = \infty$, then by Corollary \ref{cor: bad regime} the theorem is vacuously true, so we assume that $1 \leq p < \infty$.

Assume that $\ell^p(N)$ is not a Hornich-Hlawka space for some $N \geq 3$. 
Writing the negation of the Hornich-Hlawka inequality \eqref{eq: uvw rewrite}, we get that there are $u, \, v, \, w \in \ell^p(N)$ such that
\begin{align}
\lVert u + v + w \rVert_p + \lVert u + v - w \rVert_p + \lVert u - v + w \rVert_p  + \lVert - u + v + w \rVert_p < 2(\lVert u \rVert_p + \lVert v \rVert_p + \lVert w \rVert_p). \label{eq: uvw failure}
\end{align}
For each $j \in \{1, \, \ldots, \, N\}$, let $u_j, \, v_j,\, w_j$ be the $j$-th coordinates of $u, \, v, \, w$ respectively, and let 
\[
t_j = (u_j, \, v_j, \, w_j) \in \R^3, \quad \lambda_j = |u_j|^p + |v_j|^p + |w_j|^p.
\]
By discarding each $t_j$ that is the zero triplet, we assume without loss that each $t_j$ is not the zero triplet and write
\[
r_j = \lambda_j^{-1/p}t_j \in \R^3,
\]
so  that $r_j$ lies on the $\ell^p(3)$-unit sphere \[S_p = \{(a, b, c) \in \R^3 : |a|^p + |b|^p + |c|^p = 1\},\]
because
\begin{align}
|r_{j1}|^p + |r_{j2}|^p + |r_{j3}|^p = \lVert r_j \rVert_p^p = \lambda_j^{-1} \lVert t_j \rVert_p^p = 1. \label{eq: rj unit sphere}
\end{align}
Let 
\[
g_j = \left(|r_{j1}|^p, \, |r_{j2}|^p, \, |r_{j3}|^p\right) \in \R^3,
\]
and define the moment vector
\[
M = (M_1, \, M_2, \, M_3), \quad M_k = \sum_{j=1}^N \lambda_j g_{jk}, \quad k \in \{1, \, 2, \, 3\}.
\]
By construction, we have
\[
M_1 = \lVert u \rVert_p^p, \quad M_2 = \lVert v \rVert_p^p, \quad M_3 = \lVert w \rVert_p^p.
\]
Now consider the polytope
\[
P = \left\{\alpha = (\alpha_1, \, \ldots, \, \alpha_N) : \alpha_j \geq 0, \quad \sum_{j=1}^N \alpha_j g_j = M\right\}.
\]
We claim that $P$ is nonempty and compact. It is nonempty because $\lambda = (\lambda_1, \, \ldots, \, \lambda_N) \in P$. Also, by \eqref{eq: rj unit sphere} we have that $
g_{j1} + g_{j2} + g_{j3} = 1 $, so if $\alpha \in P$, then
\[
\sum_{j=1}^N \alpha_j = \sum_{j=1}^N \alpha_j (g_{j1} + g_{j2} + g_{j3}) = M_1 + M_2 + M_3.
\]
Thus, $P$ is bounded. It is also closed, so it is compact.

Define over $P$ the function
\begin{align}
\Phi(\alpha) = \sum_{\sigma \in \HH}\left(\sum_{j=1}^N \alpha_j \left|\sigma_1 r_{j1} + \sigma_2 r_{j2} + \sigma_3 r_{j3}\right|^p\right)^{1/p}, \label{eq: phi definition}    
\end{align}
so that \eqref{eq: uvw failure} can be rewritten as
\begin{align}
\Phi(\lambda) < 2\left(M_1^{1/p} + M_2^{1/p} + M_3^{1/p}\right).\label{eq: phi lambda}
\end{align}

Inspecting \eqref{eq: phi definition}, each expression inside a $p$-th root is linear in $\alpha$, while the map $s \mapsto s^{1/p}$ is concave for $p \geq 1$. Therefore, $\Phi$ is a continuous concave function in $\alpha$. A continuous concave function over a compact polytope attains its minimum at a vertex, so we may let $\alpha^* \in P$ be a vertex of $P$ that minimizes $\Phi$. 

We claim that $\alpha^*$ has at most 3 positive coordinates. Suppose to the contrary that there are at least 4 positive coordinates of $\alpha^*$; the corresponding vectors $g_j$ in $\R^3$ would be linearly dependent. Consequently, there is a nonzero vector $h = (h_1, \ldots, h_N)$ supported only on those positive coordinates such that
\[
\sum_{j} h_j g_j  = 0.
\]
Now take a sufficiently small $\varepsilon > 0$ such that $\alpha^* + \varepsilon h$ and $\alpha^* - \varepsilon h$ are both nonnegative vectors. Both vectors remain in $P$, because
\[
\sum_{j}(\alpha^*_j \pm \varepsilon h_j)g_j = M.
\]
Thus, we have that
\[
\alpha^* = \frac12(\alpha^* + \varepsilon h) + \frac12(\alpha^* - \varepsilon h)
\]
is a nontrivial convex decomposition in $P$ because $h \neq 0$. This contradicts the assumption that $\alpha^*$ is a vertex of $P$.

Hence, we may let the positive coordinates of $\alpha^*$ be indexed by $j_1, \, \ldots, \, j_s$, where $s \leq 3$, and we write
\[
\alpha^* = \sum_{k=1}^s \beta_k e_{j_k}, \quad s \leq 3
\]
with $\beta_k \geq 0 $ such that the moment identity is retained:
\[
M=\sum_{k=1}^{s}\beta_kg_{j_k};
\]
we have
\begin{align}
\Phi(\alpha^*) \leq \Phi(\lambda). \label{eq: alpha star}
\end{align}
Write $
r_{j_k} = (a_k, b_k, c_k) \in S_p$ and define $u', v', w' \in \ell^p(3)$ by
\[
u'_k = \begin{cases}
\beta_k^{1/p} a_k ,&k \leq s\\
0, &k > s
\end{cases}, \quad v'_k = \begin{cases}
\beta_k^{1/p} b_k ,&k \leq s\\
0, &k > s
\end{cases}, \quad w'_k = \begin{cases}
\beta_k^{1/p} c_k ,&k \leq s\\
0, &k > s
\end{cases}, \quad 1 \leq k \leq 3.
\]
We have
\[
\lVert u' \rVert_p^p = M_1, \quad \lVert v' \rVert_p^p = M_2, \quad \lVert w' \rVert_p^p = M_3,
\]
and for every sign pattern $\sigma \in \HH$, we have
\begin{align}
\lVert \sigma_1 u' + \sigma_2 v' + \sigma_3 w' \rVert_p^p = \sum_{k=1}^s \beta_k |\sigma_1 a_k + \sigma_2 b_k + \sigma_3 c_k|^p. \label{eq: all sigma patterns}
\end{align}
Combining  \eqref{eq: phi definition}, \eqref{eq: phi lambda},  \eqref{eq: alpha star}, and \eqref{eq: all sigma patterns} gives us
\[
\sum_{\sigma \in \HH} \lVert\sigma_1 u' + \sigma_2 v' + \sigma_3 w' \rVert_p = \Phi(\alpha^*) \leq \Phi(\lambda) < 2\left(M_1^{1/p} + M_2^{1/p} + M_3^{1/p}\right)
\]
which expands out to
\[
\lVert u' + v' + w' \rVert_p + \lVert u' + v' - w' \rVert_p + \lVert u' - v' + w' \rVert_p + \lVert -u' + v' + w' \rVert_p  < 2(\lVert u' \rVert_p + \lVert v' \rVert_p + \lVert w' \rVert_p).
\]
We see that $u', v', w' \in \ell^p(3)$ violate the Hornich-Hlawka inequality \eqref{eq: uvw rewrite}, so $\ell^p(3)$ is not a Hornich-Hlawka space.
\end{proof}

\begin{corollary} \label{cor: n >= 3 hh}
Let $N \geq 3$. Then $\ell^p(N)$ is Hornich-Hlawka if and only if $\ell^p(3)$ is Hornich-Hlawka.
\end{corollary}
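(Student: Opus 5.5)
This corollary is a direct combination of Theorem \ref{thm: all lpn} with Proposition \ref{prop: zero padding}. I would prove each implication separately for a fixed $N \geq 3$ and a fixed $p \in [1,\infty]$.

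\emph{Forward direction.} Suppose $\ell^p(3)$ is Hornich-Hlawka. Theorem \ref{thm: all lpn} then says that $\ell^p(M)$ is Hornich-Hlawka for every $M \geq 3$, and in particular for $M = N$.

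The case $p = \infty$ needs no separate argument. Corollary \ref{cor: bad regime} shows that $\ell^\infty(3)$ is not Hornich-Hlawka, so the hypothesis is never satisfied there and the forward implication holds vacuously, exactly as at the start of the proof of Theorem \ref{thm: all lpn}.

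\emph{Reverse direction.} I would argue by contraposition. If $\ell^p(3)$ is not Hornich-Hlawka, then Proposition \ref{prop: zero padding} shows that $\ell^p(N)$ is not Hornich-Hlawka. The mechanism is zero-padding: a violating triple in $\ell^p(3)$ is extended by zeros to a violating triple in $\ell^p(N)$, and every norm appearing in \eqref{eq: hh inequality} is unchanged. Alternatively, one can observe directly that $\ell^p(3)$ sits isometrically inside $\ell^p(N)$ as the first three coordinates, and the Hornich-Hlawka property passes to subspaces.

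There is no real obstacle. The only point to check is that Theorem \ref{thm: all lpn} is read as an equivalence with its right-hand side quantified over all $N \geq 3$, whereas the corollary concerns a single fixed $N$. The forward direction therefore specializes the universal statement to that $N$, and the reverse direction relies on Proposition \ref{prop: zero padding} rather than on the theorem.
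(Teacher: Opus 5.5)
Your proof is correct and follows the paper's argument exactly: the forward direction is Theorem~\ref{thm: all lpn} specialized to the given $N$, and the converse is the zero-padding result, Proposition~\ref{prop: zero padding}. Your side remarks on the vacuous $p=\infty$ case and on the Hornich-Hlawka property passing to isometric subspaces are accurate but not needed.
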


\begin{proof}
If $\ell^p(3)$ is Hornich-Hlawka, then Theorem \ref{thm: all lpn} implies that $\ell^p(N)$ is Hornich-Hlawka. Conversely, if $\ell^p(3)$ is not Hornich-Hlawka, then Proposition \ref{prop: zero padding} implies $\ell^p(N)$ is not Hornich-Hlawka. 
\end{proof}

Theorem \ref{thm: all lpn} may now be invoked to prove a generalization to $L^p$ spaces.

\begin{theorem} \label{thm: n=3 reduction}
Let $1 \leq p \leq \infty$. Then $\ell^p(3)$ is Hornich-Hlawka if and only if  $L^p(\Omega, \, \mu)$ is Hornich-Hlawka for every measure space $(\Omega, \, \A, \, \mu)$.
\end{theorem}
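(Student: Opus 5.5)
The ``if'' direction is immediate: taking $\Omega=\{1,2,3\}$ with counting measure gives $L^p(\Omega,\mu)=\ell^p(3)$ isometrically. For the ``only if'' direction, assume $\ell^p(3)$ is Hornich-Hlawka. The case $p=\infty$ is vacuous by Corollary \ref{cor: bad regime}, so take $1\le p<\infty$. Fix a measure space $(\Omega,\A,\mu)$ and $u,v,w\in L^p(\Omega,\mu)$. We want the inequality \eqref{eq: uvw rewrite} for these three functions.

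The plan is to approximate by simple functions and then reduce to $\ell^p(N)$. Simple functions with finite-measure support are dense in $L^p$ for $p<\infty$. So choose such simple $u_n,v_n,w_n$ converging to $u,v,w$ in $L^p$. Both sides of \eqref{eq: uvw rewrite} are continuous in $(u,v,w)$, so it suffices to prove the inequality for simple functions.

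Given simple $u,v,w$, take a common finite partition $A_1,\dots,A_N$ of the union of their supports into sets of positive finite measure, on each of which all three functions are constant. The map
\[
f\mapsto \bigl(\mu(A_1)^{1/p}f|_{A_1},\dots,\mu(A_N)^{1/p}f|_{A_N}\bigr)
\]
is a linear isometry from $\mathrm{span}$ of the indicators $\mathds{1}_{A_j}$ (with the $L^p$ norm) onto $\ell^p(N)$. Every combination $\sigma_1u+\sigma_2v+\sigma_3w$ lies in this span, so \eqref{eq: uvw rewrite} for $u,v,w$ is equivalent to \eqref{eq: uvw rewrite} for their images in $\ell^p(N)$. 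If $N<3$, pad with zeros as in Proposition \ref{prop: zero padding}, since $\ell^p(N)$ embeds isometrically into $\ell^p(3)$. If $N\ge 3$, apply Theorem \ref{thm: all lpn}: $\ell^p(N)$ is Hornich-Hlawka because $\ell^p(3)$ is.

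The main point needing care is the density step. We need finite-measure supports so that the partition has finitely many pieces of finite positive measure, and we need $p<\infty$; both are handled by the standard density theorem together with the exclusion of $p=\infty$. The rest is bookkeeping.
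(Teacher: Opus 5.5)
Your proposal is correct and follows essentially the same route as the paper: approximate by simple functions, rescale the common finite partition by $\mu(A_j)^{1/p}$ to land isometrically in $\ell^p(N)$ (zero-padding if $N<3$), and invoke Theorem \ref{thm: all lpn}. The only difference is cosmetic. You argue directly and pass the non-strict inequality to the limit, whereas the paper argues by contrapositive, perturbing a strict violation to simple functions that still violate it.
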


\begin{proof} Again, the ``if" direction is clear, and the case $p = \infty$ is vacuously true for the ``only if" direction, so we look at the ``only if" direction for $1 \leq p < \infty$.

Assume that there is a space $L^p(\Omega, \, \mu)$ that fails to be Hornich-Hlawka. Let $u, v, w \in L^p(\Omega, \, \mu)$ satisfy 
\begin{align}
\lVert u + v + w \rVert + \lVert u + v - w \rVert + \lVert u - v + w \rVert  + \lVert - u + v + w \rVert < 2\left(\lVert u \rVert + \lVert v \rVert + \lVert w \rVert\right). \label{eq: lp violation}
\end{align}

Simple functions are dense in $L^p(\Omega, \, \mu)$. Since all seven norm expressions in \eqref{eq: lp violation} depend continuously on $u, \, v,  \, w$, we may replace $u, \, v, \, w$ by sufficiently close simple functions that maintain the inequality. By taking a common refinement of their finite measurable partitions, we may write
\[
u = \sum_{j=1}^N u_j \mathbf{1}_{E_j}, \quad v = \sum_{j=1}^N v_j \mathbf{1}_{E_j}, \quad w = \sum_{j=1}^N w_j \mathbf{1}_{E_j},
\]
where $E_1, \ldots, E_N$ are pairwise disjoint. By discarding cells $E_j$ on which all three values of $u, \, v, \, w$ vanish, we may assume without loss that at least one of $u, \, v, \, w$ is nonzero on each cell $E_j$. Since each function $u, \, v, \, w$ is in $L^p(\Omega, \, \mu)$, the measure of each $E_j$ is finite. 

Define vectors $\hat{u}, \, \hat{v}, \, \hat{w} \in \ell^p(N)$ by
\[
\hat{u}_j = \mu(E_j)^{1/p}u_j, \quad \hat{v}_j = \mu(E_j)^{1/p}v_j,  \quad\hat{w}_j = \mu(E_j)^{1/p}w_j,
\]
padding each vector with zeroes if necessary if there are fewer than three cells $E_j$.
For all scalars $\alpha, \, \beta, \, \gamma \in \R$, we have
\begin{align*}
\lVert \alpha u + \beta v + \gamma w \rVert_{L^p(\Omega, \,\mu)}^p &= \sum_{j=1}^N \mu(E_j) |\alpha u_j + \beta v_j + \gamma w_j|^p \\
&= \sum_{j=1}^N  |\alpha \hat{u}_j + \beta \hat{v}_j + \gamma \hat{w}_j|^p = \lVert \alpha \hat{u} + \beta \hat{v} + \gamma \hat{w} \rVert_{\ell^p(N)}^p.
\end{align*}
Therefore, every norm appearing in \eqref{eq: lp violation} is preserved exactly. This means that $\hat{u}, \, \hat{v}, \, \hat{w} \in \ell^p(N)$ constitute a counterexample to the Hornich-Hlawka inequality \eqref{eq: uvw rewrite}. By Theorem \ref{thm: all lpn}, the space $\ell^p(3)$ is not Hornich-Hlawka.
\end{proof}

\subsection{\texorpdfstring{Reduction to the space $\ell^{p_0}(3)$}{Reduction to the space lp0(3)}}

We also prove the following partial reduction for three-dimensional $\ell^p$ spaces: a positive resolution for the Hornich-Hlawka property of $\ell^{p_0}(3)$ would imply that of all $\ell^p(3)$ in the range $2 < p \leq p_0$.

\begin{theorem} \label{thm: p=p0 reduction}
Let $1 < r < p < \infty$. If $\ell^p(3)$ is Hornich-Hlawka, then $\ell^r(3)$ is Hornich-Hlawka.
\end{theorem}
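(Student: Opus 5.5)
The plan is to realize $\ell^r(3)$ isometrically as a subspace of a Bochner space $L^1_{\ell^p(3)}(\Omega,\mathbb{P})$. Proposition \ref{prop: bochner hh} then finishes the argument: if $\ell^p(3)$ is Hornich-Hlawka, so is $L^1_{\ell^p(3)}(\Omega,\mathbb{P})$, and the Hornich-Hlawka inequality passes to any isometric subspace. The embedding is a random diagonal scaling
\[
x=(x_1,x_2,x_3)\longmapsto \bigl(\omega\mapsto (\theta_1(\omega)x_1,\theta_2(\omega)x_2,\theta_3(\omega)x_3)\bigr),
\]
where $\theta_1,\theta_2,\theta_3$ are i.i.d. positive random variables, rescaled by a constant. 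This map is clearly linear, so everything reduces to showing that $\mathbb{E}\lVert D_\theta x\rVert_p$ is a constant multiple of $\lVert x\rVert_r$.

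Set $\beta=r/p\in(0,1)$. I choose $\theta_j=S_j^{1/p}$, where $S_1,S_2,S_3$ are i.i.d. positive $\beta$-stable random variables. These are characterized by the Laplace transform $\mathbb{E}e^{-sS_j}=e^{-s^\beta}$ for $s\ge 0$. Existence of such a law is the one place where Lemma \ref{lemma: balakrishnan} enters. By the identity \eqref{eq: balakrishnan}, $s\mapsto s^\beta$ is a mixture of the Bernstein functions $1-e^{-ts}$ with positive weights, so $s^\beta$ is a Bernstein function. Hence $e^{-s^\beta}$ is completely monotone, and by Bernstein's theorem it is the Laplace transform of a probability measure on $[0,\infty)$. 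If I want to avoid Bernstein's theorem, I would instead quote the standard existence of one-sided stable laws.

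With this choice, independence gives, for every $s\ge 0$,
\[
\mathbb{E}\exp\Bigl(-s\sum_j S_j|x_j|^p\Bigr)=\prod_j e^{-s^\beta|x_j|^{p\beta}}=\exp\Bigl(-s^\beta\lVert x\rVert_r^{r}\Bigr).
\]
Therefore $\sum_j S_j|x_j|^p$ has the same law as $\lVert x\rVert_r^{p}\,S$, with $S$ a copy of $S_1$. Consequently $\lVert D_\theta x\rVert_p=\bigl(\sum_j S_j|x_j|^p\bigr)^{1/p}$ has the same law as $\lVert x\rVert_r S^{1/p}$. Taking expectations,
\[
\mathbb{E}\lVert D_\theta x\rVert_p=c\,\lVert x\rVert_r,\qquad c=\mathbb{E}S^{1/p}.
\]
The map $x\mapsto c^{-1}D_\theta x$ is then a linear isometry of $\ell^r(3)$ into $L^1_{\ell^p(3)}(\Omega,\mathbb{P})$, which completes the argument.

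The main obstacle is checking that $0<c<\infty$. Positive $\beta$-stable variables have finite moments exactly of orders $<\beta$, so $c<\infty$ requires $1/p<\beta=r/p$, that is, $r>1$; this is precisely the hypothesis. To verify the moment bound I would use the formula
\[
\mathbb{E}S^{\gamma}=\frac{\gamma}{\Gamma(1-\gamma)}\int_0^\infty\bigl(1-\mathbb{E}e^{-tS}\bigr)t^{-\gamma-1}\,dt=\frac{\gamma}{\Gamma(1-\gamma)}\int_0^\infty\bigl(1-e^{-t^\beta}\bigr)t^{-\gamma-1}\,dt,
\]
which is Lemma \ref{lemma: balakrishnan} with $x=S$ and exponent $\gamma=1/p\in(0,1)$, combined with Tonelli. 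The integral converges at $0$ because $1-e^{-t^\beta}\sim t^\beta$ and $\beta-\gamma-1>-1$, and it converges at $\infty$ because $\gamma>0$. Positivity of $c$ is immediate.
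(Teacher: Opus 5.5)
Your proposal is correct and is essentially the paper's proof. You use the same random diagonal embedding $x \mapsto c^{-1}\bigl(S_1^{1/p}x_1, S_2^{1/p}x_2, S_3^{1/p}x_3\bigr)$ with i.i.d.\ positive $(r/p)$-stable $S_j$, the same Laplace-transform identification $\sum_j S_j|x_j|^p \overset{d}{=} \lVert x\rVert_r^p S$, the same finiteness argument for $\mathbb{E}S^{1/p}$ via \eqref{eq: balakrishnan} and Tonelli (which is where $r>1$ enters), and then Proposition \ref{prop: bochner hh}. The only difference is that you derive existence of the one-sided stable law from \eqref{eq: balakrishnan} through Bernstein functions and Bernstein's theorem, whereas the paper cites it; both are valid.
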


\begin{corollary} \label{cor: p0}
If $\ell^{p_0}(3)$ is Hornich-Hlawka, then $\ell^p(3)$ is Hornich-Hlawka for all $2 < p \leq p_0$.
\end{corollary}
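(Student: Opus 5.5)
The plan is to realize the $\ell^r(3)$-norm as a nonnegative mixture of norms that are each Hornich-Hlawka because $\ell^p(3)$ is. The basic observation is that \eqref{eq: uvw rewrite} is a \emph{linear} inequality in the seven norm values. Hence, if $\lVert \cdot \rVert = \int \lVert \cdot \rVert_\theta \, d\nu(\theta)$ with $\nu \geq 0$ and each seminorm $\lVert \cdot \rVert_\theta$ satisfies \eqref{eq: uvw rewrite}, then so does $\lVert \cdot \rVert$: integrate the seven inequalities against $\nu$.

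Two families of seminorms on $\R^3$ inherit the Hornich-Hlawka property from $\ell^p(3)$.
\begin{enumerate}
\item The weighted norms $x \mapsto \lVert Dx \rVert_p$ with $D = \mathrm{diag}(d_1, d_2, d_3)$ and $d_i \geq 0$. These are pullbacks of the $\ell^p(3)$-norm under a linear map, and the inequality is checked on the images $Du, Dv, Dw$.
\item More generally, any $x \mapsto \lVert Tx \rVert_{L^p(\Omega,\mu)}$ with $T$ linear. By Theorem \ref{thm: n=3 reduction}, $L^p(\Omega,\mu)$ is Hornich-Hlawka once $\ell^p(3)$ is.
\end{enumerate}
So it suffices to write $\lVert x \rVert_r$ as an integral of such seminorms. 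Equivalently, it suffices to embed $\ell^r(3)$ isometrically into some $L^1_{L^p}$-type space and then apply Proposition \ref{prop: bochner hh}.

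To produce the representation I would use Lemma \ref{lemma: balakrishnan} with $\beta = r/p \in (0,1)$, applied coordinatewise to $|x_i|^r = (|x_i|^p)^{\beta}$. This gives
\[
\lVert x \rVert_r^r = \frac{\beta}{\Gamma(1-\beta)} \int_0^\infty \sum_{i=1}^3 \bigl(1 - e^{-t|x_i|^p}\bigr) t^{-\beta-1}\, dt .
\]
The idea is to rewrite the integrand in terms of $p$-th powers of linear images of $x$, and then pass from the $r$-th power back to a degree-one quantity. For that second step I would apply the lemma again, with exponent $1/r \in (0,1)$, to $\lVert x\rVert_r^r$. This expresses $\lVert x\rVert_r$ as a positive mixture of terms of the form $1 - e^{-s\lVert x\rVert_r^r}$. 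I would then try to regroup the resulting double integral, by a change of variables $t \mapsto$ scaling of coordinates, into
\[
\lVert x\rVert_r = \int \lVert D_\theta x\rVert_p \, d\nu(\theta)
\]
for a positive measure $\nu$ on diagonal weights. Homogeneity is a useful guide: both sides are $1$-homogeneous, so it is enough to match the two sides after restricting to the $\ell^p$-unit sphere and integrating out the radial variable with a Gamma-function identity.

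The main obstacle is exactly this regrouping step. Bernstein-type expressions $1 - e^{-t\,\cdot}$ of sums of $p$-th powers do not factor coordinatewise, so $\lVert x\rVert_r$ is not obviously a positive combination of weighted $\ell^p$ norms. If the direct diagonal-weight mixture fails, my fallback has two parts.
\begin{enumerate}
\item Allow non-diagonal linear maps, i.e.\ general $T: \R^3 \to L^p(\Omega)$, so that the $t$-integral is itself an $L^p$ integral over an auxiliary measure space.
\item Verify that the resulting kernel has a nonnegative density. I expect this to be where $r > 1$ and the complete monotonicity of $s \mapsto s^{\beta}$'s Lévy measure $t^{-\beta-1}dt$ enter.
\end{enumerate}
With such a representation in hand, the theorem follows immediately from the linearity observation above. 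Corollary \ref{cor: p0} is then the case $p = p_0$.
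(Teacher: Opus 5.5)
There is a genuine gap. The representation you aim for,
\[
\lVert x\rVert_r=\int\lVert D_\theta x\rVert_p\,d\nu(\theta)
\]
with diagonal $D_\theta$ (an isometric embedding $\ell^r(3)\hookrightarrow L^1_{\ell^p(3)}$, after which Proposition~\ref{prop: bochner hh} or your linearity observation finishes), is exactly what the paper proves. However, your proposal never produces $\nu$: you name its construction as the main obstacle and leave it open. Your first step also points the wrong way. Applying Lemma~\ref{lemma: balakrishnan} coordinatewise to $|x_i|^r$ yields sums $\sum_i(1-e^{-t|x_i|^p})$, which, as you note yourself, do not recombine into weighted $\ell^p$ norms. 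The fallback to non-diagonal maps and Theorem~\ref{thm: n=3 reduction} is not needed.

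The missing idea is subordination by a positive stable law. Put $\alpha=r/p\in(0,1)$ and let $S,S_1,S_2,S_3$ be i.i.d.\ with $\mathbb{E}[e^{-tS}]=e^{-t^\alpha}$. Such a probability law on $[0,\infty)$ exists because $\lambda\mapsto e^{-\lambda^\alpha}$ is completely monotone.

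Work with $e^{-t\,\cdot}$ rather than $1-e^{-t\,\cdot}$. The exponential of a sum factors over coordinates, and averaging each factor over its own stable weight turns $|x_j|^p$ into $|x_j|^r$, since $\mathbb{E}[e^{-tS_j|x_j|^p}]=e^{-t^\alpha|x_j|^r}$. Multiplying over $j$ gives $\sum_j S_j|x_j|^p\overset{d}{=}\lVert x\rVert_r^p S$, hence
\[
\mathbb{E}\Bigl[\Bigl(\sum_{j=1}^3 S_j|x_j|^p\Bigr)^{1/p}\Bigr]=\lVert x\rVert_r\,\mathbb{E}[S^{1/p}].
\]
So $\nu$ is the law of $\mathrm{diag}(S_1^{1/p},S_2^{1/p},S_3^{1/p})/m$ with $m=\mathbb{E}[S^{1/p}]$.

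Lemma~\ref{lemma: balakrishnan} is needed only to show $0<m<\infty$. With $\beta=1/p$ and Tonelli, $m$ is a constant times $\int_0^\infty(1-e^{-t^\alpha})t^{-1/p-1}\,dt$, which is finite exactly when $1/p<\alpha$, i.e.\ $r>1$. So $r>1$ enters through integrability of the weights, not through positivity of a kernel as you expected. Positivity is automatic, since $\nu$ is a probability measure.

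With this representation in hand, your remaining steps go through, and the corollary follows by taking $p=p_0$ and $r\in(2,p_0)$.
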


The proof of Theorem \ref{thm: p=p0 reduction} relies on properties of stable random variables, which we state and prove before proceeding with the proof of Theorem \ref{thm: p=p0 reduction}.

From now on, let $0 < \alpha < 1$ and define $\nu_\alpha$ to be the positive $\alpha$-stable probability measure on $[0, \infty)$ characterized by
\[
\int_0^\infty e^{-ts} \, d\nu_\alpha(s) = e^{-t^\alpha}, \quad t \geq 0.
\]
The existence of this distribution follows, for instance, from \cite[Example~3.16]{janson2022stable} with $\lambda = 1$, and uniqueness follows from uniqueness of Laplace transforms. Also, let $S$ be a random variable with this distribution, i.e. \[
\mathbb{E}[e^{-tS}] = e^{-t^\alpha}.
\]

We need an auxiliary result about $\alpha$-stable random variables.

\begin{lemma} \label{lemma: finite mean}
If $0 < \beta < \alpha$, then $\mathbb{E}[S^{\beta}] < \infty$.
\end{lemma}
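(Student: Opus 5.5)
We will prove $\mathbb{E}[S^\beta] < \infty$ using Lemma \ref{lemma: balakrishnan}, which expresses $x^\beta$ as an integral of $1 - e^{-tx}$ against $t^{-\beta-1}\,dt$. The point is that only the Laplace transform of $S$ is known explicitly, and this formula reduces moments of $S$ to integrals of that Laplace transform.

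\emph{Step 1.} Apply Lemma \ref{lemma: balakrishnan} pointwise with $x = S \geq 0$ and $0<\beta<\alpha<1$. This gives
\[
S^\beta = \frac{\beta}{\Gamma(1-\beta)}\int_0^\infty (1 - e^{-tS})\,t^{-\beta-1}\,dt .
\]

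\emph{Step 2.} Take expectations of both sides. The integrand $(1-e^{-tS})t^{-\beta-1}$ is nonnegative and jointly measurable in $(t,\omega)$, so Tonelli's theorem lets us interchange expectation and integral without any integrability assumption. Using $\mathbb{E}[e^{-tS}] = e^{-t^\alpha}$, we obtain
\[
\mathbb{E}[S^\beta] = \frac{\beta}{\Gamma(1-\beta)}\int_0^\infty (1 - e^{-t^\alpha})\,t^{-\beta-1}\,dt .
\]

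\emph{Step 3.} Show the last integral is finite by splitting it at $t=1$.
\begin{itemize}
\item On $(0,1]$, the inequality $1-e^{-y}\le y$ gives the bound $t^{\alpha-\beta-1}$. This is integrable near $0$ precisely because $\alpha>\beta$.
\item On $[1,\infty)$, bound $1-e^{-t^\alpha}\le 1$. Then $t^{-\beta-1}$ is integrable at infinity since $\beta>0$.
\end{itemize}

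Alternatively, the substitution $s=t^\alpha$ turns the integral into $\alpha^{-1}\int_0^\infty(1-e^{-s})s^{-\beta/\alpha-1}\,ds$. By Lemma \ref{lemma: balakrishnan} again, this equals $\Gamma(1-\beta/\alpha)/\beta$, which yields the explicit value
\[
\mathbb{E}[S^\beta]=\frac{\Gamma(1-\beta/\alpha)}{\Gamma(1-\beta)} .
\]

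The only delicate point is justifying the interchange of expectation and integral, and nonnegativity of the integrand handles it via Tonelli. The integrability check near $t=0$ is exactly where the hypothesis $\beta<\alpha$ is needed.
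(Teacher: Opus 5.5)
Your proof is correct and follows the paper's argument exactly: you apply Lemma \ref{lemma: balakrishnan} with $x = S$, interchange expectation and integral by Tonelli, and split the resulting integral at $t = 1$ using $1 - e^{-t^\alpha} \leq t^\alpha$ and $1 - e^{-t^\alpha} \leq 1$. Your alternative substitution $s = t^\alpha$, giving the closed form $\mathbb{E}[S^\beta] = \Gamma(1-\beta/\alpha)/\Gamma(1-\beta)$, is also correct and goes slightly beyond what the paper records.
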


\begin{proof}
First, we put $x = S$ in \eqref{eq: balakrishnan} to get
\[
S^\beta = \frac{\beta}{\Gamma(1 - \beta)}\int_0^\infty (1 - e^{-tS})t^{-\beta-1}\,dt.
\]
Now we take expectations of both sides. Since the integrand on the right-hand side is nonnegative, Tonelli's Theorem allows us to interchange the expectation and integral to arrive at
\begin{align*}
\mathbb{E}[S^\beta] &= \frac{\beta}{\Gamma(1 - \beta)}\int_0^\infty \left(1 -\mathbb{E}[e^{-tS}]\right)t^{-\beta-1}\,dt \\
&= \frac{\beta}{\Gamma(1 - \beta)}\int_0^\infty \left(1 -e^{-t^\alpha}\right)t^{-\beta-1}\,dt \\
&= \frac{\beta}{\Gamma(1 - \beta)}\int_0^1\left(1 -e^{-t^\alpha}\right)t^{-\beta-1}\,dt + \frac{\beta}{\Gamma(1 - \beta)}\int_1^\infty \left(1 -e^{-t^\alpha}\right)t^{-\beta-1}\,dt
\end{align*}
Now it suffices to argue that both integrals on the right-hand side are finite. For $0 \leq t \leq 1$, we have 
\[
1 - e^{-t^\alpha} \leq t^{\alpha},
\]
so
\[
\int_0^1 \left(1 -e^{-t^\alpha}\right)t^{-\beta-1}\,dt \leq \int_0^1 t^{\alpha - \beta - 1}\, dt < \infty.
\]
because $\beta < \alpha$. For $t \geq 1$, we have 
\[
1 - e^{-t^\alpha} \leq 1,
\]
so
\[
\int_1^\infty \left(1 -e^{-t^\alpha}\right)t^{-\beta-1}\,dt \leq \int_1^\infty t^{-\beta-1}\,dt < \infty
\]
because $\beta > 0$. Therefore, $\mathbb{E}[S^\beta] < \infty$.
\end{proof}

\begin{proof}[Proof of Theorem \ref{thm: p=p0 reduction}] 
We will show the existence of a linear isometric embedding of $\ell^r(3)$ into an $L^1$ Bochner space with codomain space $\ell^p(3)$. 

Let
\[
\alpha = \frac{r}{p} \in (0, 1)
\]
and consider the probability space
\[
(\Omega, \, \mathcal{A}, \, \mathbb{P}) = ([0, \infty)^3, \, \mathcal{B}([0, \infty))^{\otimes3}, \, \nu_\alpha^{\otimes3})
\]
where $(S_1, S_2, S_3)$ is a random variable with this distribution. Stated more explicitly, $S_1, \, S_2, \, S_3$ are independent $\alpha$-stable random variables with the same distribution as $S$. For arbitrary $a_1, a_2, a_3 \geq 0$, we have
\begin{align}
a_1S_1 + a_2S_2 + a_3S_3 \overset{d}{=} (a_1^\alpha + a_2^\alpha + a_3^\alpha)^{1/\alpha}S. \label{eq: stable sum}
\end{align}
This can be seen by verifying the Laplace transforms of both sides and checking that they are equal:
\begin{align*}
\mathbb{E}[e^{-t(a_1S_1 + a_2S_2 + a_3S_3)}] = \prod_{j=1}^3 \mathbb{E}[e^{-ta_jS_j}] = \prod_{j=1}^3 e^{-(ta_j)^{\alpha}} = \exp\left(-t^\alpha\sum_{j=1}^3a_j^\alpha\right) = \mathbb{E}[e^{-t(a_1^\alpha + a_2^\alpha + a_3^\alpha)^{1/\alpha}S}],
\end{align*}
so both sides of \eqref{eq: stable sum} have the same distribution.

Define
\[
m = \mathbb{E}[S_j^{1/p}].
\]
Since $r > 1$, we have $1/p < \alpha$, so by Lemma \ref{lemma: finite mean} we have $m < \infty$. Moreover, since the specified law is not concentrated at zero, we have $m > 0$.

Now we define the operator 
\[
T: \ell^r(3) \to  L^1_{\ell^p(3)}(\Omega, \, \nu_\alpha^{\otimes3}),
\]
where each $x = (x_1, x_2, x_3) \in \ell^r(3)$ corresponds to the mapping $Tx: \Omega \to \ell^p(3)$ defined as
\[
(T x)(\omega) = \frac1m\left(S_1(\omega )^{1/p}x_1, S_2(\omega )^{1/p}x_2, S_3(\omega )^{1/p}x_3\right).
\]
It is easy to verify that $T$ is indeed linear. Moreover, for each fixed $x$, the $j$-th component is a measurable function in $\omega$, so $Tx$ is  measurable.

We claim that $T$ is an isometry. We have
\[
\lVert (T x)(\omega)\rVert_p = \frac1m\left(S_1(\omega) |x_1|^p + S_2(\omega)|x_2|^p + S_3(\omega)|x_3|^p\right)^{1/p},
\]
and applying \eqref{eq: stable sum} with $a_j = |x_j|^p$, we have
\begin{align*}
\sum_{j=1}^3 S_j |x_j|^p &\overset{d}{=} \left(\sum_{j=1}^3 \left(|x_j|^p\right)^\alpha\right)^{1/\alpha}S \\
&= \left(\sum_{j=1}^3 |x_j|^r\right)^{p/r}S \\
&= \lVert x \rVert_r^p S.
\end{align*}
So, by taking $p$-th roots, we get
\[
\left(\sum_{j=1}^3 S_j |x_j|^p\right)^{1/p} \overset{d}{=} \lVert x \rVert_rS^{1/p},
\]
and by taking expectations, we get
\[
\mathbb{E}\left[\left(\sum_{j=1}^3 S_j|x_j|^p\right)^{1/p}\right] = \lVert x \rVert_r \mathbb{E}[S^{1/p}] = m\lVert x \rVert_r.
\]
Thus,
\begin{align*}
\lVert Tx\rVert_{L^1(\Omega, \, \ell^p(3))} &= \int_\Omega \lVert (Tx)(\omega)\lVert_p \, d\,\mathbb{P}(\omega) \\
&= \int_\Omega \frac1m \left(\sum_{j=1}^3 S_j(\omega)|x_j|^p\right)^{1/p} \, d\,\mathbb{P}(\omega) \\
&= \frac1m \mathbb{E}\left[\left(\sum_{j=1}^3 S_j |x_j|^p\right)^{1/p}\right] = \lVert x \rVert_r.
\end{align*}
Hence, $T$ is a linear isometry, and there is a linear isometric embedding
\[
\ell^r(3) \hookrightarrow L^1_{\ell^p(3)}(\Omega, \, \nu_\alpha^{\otimes3}).
\]
Now if $\ell^p(3)$ is Hornich-Hlawka, then by Proposition \ref{prop: bochner hh}, the space $L^1_{\ell^p(3)}(\Omega, \, \nu_\alpha^{\otimes3})$ is also Hornich-Hlawka, so it follows that $\ell^r(3)$ too is Hornich-Hlawka.
\end{proof}

We conclude this subsection by proving the main reduction for Problem 1.

\begin{proof}[Proof of Theorem \ref{thm: endpoint}] The ``only if" direction is immediate, so we look at the ``if" direction. Assume $\ell^{p_0}(3)$ is Hornich-Hlawka. Corollary \ref{cor: p0} implies that $\ell^p(3)$ is Hornich-Hlawka for all $2 < p \leq p_0$. Corollary \ref{cor: n >= 3 hh} in turn implies that $\ell^p(N)$ is Hornich-Hlawka for all $2 < p \leq p_0$ and $N \geq 3$.
\end{proof}

\subsection{An endpoint conjecture} To reiterate the previous discussion, the only remaining regime to explore for the Hornich-Hlawka property of $\ell^p(N)$ is when ${N \geq 3}$ and $2 < p \leq p_0$. Corollary \ref{cor: n >= 3 hh} and Conjecture \ref{conj: lp0(3) not hh} would imply that $\ell^p(N)$ spaces in this regime are not Hornich-Hlawka. As evidence for Conjecture \ref{conj: lp0(3) not hh}, the authors of \cite{marinescu2025survey} cite a result of Dor \cite[Corollary to Theorem~1.5]{dor1976potentials}, which shows that $\ell^p(3)$ is not isometric to any subspace of $L^1(0, 1)$ for $p > 2$ (although, as discussed in Remark \ref{rem: false converse}, this does not preclude the possibility of $\ell^p(3)$ nevertheless being Hornich-Hlawka).

We propose the following endpoint conjecture, motivated primarily by informal exploratory numerical searches that did not yield a counterexample.

\begin{conjecture} \label{conj: lp0(3) hh}
The space $\ell^{p_0}(3)$ is Hornich-Hlawka.
\end{conjecture}

Theorem \ref{thm: endpoint} and Conjecture \ref{conj: lp0(3) hh} would imply that $\ell^p(N)$ is Hornich-Hlawka for all $N \geq 3$ and $2 < p \leq p_0$ and answer Problem 1 in the negative.

Independently of this work, there is an ongoing open collaborative project \cite{prove2me2026hlawka} for AI-assisted Lean formalizations. The project also conjectures that $\ell^p(N)$ is Hornich-Hlawka for $2 < p \leq p_0$, based on numerical evidence and the formalized statement of Theorem \ref{thm: all lpn} based on \cite{palomar-2026-09-25-000006-v1}. 

\subsection{Two convex-geometric reformulations of the Hornich-Hlawka inequality}

One may ask whether the Hornich-Hlawka property of a Banach space $X$ has a ``dual" form in relation to the dual space $X^*$. As it turns out, there are two such formulations admitting convex-geometric interpretations, one in terms of the intersection of four closed balls and another in terms of the intersection of \emph{symmetric lenses} as described below.

\begin{theorem} \label{thm: four ball equivalence}
Let $X$ be a Banach space, and let $X^*$ be its dual. The following statements are equivalent:

\begin{enumerate}[(i)]
    \item The space $X$ is Hornich-Hlawka;
    \item For all $\phi, \, \psi, \, \chi \in X^*$ with $\lVert \phi \rVert_{X^*}, \, \lVert \psi \rVert_{X^*}, \, \lVert \chi \rVert_{X^*} \leq 1$, denote the four centers
    \[
    c_1 = -\phi - \psi - \chi, \quad c_2 = \phi + \psi - \chi, \quad c_3 = \phi - \psi + \chi, \quad c_4 = -\phi + \psi + \chi.
    \] 
    We have
    \[
    \bigcap_{i=1}^4 \overline{B_{X^*}(c_i, \, 2)} \neq \emptyset;
    \]
    \item For all $\phi, \, \psi, \, \chi \in X^*$ with $\lVert \phi \rVert_{X^*}, \, \lVert \psi \rVert_{X^*}, \, \lVert \chi \rVert_{X^*} \leq 1$ and for all $f_1, \, f_2,\,  f_3, \, f_4 \in X$ such that
    \[
    f_1 + f_2 + f_3 + f_4 = 0,
    \]
    we have
    \[
    \sum_{i=1}^4 c_i(f_i) \leq 2\sum_{i=1}^4 \lVert f_i \rVert_X.
    \]
    \item For all $f_1, \, f_2, \, f_3, \, f_4 \in X$ such that
    \[
    f_1 + f_2 + f_3 + f_4 = 0,
    \]
    we have
    \[
    \lVert -f_1 + f_2 + f_3 - f_4 \rVert_X + \lVert -f_1 + f_2 - f_3 + f_4 \rVert_X + \lVert -f_1 -f_2 + f_3 + f_4 \rVert_X \leq 2\sum_{i=1}^4 \lVert f_i \rVert_X.
    \]
    \item For $a \in X^*$, define the symmetric lens $L(a) = B_{X^*}(a, \, 1) \cap B_{X^*}(-a, \, 1)$. If $a, b \in X^*$ satisfy 
    \[
    \lVert a + b \rVert_{X^*} \leq 1, \quad \lVert a - b \rVert_{X^*} \leq 1,
    \]
    then we have 
    \[
    B_{X^*}(0, \, 1) \subseteq L(a) + L(b),
    \]
    where the right-hand side is understood to mean the Minkowski sum of $L(a)$ and $L(b)$.
\end{enumerate}
\end{theorem}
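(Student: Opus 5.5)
The plan is to prove the chain (i) $\Leftrightarrow$ (iv) $\Leftrightarrow$ (iii) $\Leftrightarrow$ (ii) $\Leftrightarrow$ (v). Every step is either a linear change of variables or a duality argument. The only genuinely analytic step is (iii) $\Rightarrow$ (ii), for which I intend to use Lemma \ref{lemma: bounded below operators}.

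\emph{(i) $\Leftrightarrow$ (iv).} I will use the form \eqref{eq: uvw rewrite} of the Hornich-Hlawka inequality. Given $f_1,\dots,f_4$ with $\sum f_i=0$, set
\[
u=-f_1+f_2+f_3-f_4,\quad v=-f_1+f_2-f_3+f_4,\quad w=-f_1-f_2+f_3+f_4.
\]
Using $\sum f_i=0$ one checks $u=2(f_2+f_3)$, $v=2(f_2+f_4)$, $w=2(f_3+f_4)$. Inverting this, each $f_i$ equals $\pm\frac14(\sigma_1u+\sigma_2v+\sigma_3w)$ for the four patterns $\sigma\in\HH$; for example $u+v+w=4f_2$ and $-u-v-w=4f_1$ after reassigning signs. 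Conversely, every triple $(u,v,w)$ arises from such an $(f_i)$. Substituting, (iv) becomes $\lVert u\rVert+\lVert v\rVert+\lVert w\rVert\le \frac12\sum_{\sigma\in\HH}\lVert \sigma_1u+\sigma_2v+\sigma_3w\rVert$, which is exactly \eqref{eq: uvw rewrite}.

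\emph{(iii) $\Leftrightarrow$ (iv).} Expand $\sum_i c_i(f_i)$ and collect the coefficients of $\phi,\psi,\chi$. This gives
\[
\sum_i c_i(f_i)=\phi(-f_1+f_2+f_3-f_4)+\psi(-f_1+f_2-f_3+f_4)+\chi(-f_1-f_2+f_3+f_4).
\]
Taking the supremum over the dual unit ball in each of $\phi,\psi,\chi$ independently (Hahn--Banach) turns the left-hand side of (iii) into the left-hand side of (iv).

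\emph{(ii) $\Rightarrow$ (iii).} Pick $h$ in the intersection of the four balls. Since $\sum f_i=0$, we have $\sum c_i(f_i)=\sum (c_i-h)(f_i)\le 2\sum\lVert f_i\rVert$.

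\emph{(iii) $\Rightarrow$ (ii).} This is the main obstacle. I will rephrase (ii) as a lifting problem. Let $Z=\{(f_i)\in X^4:\sum f_i=0\}$ with the $\ell^1$-sum norm, and let $J:Z\hookrightarrow X^4$ be the inclusion. Fix the functional $\Lambda=(c_i)_i$ restricted to $Z$. Then (iii) says $\lVert \Lambda|_Z\rVert_{Z^*}\le 2$. I want an extension of $\Lambda|_Z$ to $X^4$ of the form $(c_i-h)_i$ with each $\lVert c_i-h\rVert\le 2$. Since $Z^\perp$ in $(X^*)^4=(X^4)^*$ consists exactly of the diagonal tuples $(h,h,h,h)$, this follows from Hahn--Banach: extend $\Lambda|_Z$ to $X^4$ with norm at most $2$ in the $\ell^\infty$-dual norm. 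The extension differs from $(c_i)$ by an element of $Z^\perp$, i.e.\ by some $(h,h,h,h)$. This is the content of Lemma \ref{lemma: bounded below operators} applied to $T=J$ with $C=1$. I will need to be careful that the dual of the $\ell^1$-sum is the $\ell^\infty$-sum, so that the norm bound applies to each component separately.

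\emph{(ii) $\Leftrightarrow$ (v).} Unwinding (v), a point $y$ with $\lVert y\rVert\le1$ lies in $L(a)+L(b)$ iff there is $q$ with $\lVert y-q\mp a\rVert\le 1$ and $\lVert q\mp b\rVert\le1$. In other words, the four unit balls centered at $y+a,\;y-a,\;b,\;-b$ must have a common point. On the other side, halve (ii) and translate all centers by $\chi/2$. The centers $c_1/2+\chi/2$ and $c_2/2+\chi/2$ become $\mp b$ with $b=(\phi+\psi)/2$. The centers $c_3/2+\chi/2$ and $c_4/2+\chi/2$ become $y\pm a$ with $y=\chi$ and $a=(\phi-\psi)/2$. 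Under this dictionary, $\lVert a+b\rVert=\lVert\phi\rVert$, $\lVert a-b\rVert=\lVert\psi\rVert$ and $\lVert y\rVert=\lVert\chi\rVert$. The map $(\phi,\psi,\chi)\leftrightarrow(a,b,y)$ is a bijection between the two sets of admissible data, so the two statements coincide.
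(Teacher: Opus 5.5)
Your proposal is correct, but it places the duality step differently from the paper. The paper applies Lemma \ref{lemma: bounded below operators} with $C=2$ to the operator $T(u,v,w)=(u+v+w,\,u+v-w,\,u-v+w,\,-u+v+w)$ from $\ell^1_3(X)$ to $\ell^1_4(X)$. There the Hornich--Hlawka inequality itself is the lower bound, so the paper gets (i) $\Rightarrow$ (ii) in one step. The cost is inverting $T^*\alpha=(\phi,\psi,\chi)$ through an auxiliary $\theta$. The loop is then closed by (ii) $\Rightarrow$ (iii) $\Rightarrow$ (iv) $\Rightarrow$ (i), and the last step needs only the substitution $f_4=-x-y-z$ to land on \eqref{eq: hh inequality}.

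You instead put all the Hornich--Hlawka content into the algebraic bijection (i) $\Leftrightarrow$ (iv). You then do a plain Hahn--Banach extension from the zero-sum subspace $Z$, whose annihilator is the diagonal. This gives the common point $h$ without solving any linear system. It also proves something slightly stronger and pointwise: for any fixed $c_1,\dots,c_4\in X^*$ and $R>0$, the balls $\overline{B_{X^*}(c_i,R)}$ meet if and only if $\sum_i c_i(f_i)\le R\sum_i\lVert f_i\rVert_X$ for every zero-sum quadruple.

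The two arguments are really the same extension in different coordinates. Under your change of variables, $T(u,v,w)=4(-f_1,f_2,f_3,f_4)$, so the range of $T$ is $Z$ with the sign of the first coordinate flipped. Your (ii) $\Leftrightarrow$ (v) dictionary is the paper's with $a$ and $b$ interchanged.

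Two small points. First, your example identity $u+v+w=4f_2$ is wrong; in fact $u+v+w=-4f_1$, $u+v-w=4f_2$, $u-v+w=4f_3$ and $-u+v+w=4f_4$. Your general claim is right, so the argument is unaffected. Second, to get $\lVert \Lambda|_Z\rVert_{Z^*}\le 2$ from the one-sided bound in (iii), you should note that $Z=-Z$.
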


We will call statements (ii) the \emph{four-ball intersection property} and (v) the \emph{symmetric lens property}.

Related Rademacher/Hanner forms appear in earlier work on Hornich-Hlawka inequalities, such as in \cite{honda1998rademacher}. However, neither the particular four-ball intersection property nor the symmetric lens property on the dual space have been explicitly stated in the literature to the author's best knowledge.

We note that we do not identify $X$ with its dual space $X^*$ in Theorem \ref{thm: four ball equivalence}; we do not require reflexivity of $X$. We also stress that Theorem \ref{thm: four ball equivalence} does not imply that $X$ is Hornich-Hlawka if and only if $X^*$ is Hornich-Hlawka.

Our strategy is to prove the implications (i) $\Longrightarrow$ (ii) $\Longrightarrow$ (iii) $\Longrightarrow$ (iv) $\Longrightarrow$ (i) and (ii) $\Longleftrightarrow$ (v). Our proof of the first implication (i) $\Longrightarrow$ (ii) invokes Lemma \ref{lemma: bounded below operators}.

\begin{proof}[Proof of Theorem \ref{thm: four ball equivalence}] 
\textbf{[(i) $\Longrightarrow$ (ii)]} Assume that $X$ satisfies the Hornich-Hlawka inequality \eqref{eq: uvw rewrite}. Define a linear operator $T: \ell_3^1(X) \to \ell_4^1(X)$ by
\[
T(u, \, v, \, w) = (u + v + w, \, u + v - w, \, u - v + w, \, -u + v + w).
\]
We see that \eqref{eq: uvw rewrite} can be rewritten as
\[
\lVert T(u, \, v, \, w) \rVert_{\ell_4^1(X)} \geq 2 \lVert (u, \, v, \, w) \rVert_{\ell_3^1(X)}.
\]
We identify the dual spaces
\[
\left(\ell_3^1(X)\right)^* = \ell_3^\infty(X^*), \quad \left(\ell_4^1(X)\right)^* = \ell_4^\infty(X^*)
\]
and apply Lemma \ref{lemma: bounded below operators} to obtain that for all $(\phi, \, \psi, \, \chi) \in \ell_3^\infty(X^*)$, there is $(\alpha_1, \, \alpha_2, \, \alpha_3, \, \alpha_4) \in \ell_4^\infty(X^*)$ such that $T^*(\alpha_1, \, \alpha_2, \, \alpha_3, \, \alpha_4) = (\phi,\, \psi, \, \chi)$ and
\[
\max_{1 \leq i \leq 4} \;\lVert \alpha_i\rVert_{X^*} \leq \frac12 \max\{\lVert \phi \rVert_{X^*}, \, \lVert \psi \rVert_{X^*}, \, \lVert \chi \rVert_{X^*} \}.
\]
In particular, if $\lVert \phi \rVert_{X^*}, \lVert \psi \rVert_{X^*},\, \lVert \chi \rVert_{X^*} \leq 1$, then 
\[
\max_{1 \leq i \leq 4} \; \lVert \alpha_i \rVert_{X^*} \leq \frac12.
\]
Writing out $T^*$, we have
\begin{align*}
\alpha_1 + \alpha_2 + \alpha_3 - \alpha_4 &= \phi, \\
\alpha_1 + \alpha_2 - \alpha_3 + \alpha_4 &= \psi, \\
\alpha_1 - \alpha_2 + \alpha_3 + \alpha_4 &= \chi.
\end{align*}
Introducing the variable $\theta = \alpha_1 - \alpha_2 - \alpha_3 - \alpha_4$, we have
\begin{align*}
\alpha_1 &= \frac{\phi + \psi + \chi + \theta}4, \\
\alpha_2 &= \frac{\phi + \psi - \chi - \theta}4, \\
\alpha_3 &= \frac{\phi - \psi + \chi - \theta}4, \\
\alpha_4 &= \frac{-\phi + \psi + \chi - \theta}4.
\end{align*}
This means that
\begin{align*}
\lVert \theta - c_1 \rVert_{X^*} = \lVert \phi + \psi + \chi + \theta \rVert_{X^*} = 4 \lVert \alpha_1 \rVert_{X^*} \leq 2, \\
\lVert c_2 - \theta \rVert_{X^*} = \lVert \phi + \psi - \chi - \theta \rVert_{X^*} = 4 \lVert \alpha_2 \rVert_{X^*} \leq 2, \\
\lVert c_3 - \theta \rVert_{X^*} = \lVert \phi - \psi + \chi - \theta \rVert_{X^*} = 4 \lVert \alpha_3 \rVert_{X^*} \leq 2, \\
\lVert c_4 - \theta \rVert_{X^*} = \lVert -\phi + \psi + \chi - \theta \rVert_{X^*} = 4 \lVert \alpha_4 \rVert_{X^*} \leq 2,
\end{align*}
so
\[
\theta \in \bigcap_{i=1}^4 \overline{B_{X^*}(c_i, \, 2)}
\]
and hence (ii) holds.

\textbf{[(ii) $\Longrightarrow$ (iii)]}
Assume (ii) holds and assume $f_1, \, f_2, \, f_3, \,f_4 \in X$ satisfy $f_1 + f_2 + f_3 + f_4 = 0$. Let
\[
\theta \in \bigcap_{i=1}^4 \overline{B_{X^*}(c_i, 2)}.
\]
We have that $c_i - \theta \in \overline{B_{X^*}(0, 2)}$, and we also have
\[
\sum_{i=1}^4 \theta(f_i) = \theta(0) = 0.
\]
Hence, by the dual-norm inequality we get
\[
\sum_{i=1}^4 c_i(f_i) = \sum_{i=1}^4 \left[\theta(f_i) + (c_i - \theta)(f_i)\right] = \sum_{i=1}^4 (c_i - \theta)(f_i) \leq \sum_{i=1}^4 \lVert c_i - \theta \rVert_{X^*} \lVert f_i \rVert_X  \leq 2\sum_{i=1}^4 \lVert f_i \rVert_X,
\]
which is exactly (iii).

\textbf{[(iii) $\Longrightarrow$ (iv)]} Assume (iii) holds. Expanding out and rearranging the left-hand side of the inequality in (iii), we get
\begin{align*}
\sum_{i=1}^4 c_i(f_i) =  \phi( -f_1 + f_2 + f_3 - f_4) + \psi(-f_1 + f_2 - f_3 + f_4) +\chi( -f_1 -f_2 + f_3 + f_4).
\end{align*} 
Define
\begin{align*}
F &= -f_1 + f_2 + f_3 - f_4, \\
G &=  -f_1 + f_2 - f_3 + f_4, \\
H &= -f_1 -f_2 + f_3 + f_4.
\end{align*}
By (iii), we have
\[
\phi(F) + \psi(G) + \chi(H) \leq 2 \sum_{i=1}^4 \lVert f_i \rVert_X.
\]
Since $\phi, \psi, \chi$ range independently over the unit ball of $X^*$, we have by duality
\begin{align*}
\lVert F \rVert_X  + \lVert G \rVert_X + \lVert H \rVert_X  &= \sup_{\lVert \phi \rVert_{X^*} \leq 1} \phi(F) + \sup_{\lVert \psi \rVert_{X^*} \leq 1} \psi(G) + \sup_{\lVert \chi \rVert_{X^*} \leq 1} \chi(H) \\
&= \sup_{\lVert \phi \rVert_{X^*}, \, \lVert \psi \rVert_{X^*}, \, \lVert \chi \rVert_{X^*} \leq 1} \left[\phi(F) + \psi(G) + \chi(H)\right] \leq 2 \sum_{i=1}^4 \lVert f_i \rVert_X,
\end{align*}
which is precisely (iv).

\textbf{[(iv) $\Longrightarrow$ (i)]} Let $f_1,\, f_2,\, f_3, \,f_4 \in X$ be a zero-sum quadruple. Writing
\[
f_1 = x, \quad f_2 = y, \quad f_3 = z, \quad f_4 = -x-y-z,
\]
(iv) becomes
\[
2\lVert y + z \rVert_X + 2\lVert z + x \rVert_X + 2\lVert x + y \rVert_X \leq 2\left(\lVert x \rVert_X + \lVert y \rVert_X + \lVert z \rVert_X + \lVert x + y + z \rVert_X\right).
\]
Dividing both sides by 2 gives us the Hornich-Hlawka inequality \eqref{eq: hh inequality}.

\textbf{[(ii) $\Longrightarrow$ (v)]} Assume (ii) holds. Let $\chi \in B_{X^*}(0, 1)$ and let $\phi = a + b$ and $\psi = a - b$. By the hypothesis of (v), both $\phi$ and $\psi$ have norm at most 1 in $X^*$. Also, we have
\[
c_1 = -2a - \chi, \quad c_2 = 2a - \chi, \quad c_3 = 2b + \chi, \quad c_4 = -2b + \chi.
\]
Let $\theta \in X^*$ be a point in the intersection of the four balls as in the conclusion of (ii), and define
\[
\xi = \frac{\chi + \theta}2, \quad \eta = \frac{\chi - \theta}2.
\]
We have
\[
\theta - c_1 = 2(a + \xi), \quad \theta - c_2 = 2(\xi - a), \quad \theta - c_3 = -2(b + \eta), \quad \theta - c_4 = 2(b - \eta),
\]
all of which have norm at most 2 in $X^*$. Hence,
\[
\lVert a + \xi \rVert_{X^*} \leq 1, \quad \lVert a - \xi \rVert_{X^*} \leq 1, \quad \lVert b + \eta \rVert_{X^*} \leq 1, \quad  \lVert b - \eta \rVert_{X^*} \leq 1.
\]
Thus $\xi \in L(a)$ and $\eta \in L(b)$, and we conclude
\[
\chi = \xi + \eta \in L(a) + L(b).
\]

\textbf{[(v) $\Longrightarrow$ (ii)]} Assume that (v) holds, and let $\phi, \psi, \chi \in X^*$ have norms at most 1. Denote 
\[
a = \frac{\phi + \psi}2, \quad b = \frac{\phi - \psi}2.
\]
We see that $a + b = \phi$ and $a - b = \psi$, so these have norms at most 1. Thus we conclude by (v) that $\chi \in L(a) + L(b)$, so there are $\xi \in L(a)$ and $\eta \in L(b)$ such that
$\chi = \xi + \eta$.

We claim that $\theta = \xi - \eta$ lies in the intersection of the four balls as in the conclusion of (ii). Indeed, we compute as before
\[
\theta - c_1 = 2(a + \xi), \quad \theta - c_2 = 2(\xi - a), \quad \theta - c_3 = -2(b + \eta), \quad \theta - c_4 = 2(b - \eta),
\]
all of which have norm at most 2 by construction.
\end{proof}

Although this subsection proceeded with full generality of the Banach space $X$ without added geometrical assumptions, the motivation for Theorem \ref{thm: four ball equivalence} is the hope that one could resolve Conjecture \ref{conj: lp0(3) hh} by casting the conjecture in terms of its dual space $\ell^{q_0}(3)$, where $1/p_0 + 1/q_0 = 1$. Though this direction seems promising, we currently lack an argument that $\ell^{q_0}(3)$ satisfies the four-ball intersection property or the symmetric lens property.

\section{Problem 2: A Banach space without a three-dimensional Hornich-Hlawka subspace} 
Let us first restate Problem 2 as a conjecture.

\begin{conjecture}\cite[Problem~2]{marinescu2025survey} \label{prob: far apart}
There is a Banach space $X$ such that every finite-dimensional subspace of $X$ with dimension greater than or equal to $3$ is not Hornich-Hlawka.
\end{conjecture}
Taken literally, Conjecture \ref{prob: far apart} is trivially true by taking $X$ to be a three-dimensional space that is not Hornich-Hlawka (e.g. $X = \ell^\infty(3)$). The problem becomes more interesting when we impose the additional hypothesis that $X$ is infinite-dimensional, or finite-dimensional with dimension at least 4. Indeed, Theorem \ref{thm: l^2 equivalent} answers the infinite-dimensional variant in the affirmative. The high-level intuition is that we construct the renorming on $\ell^2$ specifically such that every three-dimensional subspace has an $\ell^\infty$-type violation of Hornich-Hlawka.

For the rest of this section, we define for a seminorm $P$ on a Banach space $X$ the quantity
\[
\Phi_P(u, v, w) = P(u + v + w) + P(u + v - w) + P(u - v + w) + P(-u + v + w) - 2Pu - 2Pv - 2Pw,
\]
for $u, v, w \in X$.

\begin{remark} \label{rem: hh seminorm}
Extending the Hornich-Hlawka inequality to seminorms, we observe that \eqref{eq: uvw rewrite} on $X$ for the seminorm $P$ is equivalent to $\Phi_P(u, v, w) \geq 0$.
\end{remark}

We will first give a roadmap of the proof of Theorem \ref{thm: l^2 equivalent}.

\begin{proof}[Proof roadmap of Theorem \ref{thm: l^2 equivalent}]
We will fix a sequence $\{g_n\}_{n=1}^\infty$ dense on the unit sphere of $\ell^2$ and consider the countable collection of all ordered triples
\[
G_{ijk} = (g_i, g_j, g_k)
\]
whose members are linearly independent (where each $g_i$ is treated as a functional in the dual space $x \mapsto \langle x, g_i \rangle$). For every such triple $G_{ijk}$ and $1 < \tau < 2$, define
\[
M_{G_{ijk}, \tau}(x) = \max\left\{|\langle x, g_i \rangle|, \left|\frac{\langle x, g_j \rangle}{\tau}\right|, \left|\frac{\langle x, g_k \rangle}{\tau^2}\right|\right\}.
\]
We observe that each $M_{G_{ijk}, \tau}$ is a seminorm with $
M_{G_{ijk}, \tau}(x) \leq \lVert x \rVert_2$ for all $x \in \ell^2$, because each $g_n$ has norm 1 and $\tau > 1$.

Now enumerate the ordered triplets $G_{ijk}$ by reindexing them as $G_1, G_2, G_3, \ldots$ (this is possible since the collection of all $G_{ijk}$'s is countable). Consider the triangular schedule 
\[
G_1;
\]
\[
G_1, G_2;
\]
\[
G_1, G_2, G_3;
\]
\[
\vdots
\]
where each $G_n$ occurs infinitely often. We will call each occurrence of $G_n$ a \emph{visit to} $G_n$.

Our goal is to construct one global sequence $M_1, M_2, M_3, \ldots$ of seminorms of the form $M_{G_n, \tau}$ inductively based on this schedule; we give the explicit procedure as follows. At each visit to $G_n$ following this schedule, assume that we have already listed $N$ seminorms in the global sequence. Choose $L = 30N + 1$ distinct parameters 
\[
\tau_1, \ldots, \tau_L \in (1, 2)
\]
chosen differently from all parameters previously used, and append
\[
M_{G_n, \tau_1}, \ldots, M_{G_n, \tau_L}
\]
to the global sequence. Continuing this process indefinitely defines the global sequence $\{M_m\}_{m=1}^\infty$, where each \[
M_m = M_{G_{\sigma(m), \tau_m}}
\] corresponds to a pair $(G_{\sigma(m)}, \tau_m)$ (where the $\tau_m$'s are reindexed).

Introduce the numbers
\[
\varepsilon_m = 2^{-2^m}, \quad m \in \N,
\]
and for $x \in \ell^2$, define
\begin{align} \label{eq: x norm}
\lVert x \rVert_X = \lVert x \rVert_2 + \sum_{m=1}^\infty \varepsilon_m M_m(x).
\end{align}
Our proof will conclude that $\lVert x \rVert_X$ is a norm on $
\ell^2$ that is topologically equivalent to the usual norm, where the Hornich-Hlawka inequality is violated on every three-dimensional subspace of $X$ by Remark \ref{rem: hh seminorm}.
\end{proof}

We will now fill in the necessary details of the above proof roadmap. We will need many intermediate results; we will prove each of them explicitly.

\begin{lemma} \label{lemma: zero first variation}
Assume $P$ is a seminorm that is Fr\'echet differentiable on $X$ at $r \neq 0$. For a fixed $a, b, c \in X$, denote
\[
u_\delta = r + \delta a, \quad v_\delta = r + \delta b, \quad w_\delta = r + \delta c.
\]
Then at $r$,  the Fr\'echet derivative $DP$ is constant along positive rays. Moreover, the function $\Phi_P$ has zero first variation, i.e. for $\delta$ sufficiently small,
\[
\Phi_P(u_\delta , v_\delta , w_\delta ) = o(\delta).
\]
\end{lemma}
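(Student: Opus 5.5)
The proof has two parts: first I show that the derivative of $P$ is the same at every positive multiple of $r$, and then I Taylor-expand each of the seven terms of $\Phi_P$ at the points $\lambda r$ with $\lambda\in\{1,3\}$. Throughout, $DP(r)$ denotes the Fréchet derivative of $P$ at $r$. Differentiability of $P$ at $r$ implies differentiability at every $\lambda r$ with $\lambda>0$, because $P(\lambda r+h)=\lambda P(r+h/\lambda)$. Expanding the right-hand side gives
\[
\lambda P(r+h/\lambda)=\lambda P(r)+DP(r)h+o(\lVert h\rVert).
\]
Hence $DP(\lambda r)=DP(r)$, which is the claimed constancy of $DP$ along positive rays. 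By the symmetry $P(-x)=P(x)$, the same reasoning shows that $P$ is differentiable at $-r$ with $DP(-r)=-DP(r)$, although I expect not to need this. I also record Euler's relation $DP(r)r=P(r)$, obtained by differentiating $t\mapsto P(tr)=tP(r)$ at $t=1$. Write $\ell=DP(r)$.

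Next I compute the arguments of the seven terms of $\Phi_P$ at $(u_\delta,v_\delta,w_\delta)$.
\begin{itemize}
\item $u_\delta+v_\delta+w_\delta=3r+\delta(a+b+c)$.
\item $u_\delta+v_\delta-w_\delta=r+\delta(a+b-c)$.
\item $u_\delta-v_\delta+w_\delta=r+\delta(a-b+c)$.
\item $-u_\delta+v_\delta+w_\delta=r+\delta(-a+b+c)$.
\item $u_\delta=r+\delta a$, $v_\delta=r+\delta b$, and $w_\delta=r+\delta c$.
\end{itemize}
Every argument therefore has the form $\lambda r+\delta h$ with $\lambda\in\{1,3\}$, so differentiability at $r$ and at $3r$ gives
\[
P(\lambda r+\delta h)=\lambda P(r)+\delta\,\ell(h)+o(\delta),
\]
where I used $DP(3r)=\ell$ and fixed vectors $h$, so that $o(\delta\lVert h\rVert)=o(\delta)$.

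Summing the seven expansions with their coefficients, the zeroth-order terms give
\[
3P(r)+P(r)+P(r)+P(r)-2P(r)-2P(r)-2P(r)=0.
\]
The first-order coefficient is $\ell$ applied to
\[
(a+b+c)+(a+b-c)+(a-b+c)+(-a+b+c)-2a-2b-2c.
\]
This equals $2a+2b+2c-2a-2b-2c=0$, so the first-order term vanishes as well. Therefore $\Phi_P(u_\delta,v_\delta,w_\delta)=o(\delta)$ as $\delta\to0$, which is what the statement means by ``for $\delta$ sufficiently small''.

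The only step needing care is the first one. The term $P(u_\delta+v_\delta+w_\delta)$ is expanded at $3r$ rather than at $r$, so cancellation of the linear terms requires $DP(3r)=DP(r)$; the homogeneity argument supplies exactly this. The remaining steps are bookkeeping of coefficients.
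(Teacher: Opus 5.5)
Your proof is correct and follows essentially the same route as the paper: you derive $DP(\lambda r)=DP(r)$ from positive homogeneity, expand each of the seven terms at $r$ or $3r$, and check that the zeroth-order terms cancel and that the first-order terms cancel, since $\ell$ is applied to $2(a+b+c)-2(a+b+c)=0$. Your side remarks on $DP(-r)=-DP(r)$ and Euler's relation are correct but, as you expected, not needed.
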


\begin{proof}
First, we verify that
$ DP(\lambda r ) = DP(r) $
for all $\lambda > 0$. We have, for small $h$,
\[
P(r + h) = P(r) + DP(r)[h] + o(\lVert h \rVert),
\]
so by positive homogeneity of $P$,
\begin{align*}
P(\lambda r + h) &= \lambda P\left(r + \frac{h}{\lambda}\right) \\
&= \lambda P(r) + \lambda DP(r)\left[\frac{h}{\lambda}\right] + \lambda o\left(\frac{\lVert h \rVert}{\lambda}\right) \\
&= P(\lambda r) + DP(r)[h] + o(\lVert h \rVert)
\end{align*}
so we conclude that $DP(\lambda r) = DP(r)$ for $\lambda > 0$. In particular, we have that $L := DP(r) = DP(3r)$.

Now we compute
\begin{align*}
&u_\delta + v_\delta + w_\delta = 3r + \delta(a + b + c), \quad  &u_\delta + v_\delta - w_\delta = r + \delta(a + b - c), \\
&u_\delta - v_\delta + w_\delta = r + \delta(a - b + c), \quad &-u_\delta + v_\delta + w_\delta = r + \delta(-a + b + c).
\end{align*}
This means that the total first-order contribution of the perturbation appearing in
\[
P(u_\delta + v_\delta + w_\delta) + P(u_\delta + v_\delta - w_\delta) + P(u_\delta - v_\delta + w_\delta) + P(-u_\delta + v_\delta + w_\delta)
\]
is
\begin{align*}
L(a + b + c) + L(a + b - c) + L(a - b + c) + L(-a + b + c) = 2L(a + b + c)
\end{align*}
by linearity of $L$. Meanwhile, the total first-order contribution of the terms
\[
-2P(u_\delta) -2P(v_\delta) -2P(w_\delta) 
\]
is 
\[
-2L(a) - 2L(b) - 2L(c) = -2L(a + b + c),
\]
so the first-order contributions in $\Phi_P$ cancel out. The constant terms also cancel out:
\[
P(3r) + 3P(r) - 3 \cdot 2P(r) = 0.
\]
\end{proof}

\begin{lemma} \label{lemma: max cusp}
Let $E \subseteq \ell^2$ be a three-dimensional subspace and let $F: E \to \R^3$ be a linear isomorphism. Define
\[
M : E \to \R,\quad M(x) = \lVert F(x)\rVert_\infty.
\]
Let
\[
r = F^{-1}(1, 1, 1), \quad a = F^{-1}e_1, \quad b = F^{-1}e_2, \quad c = F^{-1}e_3
\]
and for $0 < \delta < 1$ denote
\[
u_\delta = r + \delta a, \quad v_\delta = r + \delta b, \quad w_\delta = r + \delta c.
\]
Then $M$ is a seminorm on $E$ with
\[
\Phi_M(u_\delta, v_\delta,  w_\delta) = -2\delta.
\]
\end{lemma}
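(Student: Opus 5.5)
Since $F$ is linear and $\lVert\cdot\rVert_\infty$ is a norm on $\R^3$, the seminorm claim is immediate: $M(x+y)=\lVert F x+Fy\rVert_\infty\le M(x)+M(y)$ and $M(\lambda x)=|\lambda|M(x)$. In fact $M$ is a norm on $E$, because $F$ is injective. For the identity, I would pull everything back to $\R^3$ by linearity of $F$. The triple $(u_\delta,v_\delta,w_\delta)$ corresponds to
\[
F u_\delta=(1+\delta,1,1),\quad F v_\delta=(1,1+\delta,1),\quad F w_\delta=(1,1,1+\delta),
\]
so $\Phi_M(u_\delta,v_\delta,w_\delta)$ equals the same expression built from $\lVert\cdot\rVert_\infty$ on these three vectors of $\R^3$.

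Next I would compute the seven sup-norms explicitly for $0<\delta<1$.
\begin{itemize}
\item \textbf{Single vectors.} Each of $Fu_\delta$, $Fv_\delta$, $Fw_\delta$ has sup-norm $1+\delta$.
\item \textbf{Sum of all three.} $F(u_\delta+v_\delta+w_\delta)=(3+\delta,3+\delta,3+\delta)$, with norm $3+\delta$.
\item \textbf{Mixed signs.} $F(u_\delta+v_\delta-w_\delta)=(1+\delta,1+\delta,1-\delta)$, with norm $1+\delta$ since $\delta>0$. By symmetry the other two mixed-sign vectors, $(1+\delta,1-\delta,1+\delta)$ and $(1-\delta,1+\delta,1+\delta)$, also have norm $1+\delta$.
\end{itemize}
Hence
\[
\Phi_M=(3+\delta)+3(1+\delta)-6(1+\delta)=-2\delta .
\]

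There is no real obstacle. The only care needed is to keep all coordinates positive and to identify which coordinate attains the maximum. The hypothesis $0<\delta<1$ guarantees both: $1-\delta>0$ and $1+\delta>1-\delta$. This is worth noting because, in contrast with Lemma \ref{lemma: zero first variation}, $M$ is not differentiable at $r$ (it has a cusp there), and that is what produces the nonzero first-order term $-2\delta$.
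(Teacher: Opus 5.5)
Your proposal is correct and follows the paper's proof: pull back through $F$, compute the seven sup-norms of the explicit vectors in $\R^3$, and sum to get $(3+\delta)+3(1+\delta)-6(1+\delta)=-2\delta$. One minor aside: the bound $|1-\delta|\le 1+\delta$ holds for every $\delta\ge 0$, so you do not actually need $1-\delta>0$ to identify the maximum.
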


\begin{proof}
It is easy to verify that $M$ is a seminorm. We have
\[
F(u_\delta) = (1 + \delta, 1, 1), \quad F(v_\delta) = (1, 1 + \delta, 1), \quad F(w_\delta) = (1, 1, 1 + \delta),
\]
so
\[
M(u_\delta) = M(v_\delta) = M(w_\delta) = 1 + \delta,
\]
and moreover
\[
M(u_\delta + v_\delta + w_\delta) = \lVert (3 + \delta, 3 + \delta, 3 + \delta)\rVert_\infty = 3 + \delta.
\]
On the other hand,
\[
M(u_\delta + v_\delta - w_\delta) = \lVert (1 + \delta, 1 + \delta, 1 - \delta )\rVert_\infty = 1 + \delta
\]
and likewise $M(u_\delta - v_\delta + w_\delta) = M(-u_\delta + v_\delta + w_\delta) = 1 + \delta$. Therefore,
\[
\Phi_M(u_\delta, v_\delta, w_\delta) = 3(1 + \delta) + (3 + \delta) - 6(1 + \delta) = -2\delta.
\]
\end{proof}

\begin{lemma} \label{lemma: restrictions dense}
Let $\{g_n\}_{n=1}^\infty$ be a sequence dense in the unit sphere of $H = \ell^2$, again identifying each $g_n$ with the functional $
x \; \mapsto \; \langle x, g_n \rangle  $
in the dual space. Then the restrictions $g_n |_{E} \in E^*$ span $E^*$ for every three-dimensional subspace $E \subseteq \ell^2$.
\end{lemma}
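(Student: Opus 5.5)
We argue by contradiction, using the Riesz representation on the finite-dimensional subspace $E$. Suppose the restrictions $g_n|_E$ span a proper subspace $W \subsetneq E^*$. Since $E^*$ is three-dimensional and $W$ is proper, there is a nonzero $x_0 \in E$ annihilated by $W$. To produce it, pick a nonzero functional on $E^*$ vanishing on $W$; by finite-dimensional reflexivity, every functional on $E^*$ is evaluation at some $x_0 \in E$. Thus $\langle x_0, g_n\rangle = 0$ for every $n$, and after rescaling we may assume $\lVert x_0\rVert_2 = 1$.

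Next we use density. The vector $x_0$ lies on the unit sphere of $\ell^2$, so by density there is a subsequence $g_{n_k} \to x_0$ in $\ell^2$. By continuity of the inner product,
\[
1 = \langle x_0, x_0\rangle = \lim_{k\to\infty} \langle x_0, g_{n_k}\rangle = 0,
\]
which is a contradiction. Hence the restrictions $g_n|_E$ span $E^*$.

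An equivalent formulation avoids duality language altogether. Consider the linear map $E \to \R^{\N}$ given by $x \mapsto (\langle x, g_n\rangle)_n$; it is injective by the density argument above. The span of the $g_n|_E$ is a subspace of $E^*$, and its annihilator in $E$ is exactly the kernel of this map, which is trivial. Therefore the span has dimension $3 = \dim E^*$.

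We expect no real obstacle here. The only point needing care is the step from a proper subspace of $E^*$ to a nonzero annihilating vector in $E$, which is elementary finite-dimensional linear algebra. For the later construction it will also help to note that, since $E^*$ is three-dimensional, finitely many (in fact three) of the $g_n|_E$ already form a basis. We would record this refinement, because it is what makes some triple $G_{ijk}$ restrict to a linear isomorphism $F : E \to \R^3$ as required in Lemma~\ref{lemma: max cusp}.
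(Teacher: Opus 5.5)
Your proposal is correct and follows essentially the same argument as the paper: a nonzero common annihilator $x_0\in E$ of all $g_n|_E$ contradicts density of $\{g_n\}$ in the unit sphere. The only difference is cosmetic. You approximate $x_0/\lVert x_0\rVert_2$ directly by the $g_n$, whereas the paper first extends $\langle x, h\rangle = 0$ to all unit vectors $h$ and then takes $h = x/\lVert x\rVert_2$.
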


\begin{proof}
Suppose not; the common annihilator of every $g_n|_E$ contains a nonzero $x \in E$. Thus $\langle x, g_n \rangle = 0$ for every $n \in \N$. By continuity of the inner product and density of $\{g_n\}_{n=1}^\infty$, we have $\langle x, h \rangle = 0$ for every unit vector $h \in \ell^2$. But 
\[
\left\langle x, \frac{x}{\lVert x \rVert_2}\right\rangle  = \lVert x \rVert_2 \neq 0,
\]
so this is a contradiction.
\end{proof}

\begin{lemma}\label{lemma: 15 planes}
Let $E$ be a three-dimensional Banach space and let $\phi, \psi, \chi \in E^*$. For $x \in E$, let
\[
M(x) = \max\{|\phi(x)|, |\psi(x)|, |\chi(x)|\}.
\]
Then the set of points on which $M$ is not Fr\'echet differentiable is contained in a union of at most 15 proper planes through the origin.
\end{lemma}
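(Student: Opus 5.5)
The plan is to show that, away from an explicit finite union of kernels of nonzero functionals, $M$ coincides near each point with one smooth (indeed linear, up to sign) function, and hence is Fréchet differentiable there. Since $E$ is three-dimensional, all norms on it are equivalent and Fréchet differentiability is the same as ordinary differentiability of a function on $\R^3$.

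Consider the family of six functionals
\[
\phi,\ \psi,\ \chi,\ \phi-\psi,\ \phi+\psi,\ \psi-\chi,\ \psi+\chi,\ \phi-\chi,\ \phi+\chi,
\]
which has nine members. Keep only those that are nonzero; their kernels are proper planes through the origin, and there are at most $9 \leq 15$ of them. The slack up to 15 leaves room for a cruder count if needed. For example, one can also include degenerate duplicates, or treat the sign-change sets $\{\phi=0\}$, $\{\psi=0\}$, $\{\chi=0\}$ separately from the tie sets $\{|\phi|=|\psi|\}$ and the others. Each tie set $\{|\phi|=|\psi|\}$ is exactly $\ker(\phi-\psi)\cup\ker(\phi+\psi)$.

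Fix $x_0$ outside the union $Z$ of these planes. Take first the case where some functional among $\phi,\psi,\chi$ is identically zero, or two of them satisfy $\phi=\pm\psi$. Such functionals contribute nothing new, and they can either be discarded from the maximum or merged, because $|\phi|=|\psi|$ everywhere in that case. So assume we work with the reduced list of pairwise non-proportional-by-$\pm1$ nonzero functionals. At $x_0$ the values $|\phi(x_0)|,|\psi(x_0)|,|\chi(x_0)|$ are then pairwise distinct, because $x_0\notin\ker(\phi\mp\psi)$ and similarly for the other pairs. Hence the maximum is attained by a unique functional, say $\phi$. Moreover $\phi(x_0)\neq 0$: otherwise the maximum would be $0$, forcing all three values to be $0$ and hence equal, which contradicts distinctness when at least two functionals survive. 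If only one survives, $x_0\notin\ker\phi$ directly.

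By continuity, the strict inequalities $|\phi|>|\psi|$ and $|\phi|>|\chi|$, together with the sign of $\phi$, persist on a neighbourhood $U$ of $x_0$. On $U$ we therefore have $M=\operatorname{sgn}(\phi(x_0))\,\phi$, which is linear and so Fréchet differentiable at $x_0$. Consequently the non-differentiability set lies in $Z$, a union of at most 15 proper planes through the origin.

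The only delicate point is the degenerate bookkeeping. When some functionals vanish or coincide up to sign, the corresponding "kernel" would be all of $E$ and would not be a proper plane, so those members must be discarded before forming $Z$. I would handle this by first reducing to a list of distinct nonzero functionals modulo sign, and then checking that the argument above still applies verbatim.
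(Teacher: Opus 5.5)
Your proof is correct and is essentially the paper's argument: where a single functional strictly dominates, $M$ is locally that linear functional (up to sign), so the non-differentiability set lies in the tie sets, which are kernels of nonzero functionals. The paper gets there by writing $M=\max_{\lambda\in\{\pm\phi,\pm\psi,\pm\chi\}}\lambda$ with repetitions removed, which makes every $\ker(\lambda-\mu)$ automatically proper and avoids your separate degenerate-case bookkeeping. Its $\binom{6}{2}=15$ pairs collapse to the same 9 distinct planes you count; note also the slip of calling your nine-member family ``six.''
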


\begin{proof}
For notational convenience, denote
\[
\mathcal{A} = \{\phi, -\phi, \psi, -\psi, \chi, -\chi\}
\]
and write
\[
M(x) = \max_{\lambda \in \mathcal{A}} \lambda(x).
\]
We remove repetitions from $ \mathcal{A}$. If exactly one member of $\mathcal{A}$ attains the maximum at $x \in E$, then $M$ agrees with that functional on a neighborhood of $x$, and is therefore differentiable there. Consequently, $M$ is non-differentiable only when two distinct members of $\mathcal{A}$ agree there.

Assume distinct $\lambda, \mu \in \mathcal{A}$ satisfy $\lambda(x) = \mu(x)$. This equality set is the proper plane
\[
\ker (\lambda - \mu),
\]
of which there are at most $\binom{6}2 = 15$, corresponding to possible pairs $(\lambda, \mu)$. Hence, all points at which $M$ is non-differentiable is contained in the union of these planes.
\end{proof}

\begin{proposition} \label{prop: arbitrary index}
Let $E \subseteq \ell^2$ be a three-dimensional subspace. Choose an ordered triple $G = (g_i, g_j, g_k)$ such that the restricted map
\[
G_E: E \to \R^3, \quad G_E(x) = \left(\langle x, g_i \rangle, \langle x, g_j \rangle, \langle x, g_k \rangle\right)
\]
is an isomorphism. In the global sequence of seminorms as in the proof roadmap of Theorem \ref{thm: l^2 equivalent}, there exist arbitrarily large indices $m \in \N$ with the following properties:
\begin{enumerate}
    \item $M_m = M_{G, \tau}$ for some $\tau \in (1, 2)$;
    \item If we define
    \[
    F_\tau(x) = \left(\langle x, g_i \rangle, \frac{\langle x, g_j \rangle}{\tau}, \frac{\langle x, g_k \rangle}{\tau^2}\right), \quad M_{G, \tau}(x) = \lVert F_\tau(x) \rVert_\infty,  \quad r_\tau = G_E^{-1}(1, \tau, \tau^2),
    \]
    then every restriction $M_k|_E$, $k < m$, is Fr\'echet differentiable at $r_\tau$.
    \item If
    \[
    a_\tau = F_\tau^{-1}e_1, \quad b_\tau = F_\tau^{-1}e_2, \quad c_\tau = F_\tau^{-1}e_3,
    \]
    then there is a constant $C_E < \infty$ independent of $m$ and $\tau$ such that
    \[
    \lVert a_\tau \rVert_2 + \lVert b_\tau \rVert_2 + \lVert c_\tau \rVert_2 \leq C_E.
    \]
\end{enumerate}
\end{proposition}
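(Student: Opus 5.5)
The plan is to use the fact that $G$ is visited infinitely often in the triangular schedule. At each visit, a counting argument shows that at least one of the $L = 30N+1$ freshly chosen parameters $\tau$ places $r_\tau$ off every non-differentiability plane of the earlier seminorms.

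\textbf{Step 1: $G$ occurs in the enumeration.} If the restrictions of $g_i, g_j, g_k$ to $E$ are linearly independent in $E^*$ (which is what the isomorphism hypothesis on $G_E$ says), then $g_i, g_j, g_k$ are linearly independent as functionals on $\ell^2$. Hence $G = G_n$ for some $n$, and $G_n$ is visited infinitely often. The global sequence strictly grows at each visit, so the indices of the seminorms appended at later visits are unbounded. It therefore suffices to show that at \emph{every} visit to $G$ at least one of the appended indices $m$ has properties (1)--(3). Property (1) then holds by construction.

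\textbf{Step 2: property (3), done first because it is trivial.} On $E$ we have $F_\tau = D_\tau \circ G_E$ with $D_\tau = \mathrm{diag}(1, \tau^{-1}, \tau^{-2})$. Therefore
\[
a_\tau = G_E^{-1}(1,0,0), \quad b_\tau = G_E^{-1}(0,\tau,0), \quad c_\tau = G_E^{-1}(0,0,\tau^2).
\]
Since $\tau < 2$, this gives
\[
\lVert a_\tau \rVert_2 + \lVert b_\tau \rVert_2 + \lVert c_\tau \rVert_2 \leq 7\lVert G_E^{-1}\rVert =: C_E,
\]
which is independent of $m$ and $\tau$. We also note that $F_\tau(r_\tau) = (1,1,1)$, consistent with Lemma \ref{lemma: max cusp}.

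\textbf{Step 3: property (2), by counting.} Fix a visit at which $N$ seminorms $M_1, \dots, M_N$ already exist, and let $\tau_1, \dots, \tau_L$ be the new parameters. Each restriction $M_k|_E$ with $k \leq N$ is a max of absolute values of three functionals in $E^*$. By Lemma \ref{lemma: 15 planes}, $M_k|_E$ fails to be differentiable only on at most $15$ proper planes through the origin of $E$.

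Here one must remove repetitions at the level of the \emph{restricted} functionals. Then each $\lambda - \mu$ is nonzero on $E$, so each $\ker(\lambda-\mu)$ really is a proper plane. (If $M_k|_E \equiv 0$ there is nothing to check.)

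A proper plane in $E$ has the form $G_E^{-1}\{y : \alpha y_1 + \beta y_2 + \gamma y_3 = 0\}$ with $(\alpha,\beta,\gamma) \neq 0$. The point $r_\tau = G_E^{-1}(1,\tau,\tau^2)$ lies on it exactly when
\[
\alpha + \beta\tau + \gamma\tau^2 = 0,
\]
a nonzero polynomial of degree at most $2$ in $\tau$, which has at most $2$ roots. So at most $15 \cdot 2 \cdot N = 30N$ values of $\tau$ are bad for the old seminorms. Since $L = 30N+1$ and the $\tau_\ell$ are distinct, some $\tau_\ell$ is good for all of $M_1, \dots, M_N$.

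\textbf{Step 4: the seminorms from the same visit.} The hardest bookkeeping point is that the index $m$ of $M_{G,\tau_\ell}$ may be preceded by other seminorms $M_{G,\tau_{\ell'}}$, $\ell' < \ell$, appended during the same visit. These must also be differentiable at $r_{\tau_\ell}$. I would handle them directly. We have
\[
F_{\tau'}(r_\tau) = \bigl(1, \tau/\tau', (\tau/\tau')^2\bigr).
\]
For $\tau \neq \tau'$ these three entries are positive and pairwise distinct, so exactly one of the six functionals $\pm$(coordinates of $F_{\tau'}$) attains the maximum. By the argument in the proof of Lemma \ref{lemma: 15 planes}, $M_{G,\tau'}|_E$ is then locally linear, hence Fréchet differentiable, at $r_\tau$.

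Taking $m$ to be the index of $M_{G,\tau_\ell}$ for the good $\tau_\ell$ completes the argument at this visit. Step 1 then yields arbitrarily large such $m$.
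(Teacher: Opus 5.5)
Your proposal is correct and follows essentially the same route as the paper: the same $15N$-plane, $30N$-root counting at each visit to $G$, the same direct check that same-batch seminorms $M_{G,\sigma}$ have a unique maximizing coordinate at $r_\tau$, and the same scaling bound via $F_\tau^{-1} = G_E^{-1}\circ \mathrm{diag}(1,\tau,\tau^2)$. Your extra remarks (that $G$ genuinely occurs in the enumeration, and that repetitions must be removed among the restricted functionals so the planes are proper) only make explicit points the paper leaves implicit.
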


\begin{proof}
Consider a visit to $G$ as in the proof roadmap of Theorem \ref{thm: l^2 equivalent}, and assume $N$ seminorms have been listed in the global sequence prior to that visit. By Lemma \ref{lemma: 15 planes}, the nondifferentiability sets of the restrictions
\[
M_1|_E, \ldots, M_N|_E
\]
are together contained in a union of at most $15N$ proper planes of $E$. Under the coordinates supplied by $G_E$, a proper plane has the algebraic equation
\[
\alpha x_1 + \beta x_2 + \gamma x_3 = 0
\]
for some constants $\alpha, \beta, \gamma$ not all zero.

We also have $G_E(r_\tau) = (1, \tau, \tau^2)$. Hence, the plane contains $r_\tau$ only when
\[
\alpha + \beta \tau + \gamma \tau^2 = 0.
\]
The polynomial on the left-hand side is nonzero and has at most two real roots. Thus, each plane excludes at most two candidate parameters for $\tau$, so the union of at most $15N$ planes excludes at most $30N$ candidate parameters. Since the visit to $G$ supplies $L = 30N + 1$ additional choices for $(G, \tau)$, at least one such candidate has the property that every one of
\[
M_1|_E, \ldots, M_N|_E
\]
is differentiable at $r_\tau$. The existence of the candidate resolves (1).

Now assume a different parameter $\sigma \neq \tau$ from the same batch occurs before the selected $\tau$. We have
\[
F_\sigma(r_\tau) =
\left(
1,\frac{\tau}{\sigma},
\left(\frac{\tau}{\sigma}\right)^2
\right).
\]
If $\sigma > \tau$, then the first coordinate is uniquely largest out of all three coordinates. Likewise, if $\sigma < \tau$, then the third coordinate is uniquely largest. Hence, the seminorm 
\[
M_{G, \sigma}|_E(x) = \lVert F_\sigma(x) \rVert_\infty
\]
is differentiable at $x = r_\tau$. Thus, every seminorm in the global sequence with index below $m$ is differentiable at $r_\tau$, satisfying (2).

Next, we see that
\[
F_\tau^{-1}(x_1, x_2, x_3) = G_E^{-1}(x_1, \tau x_2, \tau^2 x_3).
\]
Since $G_E$ is an isomorphism and $1 < \tau < 2$, we get
\begin{align*}
\lVert a_\tau \rVert_2 + \lVert b_\tau \rVert_2 + \lVert c_\tau \rVert_2 &= \lVert G_E^{-1}e_1 \rVert_2 + \lVert \tau G_E^{-1}e_2 \rVert_2 + \lVert \tau^2 G_E^{-1}e_3 \rVert_2  \\
&\leq \lVert G_E^{-1}e_1 \rVert_2 + 2\lVert G_E^{-1}e_2 \rVert_2 + 4\lVert  G_E^{-1}e_3 \rVert_2,
\end{align*}
and the right-hand side is a finite constant independent of $m$ and $\tau$, as required in (3).

Finally, since visits to $G$ happen infinitely often, we may take $m$ to be arbitrarily large.
\end{proof}

\begin{lemma} \label{lemma: tail estimate}
For the constants $\varepsilon_m$ as defined in the proof roadmap of Theorem \ref{thm: l^2 equivalent}, we have
\[
\sum_{m=1}^\infty \varepsilon_m < \infty \quad \text{ and } \quad \lim_{m \to \infty} \frac{\sum_{k=m + 1}^\infty \varepsilon_k}{\varepsilon_m} = 0.
\]
\end{lemma}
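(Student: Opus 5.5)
The plan is to compare the whole tail directly to its first term, using the doubly exponential decay of $\varepsilon_m = 2^{-2^m}$. The key identity is
\[
\varepsilon_{k+1} = 2^{-2^{k+1}} = \left(2^{-2^k}\right)^2 = \varepsilon_k^2 .
\]
Since $\varepsilon_k \leq \varepsilon_1 = 1/4$ for all $k \geq 1$, this gives $\varepsilon_{k+1} \leq \frac14 \varepsilon_k$. Iterating, $\varepsilon_k \leq 4^{-(k-1)}\varepsilon_1$, so the first claim $\sum_m \varepsilon_m < \infty$ follows by comparison with a geometric series. More crudely, $\varepsilon_m \leq 2^{-m}$ since $2^m \geq m$.

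For the ratio, I will bound the tail starting at $m+1$ by a geometric series with ratio $1/4$ anchored at $\varepsilon_{m+1}$:
\[
\sum_{k=m+1}^\infty \varepsilon_k \leq \varepsilon_{m+1}\sum_{j=0}^\infty 4^{-j} = \frac43 \varepsilon_{m+1} = \frac43 \varepsilon_m^2 .
\]
Dividing by $\varepsilon_m > 0$ gives
\[
0 \leq \frac{\sum_{k=m+1}^\infty \varepsilon_k}{\varepsilon_m} \leq \frac43 \varepsilon_m = \frac43\, 2^{-2^m} \longrightarrow 0 .
\]
This proves the limit statement.

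No step here is a real obstacle. The only point needing care is making the geometric comparison uniform in $k$, and the bound $\varepsilon_k \leq 1/4$ for $k \geq 1$ handles that. It is worth noting in the text that the squaring relation is stronger than necessary: any ratio $\varepsilon_{k+1}/\varepsilon_k \to 0$ would suffice. The doubly exponential choice is presumably made so that later the tail can be dominated by $\varepsilon_m$ times any fixed constant such as $C_E$ from Proposition \ref{prop: arbitrary index}.
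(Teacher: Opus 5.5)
Your proof is correct and is essentially the paper's argument: both dominate the tail by a geometric series and get a ratio bound of order $\varepsilon_m$. The paper uses $2^j-1\geq j$ to get $\sum_{j\ge1}2^{-j2^m}=\varepsilon_m/(1-\varepsilon_m)$, whereas you use $\varepsilon_{k+1}=\varepsilon_k^2\le\tfrac14\varepsilon_k$ to get $\tfrac43\varepsilon_m$, an equivalent bound.
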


\begin{proof}
Both claims can be verified by observing that $2^j \geq j + 1$ for all $j \in \N$. For the first claim, we have
\[
\sum_{m=1}^\infty \varepsilon_m \leq \sum_{m=1}^\infty 2^{-m-1} < \infty.
\]
For the second claim, we have
\[
\frac{\sum_{k=m + 1}^\infty \varepsilon_k}{\varepsilon_m} = \sum_{j=1}^\infty 2^{-2^m(2^j - 1)} \leq \sum_{j=1}^\infty 2^{-j 2^m} = \frac{2^{-2^m}}{1 - 2^{-2^m}} \xrightarrow{m \to \infty} 0.
\]
\end{proof}

\begin{lemma} \label{lemma: lipschitz}
Every $M_m$ as defined in the proof roadmap of Theorem \ref{thm: l^2 equivalent} is 1-Lipschitz with respect to $\lVert \cdot \rVert_2$, i.e. for all $x, y \in \ell^2$, we have
\[
|M_m(x) - M_m(y)| \leq \lVert x - y \rVert_2.
\]
Moreover, if we let $\delta \geq 0$ and $r, a, b, c \in \ell^2$ and define 
\[
u_\delta = r + \delta a, \quad v_\delta = r + \delta b, \quad w_\delta = r + \delta c,
\]
then
\[
|\Phi_{M_m}(u_\delta, v_\delta, w_\delta)| \leq 6\delta\left(\lVert a \rVert_2 + \lVert b \rVert_2 + \lVert c \rVert_2 \right).
\]
\end{lemma}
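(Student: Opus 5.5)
The plan has two parts: prove that each $M_m$ is $1$-Lipschitz, then control $\Phi_{M_m}$ at a perturbed point by comparing it with the unperturbed one.

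\textbf{Lipschitz bound.} Write $M_m = M_{G,\tau}$ with $G=(g_i,g_j,g_k)$ and $\tau\in(1,2)$. Each coordinate functional
\[
x\mapsto \langle x,g_i\rangle,\quad x\mapsto \langle x,g_j\rangle/\tau,\quad x\mapsto \langle x,g_k\rangle/\tau^2
\]
is linear with operator norm at most $1$ in $\ell^2$, because $\lVert g_n\rVert_2=1$ and $\tau>1$. Hence each coordinate is $1$-Lipschitz. A maximum of absolute values of $1$-Lipschitz functions is $1$-Lipschitz, via $\bigl||s|-|t|\bigr|\le|s-t|$ and $|\max_\ell s_\ell-\max_\ell t_\ell|\le\max_\ell|s_\ell-t_\ell|$. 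So $|M_m(x)-M_m(y)|\le\lVert x-y\rVert_2$.

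\textbf{Reduction at $\delta=0$.} Set $P=M_m$. When $\delta=0$ we have $u_0=v_0=w_0=r$, and by positive homogeneity and evenness of the seminorm,
\[
\Phi_P(r,r,r)=P(3r)+3P(r)-6P(r)=0.
\]
It therefore suffices to bound $|\Phi_P(u_\delta,v_\delta,w_\delta)-\Phi_P(r,r,r)|$ term by term using the Lipschitz property.

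\textbf{Term-by-term bound.} The arguments of the four positive terms differ from $3r, r, r, r$ by $\delta(a+b+c)$, $\delta(a+b-c)$, $\delta(a-b+c)$ and $\delta(-a+b+c)$ respectively. Each of these has $\ell^2$-norm at most $\delta S$, where $S=\lVert a\rVert_2+\lVert b\rVert_2+\lVert c\rVert_2$, so the four positive terms contribute at most $4\delta S$. The terms $-2P(u_\delta)$, $-2P(v_\delta)$, $-2P(w_\delta)$ contribute at most
\[
2\delta(\lVert a\rVert_2+\lVert b\rVert_2+\lVert c\rVert_2)=2\delta S.
\]
Adding gives $|\Phi_P(u_\delta,v_\delta,w_\delta)|\le 6\delta S$, which is the claimed bound.

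The only point needing care is the Lipschitz constant for the scaled coordinates. This requires $\tau\ge1$, so that the scaled functionals $g_j/\tau$ and $g_k/\tau^2$ still have norm at most $1$; that holds because $\tau\in(1,2)$. Everything else is routine.
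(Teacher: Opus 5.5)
Your proposal is correct and follows the paper's proof essentially step for step: the paper likewise notes that the unperturbed value vanishes by homogeneity ($A_0=B_0=6M_m(r)$), then bounds the four positive terms by $4\delta S$ and the three negative terms by $2\delta S$. The only cosmetic difference is the Lipschitz step, where the paper uses the reverse triangle inequality $|M_m(x)-M_m(y)|\le M_m(x-y)$ together with the domination $M_m\le\lVert\cdot\rVert_2$, while you argue coordinatewise through the maximum; both rest on $\lVert g_n\rVert_2=1$ and $\tau>1$.
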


\begin{proof}
Since $M_m$ is a seminorm, we  verify that
\[
| M_m(x) - M_m(y)| \leq | M_m(x - y)| \leq \lVert x - y \rVert_2,
\]
by the observation noted in the proof roadmap of Theorem \ref{thm: l^2 equivalent}. Implicitly, for every $h \in \ell^2$, we have
\[
|M_m(x + \delta h) - M_m(x)| \leq \delta \lVert h \rVert_2.
\]
Now write $S = \lVert a \rVert_2 + \lVert b \rVert_2 + \lVert c \rVert_2$ and denote
\[
A_\delta = M_m(u_\delta + v_\delta + w_\delta) + M_m(u_\delta + v_\delta - w_\delta) + M_m(u_\delta - v_\delta + w_\delta) + M_m(-u_\delta + v_\delta + w_\delta) 
\]
and
\[
B_\delta = 2M_m(u_\delta) + 2M_m(v_\delta) + 2M_m(w_\delta),
\]
so that $\Phi_{M_m}(u_\delta, v_\delta, w_\delta) = A_\delta - B_\delta$. At $\delta = 0$, positive homogeneity of $M_m$ gives
\[
A_0 = M_m(3r) + 3M_m(r) =6M_m(r), \quad B_0 = 3 \cdot 2M_m(r) = 6M_m(r),
\]
so we have $\Phi_{M_m}(u_\delta, v_\delta, w_\delta) = (A_\delta - A_0) - (B_\delta - B_0)$.

We next estimate each term of $A_\delta - A_0$. Estimation of the term
\[
M_m(u_\delta + v_\delta + w_\delta) - M_m(u_0 + v_0 + w_0) 
\]
yields
\[
|M_m(3r + \delta(a + b + c)) - M_m(3r)| \leq \delta \lVert a + b + c \rVert_2 \leq \delta S.
\]
The same upper bound holds for each of the other three analogous terms of $A_\delta - A_0$. Hence, $|A_\delta - A_0| \leq 4\delta S$.

We now estimate $B_\delta - B_0$. We have
\begin{align*}
|M_m(r + \delta a) - M_m(r)| &\leq \delta \lVert a \rVert_2, \\
|M_m(r + \delta b) - M_m(r)| &\leq \delta \lVert b \rVert_2, \\
|M_m(r + \delta c) - M_m(r)| &\leq \delta \lVert c \rVert_2
\end{align*}
and adding these three inequalities yields 
\[
|B_\delta - B_0| \leq 2\delta \lVert a \rVert_2 +  2\delta \lVert b \rVert_2 +  2\delta \lVert c \rVert_2 = 2\delta S.
\]
We conclude that 
\[
|\Phi_{M_m}(u_\delta, v_\delta, w_\delta)| \leq |A_\delta - A_0| + |B_\delta - B_0| \leq 6\delta S.
\]
\end{proof}

We are finally ready to lay out the proof of Theorem \ref{thm: l^2 equivalent}.

\begin{proof}[Proof of Theorem \ref{thm: l^2 equivalent}]

Our aim is to prove that the norm $\lVert \cdot \rVert_X$ as described in the proof roadmap satisfies the following: an arbitrary three-dimensional subspace $E$ of $\ell^2$ admits elements $u, v, w \in E$ such that $
\Phi_{\lVert \cdot \rVert_X}(u, v, w) < 0$. Remark \ref{rem: hh seminorm} will imply that $E$ is not Hornich-Hlawka.

Let $\{g_n\}_{n=1}^\infty$ be a dense sequence in the unit sphere of $\ell^2$. Following the proof roadmap of Theorem \ref{thm: l^2 equivalent}, perform the enumeration of the $G_{ijk}$'s and the construction of the global sequence $M_1, M_2, M_3, \ldots$.

Now let $E \subseteq \ell^2$ be a fixed three-dimensional subspace. 
By Lemma \ref{lemma: restrictions dense}, the restrictions $g_n|_E$ span $E^*$, so there exists a triplet $G_{ijk} =(g_i, g_j, g_k)$ such that
\[
G_E: E \to \R^3, \quad  G_E(x) = \left(\langle x, g_i \rangle, \langle x, g_j \rangle, \langle x, g_k \rangle\right)
\]
is an isomorphism. By Proposition \ref{prop: arbitrary index}, we may choose an index $m \in \mathbb{N}$ taken arbitrarily large so that statements (1) through (3) in that lemma are satisfied for this choice of $G_E$. 

For such an $m$ and associated parameter $\tau \in (1, 2)$, we take the definitions of $M_m(x)$ and $\lVert x \rVert_X$ as in  \eqref{eq: x norm}. By Lemma \ref{lemma: tail estimate} and the property that $ M_m(x) \leq \lVert x \rVert_2$, we have
\[
\lVert x \rVert_2 \leq \lVert x \rVert_X \leq \left(1 + \sum_{m=1}^\infty \varepsilon_m\right)  \lVert x \rVert_2,
\]
where the right-hand side is a finite constant multiple of $\lVert x \rVert_2$. This proves the assertion that $\lVert \cdot \rVert_X$ is equivalent to $\lVert \cdot \rVert_2$.

Now rewrite \eqref{eq: x norm} as
\[
\lVert x \rVert_X = \lVert x \rVert_2 + \sum_{k=1}^{m-1} \varepsilon_kM_k(x) + \varepsilon_m M_m(x) + \sum_{k=m+1}^\infty \varepsilon_k M_k(x),
\]
and denote parts of  the expression on the right-hand side as the seminorms
\[
P_{<m}(x) = \lVert x \rVert_2 + \sum_{k=1}^{m-1}\varepsilon_kM_k(x), \quad P_{>m}(x) =\sum_{k=m+1}^\infty \varepsilon_k M_k(x).
\]
We first bound the tail term $P_{>m}(x)$ as follows. Let $r_\tau, u_\delta, v_\delta, w_\delta$ be defined as in the notation of Lemma \ref{lemma: max cusp}, where we take $F = F_\tau$. By linearity in the seminorm argument, we have
\[
\Phi_{P_{>m}}(u_\delta, v_\delta, w_\delta) = \sum_{k=m+1}^\infty \varepsilon_k \Phi_{M_k}(u_\delta, v_\delta, w_\delta),
\]
where the series on the right-hand side converges absolutely by Lemma \ref{lemma: lipschitz} and Proposition \ref{prop: arbitrary index}:
\[
\sum_{k=m+1}^\infty \varepsilon_k \left|\Phi_{M_k}(u_\delta, v_\delta, w_\delta)\right| \leq \sum_{k=m+1}^\infty \varepsilon_k \cdot 6 \delta \left(\lVert a_\tau \rVert_2 + \lVert b_\tau \rVert_2 + \lVert c_\tau \rVert_2 \right) \leq 6C_E \delta \sum_{k=m+1}^\infty \varepsilon_k.
\]
Since $m$ can be arbitrarily large, by Lemma \ref{lemma: tail estimate} we may take $m$ sufficiently large so that
\[
\sum_{k=m + 1}^\infty \varepsilon_k \leq \frac{1}{12C_E} \varepsilon_m.
\]
This implies
\begin{align} \label{eq: tail estimate}
\left|\Phi_{P_{>m}}(u_\delta,v_\delta,w_\delta)\right| \leq \frac12 \varepsilon_m  \delta.
\end{align}
Next, we inspect $P_{<m}(x)$. By Proposition \ref{prop: arbitrary index}, every $M_k|_E$ where $k < m$ is differentiable at $r_\tau$, and the Hilbert norm is differentiable at every nonzero point. Hence, $P_{<m}|_E$ is also differentiable at $r_\tau$, so Lemma \ref{lemma: zero first variation} gives
\[
\Phi_{P_{<m}}(u_\delta, v_\delta, w_\delta) = o(\delta).
\]
Choose $0 < \delta < 1$ sufficiently small so that
\begin{align} \label{eq: earlier estimate}
\left|\Phi_{P_{<m}}(u_\delta,v_\delta,w_\delta)\right| \leq \frac12\varepsilon_m\delta.
\end{align}
Also, by Lemma \ref{lemma: max cusp}, we have exactly
\begin{align} \label{eq: cusp estimate}
\varepsilon_m \Phi_{M_m}(u_\delta, v_\delta, w_\delta) = -2 \varepsilon_m \delta. 
\end{align}

Combining \eqref{eq: tail estimate}, \eqref{eq: earlier estimate}, and \eqref{eq: cusp estimate}, we have by linearity
\begin{align*}
\Phi_{\lVert \cdot \rVert_X}(u_\delta, v_\delta, w_\delta) &= \Phi_{P_{<m}}(u_\delta, v_\delta, w_\delta) + \varepsilon_m \Phi_{M_m}(u_\delta, v_\delta, w_\delta) + \Phi_{P_{>m}}(u_\delta, v_\delta, w_\delta) \\
&\leq \frac12 \varepsilon_m \delta - 2\varepsilon_m \delta + \frac12 \varepsilon_m \delta = -\varepsilon_m \delta < 0.
\end{align*}
We have found $u, v, w \in E$ such that $\Phi_{\lVert \cdot \rVert_X}(u, v, w) < 0$, so Remark \ref{rem: hh seminorm} lets us conclude that $E$ is not Hornich-Hlawka.
\end{proof}

\begin{corollary} If $Y \subseteq X$ is a subspace with $\dim Y \geq 3$, then $Y$ is not Hornich-Hlawka. Consequently, the construction of $X$ in Theorem \ref{thm: l^2 equivalent} is an affirmative construction for Conjecture \ref{prob: far apart}.
\end{corollary}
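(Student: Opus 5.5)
The corollary should follow directly from Theorem \ref{thm: l^2 equivalent}, using the fact that the Hornich-Hlawka property passes to subspaces.

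\textbf{Step 1: a three-dimensional subspace.} Let $Y \subseteq X$ be a subspace with $\dim Y \geq 3$. Pick three linearly independent vectors in $Y$ and let $E$ be their span. Then $E$ is a three-dimensional subspace of $X = (\ell^2, \lVert \cdot \rVert_X)$.

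\textbf{Step 2: a violating triple.} By the proof of Theorem \ref{thm: l^2 equivalent}, there are $u, v, w \in E$ with
\[
\Phi_{\lVert \cdot \rVert_X}(u, v, w) < 0.
\]
So \eqref{eq: uvw rewrite} fails for this triple. Since $E \subseteq Y$ and the norm on $Y$ is the restriction of $\lVert \cdot \rVert_X$, the same $u, v, w$ violate \eqref{eq: uvw rewrite} in $Y$. By Remark \ref{rem: hh seminorm} (equivalently, the proposition equating \eqref{eq: hh inequality} and \eqref{eq: uvw rewrite}), $Y$ is not Hornich-Hlawka.

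\textbf{Step 3: the consequence for Conjecture \ref{prob: far apart}.} Apply Step 2 to every finite-dimensional $Y$ with $\dim Y \geq 3$. This shows that $X$ is a Banach space with no such Hornich-Hlawka subspace. Completeness of $X$ holds because $\lVert \cdot \rVert_X$ is equivalent to $\lVert \cdot \rVert_2$ and $\ell^2$ is complete. Moreover, $X$ is infinite-dimensional, so it settles the nontrivial variant of the conjecture.

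There is no real obstacle here. The only point to state explicitly is that the Hornich-Hlawka property is hereditary to subspaces, and contrapositively that a violating triple in $E$ is also one in any $Y \supseteq E$.
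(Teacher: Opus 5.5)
Your proposal is correct and matches the paper's argument: the paper likewise observes that a Hornich-Hlawka $Y$ with $\dim Y \geq 3$ would contain a three-dimensional Hornich-Hlawka subspace, contradicting Theorem \ref{thm: l^2 equivalent}, which is your heredity argument in contrapositive form. Your extra remarks on completeness (via equivalence with $\lVert \cdot \rVert_2$) and on infinite-dimensionality are correct and make explicit what the paper leaves implicit.
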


\begin{proof}
Suppose to the contrary that $\dim Y \geq 3$ and $Y$ is Hornich-Hlawka. Then $Y$ contains a three-dimensional subspace that is Hornich-Hlawka, contradicting Theorem \ref{thm: l^2 equivalent}. 
\end{proof}

\begin{remark}
We note that the renorming in Theorem \ref{thm: l^2 equivalent} and its proof can be made arbitrarily close to the Hilbert norm. For $c > 0$, define
\[
N_c(x) = \lVert x \rVert_2 + c\sum_{m=1}^{\infty}\varepsilon_mM_m(x).
\]
Then
\[
\lVert x \rVert_2 \leq N_c(x) \leq \left(1 + c\sum_{m=1}^\infty \varepsilon_m\right)\lVert x \rVert_2.
\]
By the same arguments as in the proof of Theorem \ref{thm: l^2 equivalent}, one can show that the norm $N_c$ fails Hornich-Hlawka for every three-dimensional subspace.
\end{remark}

As an immediate consequence to Theorem \ref{thm: l^2 equivalent}, we obtain the following.

\begin{corollary} \label{cor: l^2 finite}
Let $\lVert \cdot \rVert_X$ be the norm function on $\ell^2$ as given as in Theorem \ref{thm: l^2 equivalent} and $N \geq 3$. Consider the $N$-dimensional restricted space $\ell^2(N)$ given by the first $N$ coordinate directions of $\ell^2$. No three-dimensional subspace of $(\ell^2(N), \lVert \cdot \rVert_X)$ is Hornich-Hlawka.
\end{corollary}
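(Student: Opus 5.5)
This follows almost directly from Theorem \ref{thm: l^2 equivalent}. The plan is to regard $\ell^2(N)$ as a subspace of $\ell^2$, namely the span of the first $N$ standard basis vectors $e_1, \ldots, e_N$. In other words, we identify each $x \in \ell^2(N)$ with the sequence $(x_1, \ldots, x_N, 0, 0, \ldots) \in \ell^2$. On this subspace we use the restriction of $\lVert \cdot \rVert_X$, so that $(\ell^2(N), \lVert \cdot \rVert_X)$ is, isometrically, a linear subspace of $X = (\ell^2, \lVert \cdot \rVert_X)$. This identification is linear and injective, and by construction it preserves the $X$-norm exactly.

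Next, let $E$ be any three-dimensional subspace of $(\ell^2(N), \lVert \cdot \rVert_X)$; such subspaces exist because $N \geq 3$. Under the identification above, $E$ is a three-dimensional linear subspace of $\ell^2$, and it carries the same norm $\lVert \cdot \rVert_X$. The Hornich-Hlawka property of a normed space depends only on the values of the norm on vectors of that space. Theorem \ref{thm: l^2 equivalent} says that no three-dimensional subspace of $X$ is Hornich-Hlawka, so $E$ is not Hornich-Hlawka. Concretely, the proof of that theorem produces $u, v, w \in E$ with $\Phi_{\lVert \cdot \rVert_X}(u, v, w) < 0$, and Remark \ref{rem: hh seminorm} turns this into a violation of \eqref{eq: uvw rewrite} inside $E$.

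There is no real obstacle. The only point that needs checking is that zero-padding is a linear isometric embedding for $\lVert \cdot \rVert_X$, as opposed to $\lVert \cdot \rVert_2$. This holds automatically: we do not transport a different norm, we simply restrict $\lVert \cdot \rVert_X$ to the coordinate subspace. Since $E$ was arbitrary, the conclusion holds for every three-dimensional subspace, and for every $N \geq 3$.
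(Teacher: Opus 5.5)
Your proposal is correct and matches the paper, which treats this corollary as an immediate consequence of Theorem \ref{thm: l^2 equivalent} and gives no separate proof. As you say, a three-dimensional subspace of $(\ell^2(N), \lVert \cdot \rVert_X)$ is a three-dimensional subspace of $X$ carrying the restricted norm, so the theorem applies to it directly.
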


Hence, the answer to the finite-dimensional variant of Problem \ref{prob: far apart} is also true, and there is a constructive example for every finite dimension $N \geq 3$.

\section{\texorpdfstring{Problems 3 and 4: Generalizations of $L^p$ spaces to Nakano and Sobolev spaces}{Problems 3 and 4: Generalization of Lp spaces to Nakano and Sobolev spaces}} Problems 3 and 4 relate to the Hornich-Hlawka property of Nakano and Sobolev spaces that generalize $L^p$ spaces. We restate the problems here for ease of reference.

\begin{problem} \label{prob: nakano} \cite[Problem~3]{marinescu2025survey} Is every Nakano space $L^{p(\cdot)}(0, 1)$ with a variable exponent $p(\cdot):(0, 1) \to [1, 2]$ a Hornich-Hlawka space?
\end{problem}

\begin{problem} \label{prob: sobolev} \cite[Problem~4]{marinescu2025survey} Which Sobolev spaces $W^{k, \, p(\cdot)}(0, 1)$ with $k \in \N \cup \{0\}$ and a variable exponent $p(\cdot): (0, 1) \to [1, 2]$ verify the Hornich-Hlawka inequality?
\end{problem}

\subsection{Nakano spaces}

Theorem \ref{thm: nakano counterexample} answers Problem \ref{prob: nakano} in the negative. The proof, which is primarily computational, is given below.

\begin{proof}[Proof of Theorem \ref{thm: nakano counterexample}]
Let 
\[
A_1 = \left(0, \frac15\right), \quad A_2 = \left(\frac15, \frac25\right), \quad A_3 = \left(\frac45, 1\right)
\]
be disjoint subsets of $(0, 1)$. Define the functions $f_1, f_2, f_3 \in L^{p(\cdot)}(0, 1)$ by
\[
f_1 = 5 \cdot \mathds{1}_{A_1}, \quad f_2 = 5 \cdot \mathds{1}_{A_2}, \quad f_3 = \sqrt{5} \cdot \mathds{1}_{A_3}.
\]
We first derive a formula for the Nakano norm of elements in $L^{p(\cdot)}(0, 1)$ that are linear combinations of $f_1, f_2, f_3$. Let $\alpha, \beta, \gamma$ be scalars not all zero. If $f = \alpha f_1 + \beta f_2 + \gamma f_3$, then for $\lambda > 0$ we have
\begin{align*}
\int_0^1 \left|\frac{f(t)}{\lambda}\right|^{p(t)}\,dt &= \int_0^{1/5} \left|\frac{5\alpha}{\lambda}\right|\,dt + \int_{1/5}^{2/5} \left|\frac{5\beta}{\lambda}\right|\,dt + \int_{4/5}^1 \left|\frac{\sqrt5\gamma}{\lambda}\right|^2\,dt \\
&= \frac15\left|\frac{5\alpha}{\lambda}\right| + \frac15 \left|\frac{5\beta}{\lambda}\right| + \frac15\left|\frac{\sqrt5\gamma}{\lambda}\right|^2 = \frac{|\alpha| + |\beta|}{\lambda} + \frac{|\gamma|^2}{\lambda^2}.
\end{align*}
Thus, $\lVert f \rVert_{p(\cdot)}$ is the unique positive $\lambda$ satisfying
\begin{align*}
&\frac{|\alpha| + |\beta|}{\lambda} + \frac{|\gamma|^2}{\lambda^2} = 1 \\
&\Longleftrightarrow \lambda^2  - (|\alpha| + |\beta|)\lambda - |\gamma|^2 = 0 \\
&\Longleftrightarrow \lambda = \frac{|\alpha| + |\beta| + \sqrt{(|\alpha| + |\beta|)^2 + 4|\gamma|^2}}{2}.
\end{align*}
Now we choose the three elements
\[
x = -2f_1 - f_3, \quad y = -2f_2, \quad z = 2f_2 + f_3.
\]
We compute
\[
x + y = -2f_1 - 2f_2 - f_3, \quad y + z = f_3, \quad z + x = -2f_1 + 2f_2, \quad x + y + z = -2f_1
\]
so
\begin{align*}
&\lVert x \rVert_{p(\cdot)} = \lVert z \rVert_{p(\cdot)} = 1 + \sqrt2, \quad \lVert y \rVert_{p(\cdot)} =\lVert x + y + z \rVert_{p(\cdot)} = 2, \\
&\lVert x + y \rVert_{p(\cdot)} = 2 + \sqrt{5}, \quad \lVert y + z \rVert_{p(\cdot)} = 1, \quad \lVert z + x \rVert_{p(\cdot)} = 4.
\end{align*}
Thus, the left-hand side of the Hornich-Hlawka inequality \eqref{eq: hh inequality} is
\[
(1 + \sqrt2) + 2 +(1 + \sqrt2) + 2 = 6 + 2\sqrt2 < 9
\]
and the right-hand side is
\[
(2 + \sqrt5) + 1 + 4 = 7 + \sqrt5 > 9.
\]
Since the left-hand side is less than the right-hand side, \eqref{eq: hh inequality} is violated and we conclude that $L^{p(\cdot)}(0, 1)$ is not Hornich-Hlawka.
\end{proof}

\begin{remark} \label{rem: different p()}
Although we defined $p(\cdot)$ in Theorem \ref{thm: nakano counterexample} to be continuous on $(0, 1)$, we only used the hypothesis that $p(\cdot)$ has constant value $1$ on $(0, 2/5)$ and constant value $2$ on $(4/5, 1)$; the intermediate domain $[2/5, 4/5]$ has no bearing on the verification of this construction. By modifying the behavior of $p(\cdot)$ on this intermediate domain, we may produce alternative constructions of $p(\cdot)$, as illustrated in the following two examples. This lets us rule out many more classes of possibilities for Nakano spaces of the form $L^{p(\cdot)}(0, 1)$ being Hornich-Hlawka.
\end{remark}

\begin{example} 
Define 
\[
\rho(s) = \begin{cases}
e^{-1/s},  &s > 0 \\
0, &s \leq 0
\end{cases}, \quad  \eta(s) = \frac{\rho(s)}{\rho(s) + \rho(1 - s)},
\]
and $p(t) = 1 + \eta(5t - 3)$. Then the space $L^{p(\cdot)}(0, 1)$ also fails  to be Hornich-Hlawka by exactly the same proof of Theorem \ref{thm: nakano counterexample}. Consequently, there is a $C^\infty(0, 1)$ function $p(\cdot)$ that fails to permit $L^{p(\cdot)}(0, 1)$ to be Hornich-Hlawka.
\end{example}

\begin{example}
Let $p(\cdot)$ be piecewise constant, attaining values $1$ and $2$ on the intervals $(0, \frac{7}{10})$ and $[\frac{7}{10}, 1)$ respectively. The proof of Theorem \ref{thm: nakano counterexample} again implies $L^{p(\cdot)}(0, 1)$ is not Hornich-Hlawka. Thus, even a piecewise constant function $p(\cdot):(0, 1) \to [1, 2]$ may fail to permit $L^{p(\cdot)}(0, 1)$ to be Hornich-Hlawka.
\end{example}

The above examples serve as motivation for the following conjecture on possible Nakano spaces of the form $L^{p(\cdot)}(0, 1)$ that are Hornich-Hlawka.

\begin{conjecture} \label{conj: nakano}
If the Nakano space $L^{p(\cdot)}(0, 1)$ with a variable exponent $p(\cdot): (0, 1) \to [1, 2]$ is Hornich-Hlawka, then $p(\cdot)$ is constant almost everywhere.
\end{conjecture}

Combined with Corollary \ref{cor: p <= 2}, Conjecture \ref{conj: nakano} would imply a full classification of Nakano spaces $L^{p(\cdot)}(0, 1)$ that are Hornich-Hlawka: explicitly, for measurable $p(\cdot):(0, 1) \to [1, 2]$, the space $L^{p(\cdot)}(0, 1)$ would be Hornich-Hlawka if and only if $p(\cdot)$ is essentially constant.

\subsection{Sobolev spaces}

Theorem \ref{thm: sobolev nakano} reduces Problem \ref{prob: sobolev} to a problem purely in Nakano spaces. This shows that the choice of $k$ has no effect on the Hornich-Hlawka property of $W^{k, \, p(\cdot)}(0, 1)$  provided that the norm is chosen as in \eqref{eq: sobolev norm}.

We will need an auxiliary result, whose notation we will reuse in the proof of Theorem \ref{thm: sobolev nakano}.

\begin{lemma} \label{lemma: integration estimate}
For $n \in \N$ and $t \in (0, 1)$, define the sign function
\[
\varepsilon_n(t) = (-1)^j, \quad \frac{j}{2n} \leq t < \frac{j + 1}{2n}, \quad j \in \{0, 1, \ldots, 2n - 1\}.
\]
For $h \in L^{p(\cdot)}(0, 1)$, define $I$ by 
\[
(Ih)(x) = \int_0^x h(t)dt
\]
and let $I^m$ denote its $m$-fold iteration. Then  for every integer $m \geq 1$,
\[
\lim_{n \to \infty} \lVert I^m(\varepsilon_n h) \rVert_{p(\cdot)} = 0.
\]
\end{lemma}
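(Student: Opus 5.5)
Since $p(\cdot) \le 2$ and $(0,1)$ has finite measure, we have the continuous embeddings $L^\infty(0,1) \hookrightarrow L^{p(\cdot)}(0,1) \hookrightarrow L^1(0,1)$, with $\lVert g \rVert_{p(\cdot)} \le C\lVert g \rVert_\infty$. Indeed, if $\lVert g\rVert_\infty \le \lambda$ then $\int_0^1 |g/\lambda|^{p(t)}\,dt \le 1$, so $\lVert g \rVert_{p(\cdot)} \le \lVert g\rVert_\infty$. The plan is therefore to show that $I(\varepsilon_n h) \to 0$ uniformly on $(0,1)$. Once this is known, the case $m \ge 2$ follows because $I$ is a contraction in sup norm on $(0,1)$: $\lVert I g\rVert_\infty \le \lVert g \rVert_1 \le \lVert g\rVert_\infty$. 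Hence $\lVert I^m(\varepsilon_n h)\rVert_{p(\cdot)} \le \lVert I(\varepsilon_n h)\rVert_\infty$ for every $m\ge1$, and it suffices to treat $m=1$ in the sup norm.

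For $m=1$, first note $h \in L^1(0,1)$: taking $\lambda = \lVert h\rVert_{p(\cdot)}$ (if nonzero), the modular gives $\int |h/\lambda|^{p(t)} \le 1$, and $|s| \le 1 + |s|^{p(t)}$ yields $\lVert h\rVert_1 \le 2\lVert h\rVert_{p(\cdot)}$. Then argue by density. Given $\eta>0$, pick a step function $s$ (finitely many intervals, or even a continuous function) with $\lVert h - s\rVert_1 < \eta$. For all $x$,
\[
|I(\varepsilon_n h)(x)| \le \lVert h - s\rVert_1 + |I(\varepsilon_n s)(x)|,
\]
since $|\varepsilon_n|=1$. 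For $s$ continuous on $[0,1]$ (density of $C[0,1]$ in $L^1$), split $[0,x]$ into the complete pairs of consecutive dyadic-type cells $[\frac{2i}{2n},\frac{2i+2}{2n})$ plus one leftover partial block of length at most $1/n$. On each complete pair, $\varepsilon_n$ equals $+1$ then $-1$ on halves of length $\frac1{2n}$, so the integral equals $\int_0^{1/(2n)}\bigl(s(a+t)-s(a+t+\tfrac1{2n})\bigr)\,dt$, which is bounded by $\frac1{2n}\,\omega_s(\frac1{2n})$, where $\omega_s$ is the modulus of continuity. Summing over at most $n$ pairs gives at most $\frac12\omega_s(\frac1{2n})$. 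The leftover piece contributes at most $\lVert s\rVert_\infty/n$. Thus $\sup_x |I(\varepsilon_n s)(x)| \to 0$ as $n\to\infty$, so $\limsup_n \lVert I(\varepsilon_n h)\rVert_\infty \le \eta$ for every $\eta > 0$, which is the claim. This is essentially a uniform Riemann–Lebesgue lemma for Rademacher-type oscillation.

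The main obstacle is only bookkeeping. First, we must confirm that the Luxemburg-norm comparisons $\lVert\cdot\rVert_{p(\cdot)} \le \lVert\cdot\rVert_\infty$ and $\lVert\cdot\rVert_1 \le 2\lVert\cdot\rVert_{p(\cdot)}$ hold for variable exponents in $[1,2]$; both follow directly from the modular as indicated. Second, we must check that uniformity in $x$ survives the partial last block, which the bound $\lVert s\rVert_\infty/n$ handles. We also need measurability and integrability of $\varepsilon_n h$ for $Ih$ to be defined, which is immediate from $h\in L^1$.
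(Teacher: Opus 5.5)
Your proposal is correct and follows essentially the same route as the paper: the inclusion $L^{p(\cdot)}(0,1)\subseteq L^1(0,1)$ via the modular, the modulus-of-continuity estimate on paired cells with an $\lVert s\rVert_\infty/n$ bound for the leftover block, $L^1$-density of continuous functions, and the comparison $\lVert g\rVert_{p(\cdot)}\le\lVert g\rVert_\infty$. The only cosmetic difference is that you reduce $m\ge 2$ to $m=1$ using the sup-norm contraction $\lVert Ig\rVert_\infty\le\lVert g\rVert_\infty$, whereas the paper uses the bound $\lVert F_n\rVert_\infty x^{m-1}/(m-1)!$; both give the same conclusion.
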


\begin{proof}
We first establish that $L^{p(\cdot)}(0, 1) \subseteq L^1(0, 1)$. Let $h \in L^{p(\cdot)}(0, 1)$ and choose $\lambda > \lVert h \rVert_{p(\cdot)}$. Then
\[
\int_0^1 \left|\frac{h(t)}{\lambda}\right|^{p(t)} \, dt \leq 1.
\]
Write $g = |h|/\lambda$. On the set $\{g > 1\}$, we have $g \leq g^{p(t)} $ because $p(t) \geq 1$. Hence,
\[
\int_0^1 g = \int_{\{g \leq 1\}} g +  \int_{\{g > 1\}} g \leq 1 +  \int_{\{g > 1\}} g(t)^{p(t)}\,dt \leq 2,
\]
so $\lVert h \rVert_1 \leq 2\lambda$. Taking $\lambda \searrow \lVert h \rVert_{p(\cdot)} $ proves the desired inclusion.

Next, we prove the claim for the special case $m = 1$. Our aim is to prove that
\[
\lim_{n \to \infty} \sup_{0 \leq x \leq 1} \left|\int_0^x \varepsilon_n(t)h(t)\,dt\right| = 0.
\]
Assume for the moment that $h$ is continuous on $[0, 1]$. Since $\varepsilon_n$ alternates signs on intervals of length $1/(2n)$, we have
\[
\int_{j/n}^{(j + 1)/n} \varepsilon_n(t)h(t)\,dt = \int_{j/n}^{j/n + 1/(2n)} \left[h(t) - h\left(t + \frac{1}{2n}\right)\right]\,dt.
\]
Let $\omega_h$ be the modulus of continuity of $h$. The absolute value of the above expression is bounded above by
\[
\frac1{2n}\omega_h\left(\frac1{2n}\right).
\]
Summing this over all intervals $[j/n, (j + 1)/n]$ fully contained in $[0, x]$ and adding the possible incomplete final interval, we get
\[
\sup_{0 \leq x \leq 1} \left|\int_0^x \varepsilon_n(t)h(t)\,dt\right| \leq \frac12 \omega_h\left(\frac1{2n}\right) + \frac{\lVert h \rVert_\infty}{n} \xrightarrow{n\to\infty}0.
\]
Now we drop the continuity assumption on $h \in L^1$. Choose $g$ to be continuous with $\lVert h - g \rVert_1 < \eta$. We have
\[
\sup_{0 \leq x \leq 1}  \left|\int_0^x \varepsilon_n(t)[h(t) - g(t)]\,dt\right| \leq \lVert h - g \rVert_1 < \eta.
\]
Since the primitive of $\varepsilon_n g$ tends uniformly to zero as $n \to \infty$, so does that of  $\varepsilon_n h$ by taking $\eta \searrow 0$.

Finally, we prove the claim for all $m \geq 1$. If $F_n = I(\varepsilon_n h)$, then $I^m(\varepsilon_ n h) = I^{m-1}F_n$. By the bound $| F_n(x) | \leq \lVert F_n \rVert_\infty$ and repeated integration, we get
\[
|I^{m-1}F_n(x) | \leq \frac{\lVert F_n \rVert_\infty}{(m - 1)!}x^{m-1},
\]
so
\[
\lVert I^{m-1}F_n \rVert_\infty \leq \frac{\lVert F_n \rVert_\infty}{(m - 1)!}.
\]
By our proof for $m = 1$, we see that the right-hand side tends to zero. Hence, $\lVert I^m(\varepsilon_nh) \rVert_\infty \to 0$.

We observe that for every nonzero bounded $g$, we have
\[
\lVert g \rVert_{p(\cdot)} \leq \lVert g \rVert_\infty,
\]
which can be seen by taking $\lambda  = \lVert g \rVert_\infty$ in the definition of the Luxemburg norm. This inequality also holds for $g$ zero. Now taking $g = I^m (\varepsilon_n  h)$ completes the proof.
\end{proof}

\begin{proof}[Proof of Theorem \ref{thm: sobolev nakano}]

The case $k = 0$ is immediate, so we assume $k \geq 1$.

First assume $X = L^{p(\cdot)}(0, 1)$ is Hornich-Hlawka. We observe that the finite $\ell^1$-sum $\ell^1_{k + 1}(X)$ is Hornich-Hlawka with norm
\[
\lVert (f_1, \ldots, f_{k + 1}) \rVert_{\ell^1_{k + 1}(X)} = \sum_{j=1}^{k + 1} \lVert f_j \rVert_{p(\cdot)}.
\]
Indeed, each of the seven terms in the Hornich-Hlawka inequality for $\ell^1_{k + 1}(X)$ can be broken up as a sum in terms of each coordinate. Each coordinate-wise Hornich-Hlawka inequality holds for $X$, so summing these inequalities gives exactly the Hornich-Hlawka inequality on $\ell^1_{k + 1}(X)$.

Now define the linear operator $J: W^{k, \, p(\cdot)}(0, 1) \to \ell_{k+1}^1(X)$ by
\[
Jf = \left(f, f', \ldots, f^{(k)}\right).
\]
We have
\[
\lVert Jf \rVert_{\ell^1_{k + 1}(X)} = \sum_{j=0}^k \lVert f^{(j)} \rVert_{p(\cdot)} = \lVert f \rVert_{k, \, p(\cdot)},
\]
so $J$ is an isometric embedding of $W^{k, p(\cdot)}(0, 1)$ into the Hornich-Hlawka space $\ell^1_{k + 1}(X)$. This shows that $W^{k, p(\cdot)}(0, 1)$ is also Hornich-Hlawka.

Now assume that there is a counterexample to the Hornich-Hlawka inequality on $L^{p(\cdot)}(0, 1)$; we will lift this to a counterexample on $ W^{k, \, p(\cdot)}(0, 1)$. We borrow the notation from Lemma \ref{lemma: integration estimate}. Let $f, g, h \in L^{p(\cdot)}(0, 1)$ be such a counterexample. For $n \in \N$ and $k \geq 1$, define
\[
F_n = I^k(\varepsilon_n f), \quad G_n = I^k(\varepsilon_n g), \quad H_n = I^k(\varepsilon_n h),
\]
where we note that $F_n, G_n, H_n$ all belong to $W^{k, p(\cdot)}(0, 1)$ because their highest-order derivatives are in $L^{p(\cdot)}(0, 1)$ and their lower-order derivatives are bounded primitives. We see that
\[
F_n^{(j)} = I^{k-j}(\varepsilon_n f), \quad j \in \{0, \ldots, k - 1\}
\]
and $F_n^{(k)} = \varepsilon_n f$, with analogous formulas for $G_n$ and $H_n$. Since $|\varepsilon_n| = 1$, we have
\[
\lVert F_n \rVert_{k, p(\cdot)} = \sum_{j=0}^{k - 1} \lVert I^{k-j}(\varepsilon_n f) \rVert_{p(\cdot)} + \lVert \varepsilon_n f \rVert_{p(\cdot)} \xrightarrow{n \to \infty} \lVert f \rVert_{p(\cdot)} 
\]
by Lemma \ref{lemma: integration estimate}. By the same argument, we have more generally that for any constants $\alpha, \beta, \gamma$,
\[
\lVert \alpha F_n + \beta G_n + \gamma H_n \rVert_{k, p(\cdot)}  \xrightarrow{n \to \infty} \lVert \alpha f + \beta g + \gamma h  \rVert_{p(\cdot)}.
\]
Applying this to each of the seven combinations occurring in the Hornich-Hlawka inequality \eqref{eq: hh inequality}, we arrive at a violation of the Hornich-Hlawka inequality with $F_n, G_n, H_n \in W^{k, p(\cdot)}(0, 1)$ for $n$ sufficiently large. 
\end{proof}

Theorem \ref{thm: sobolev nakano} implies that the answer to Problem \ref{prob: sobolev} is essentially the same as the classification problem of Nakano spaces, which Conjecture \ref{conj: nakano} would resolve if settled in the affirmative.

\section{\texorpdfstring{Problem 5: An $\alpha$-power generalization of the Hornich-Hlawka inequality}{Problem 5: An alpha-power generalization of the Hornich-Hlawka inequality}} The Hornich-Hlawka inequality \eqref{eq: hh inequality} for inner product spaces may also be extended in the following sense. \label{sec: problem 5}

\begin{problem}\cite[Problem~5]{marinescu2025survey} 
Let $(E, \langle \cdot, \cdot \rangle)$ be a nonzero inner product space with induced norm $\lVert x \rVert = \sqrt{\langle x, x \rangle}$. For which values of $\alpha \in (1, 2)$ does
\begin{align}
\lVert x \rVert^\alpha + \lVert y \rVert^\alpha + \lVert z \rVert^\alpha + \lVert x + y + z \rVert^\alpha \geq \lVert x + y \rVert^\alpha + \lVert  y + z \rVert^\alpha + \lVert z + x \rVert^\alpha \label{eq: alpha power}
\end{align}
hold for all $x, y, z \in E$? 
\end{problem}

Taking $\alpha = 1$ in \eqref{eq: alpha power} yields the usual Hornich-Hlawka inequality \eqref{eq: hh inequality} and taking $\alpha = 2$ yields the Fr\'echet equality \eqref{eq: frechet}, both of which hold for all $x, y, z \in E$. Indeed, the $\alpha = 2$ case and \cite[Theorem~9]{marinescu2025survey} establish that,  when $\alpha \in (0, 1] \cup \{2\}$, 
\eqref{eq: alpha power} holds for all $x, y, z \in E$.

The following two theorems show that in fact, no other positive value of $\alpha$ permits the statement to hold.

\begin{theorem} \label{thm: x=y=z} Let $X$ be a nonzero normed space. For all $\alpha \in (1, 2)$, \eqref{eq: alpha power} fails on $X$ by taking
$x = y = z \neq 0$.
\end{theorem}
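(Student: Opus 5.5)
Substitute $x = y = z = v$ with $v \neq 0$ into \eqref{eq: alpha power} and use only positive homogeneity of the norm; nothing else about $X$ enters. Write $t = \lVert v \rVert > 0$.

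The left-hand side becomes
\[
3t^\alpha + \lVert 3v \rVert^\alpha = (3 + 3^\alpha)t^\alpha .
\]
The right-hand side becomes
\[
3\lVert 2v \rVert^\alpha = 3 \cdot 2^\alpha t^\alpha .
\]
Dividing by $t^\alpha > 0$, the inequality \eqref{eq: alpha power} at this triple is equivalent to
\[
f(\alpha) := 3 + 3^\alpha - 3\cdot 2^\alpha \geq 0 .
\]
So it suffices to show $f(\alpha) < 0$ for every $\alpha \in (1,2)$.

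We have $f(1) = 3 + 3 - 6 = 0$ and $f(2) = 3 + 9 - 12 = 0$. Next, $f''(\alpha) = (\log 3)^2 3^\alpha - 3(\log 2)^2 2^\alpha$, and $f''(\alpha) = 0$ exactly when $(3/2)^\alpha = 3(\log 2)^2/(\log 3)^2$. The left side is strictly increasing in $\alpha$, so $f''$ has at most one zero and changes sign from negative to positive. Hence $f$ is concave and then convex, and by Rolle's theorem $f$ has at most three real zeros.

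These zeros are $0$, $1$, and $2$, since $f(0) = 3 + 1 - 3 = 1$... that is wrong, so instead take the cleaner route. Suppose $f(\alpha_0) \geq 0$ for some $\alpha_0 \in (1,2)$.
\begin{itemize}
\item If $f(\alpha_0) > 0$, then $f$ is positive somewhere on $(1,2)$ while $f(1) = f(2) = 0$.
\item If $f(\alpha_0) = 0$, then $f$ has three zeros $1 < \alpha_0 < 2$.
\end{itemize}
In either case the shape argument below must rule it out.

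To decide the sign on $(1,2)$, compute $f'(1) = 3\log 3 - 6\log 2 = 3\log(3/4) < 0$. So $f$ dips below $0$ just to the right of $1$. Since $f$ vanishes at $1$ and $2$, is negative just after $1$, and by Rolle's theorem has at most two zeros counted with multiplicity (its second derivative vanishes at most once, so $f'$ vanishes at most twice and $f$ at most three times), a further sign change inside $(1,2)$ would force too many zeros. The cleanest way to make this precise is the following.
\begin{itemize}
\item Any extra zero in $(1,2)$, together with the negative dip after $1$, forces at least two additional critical points of $f$.
\item Together with the critical point needed between the dip and the zero at $2$, this gives $f'$ at least three zeros.
\item That contradicts $f'$ having at most two zeros, so $f < 0$ on $(1,2)$.
\end{itemize}

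The main obstacle is this zero-counting bookkeeping. An alternative that avoids it is to note that $g(\alpha) = f(\alpha)/2^\alpha = 3\cdot 2^{-\alpha} + (3/2)^\alpha - 3$ is a sum of two strictly convex exponentials. Hence $g$ is strictly convex with $g(1) = g(2) = 0$, so $g < 0$ on $(1,2)$. I would use this convexity argument as the primary route.
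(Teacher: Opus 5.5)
Your primary argument, the convexity of $g$ in $\alpha$, is correct, and it takes a different route from the paper's. Both start from the same reduction to $3+3^\alpha\geq 3\cdot 2^\alpha$.

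The paper then fixes $\alpha$ and applies Jensen's inequality to the strictly concave function $t\mapsto t^{\alpha-1}$ at the points $t=1$ and $t=3$, which gives $1+3^{\alpha-1}<2\cdot 2^{\alpha-1}=2^\alpha$. You instead fix the bases and vary the exponent: $g(\alpha)=3\cdot 2^{-\alpha}+(3/2)^\alpha-3$ is strictly convex in $\alpha$ and vanishes at $\alpha=1$ and $\alpha=2$, so it is negative in between. The paper's convexity is in the base variable and is pointwise in $\alpha$. Yours is in the exponent and makes transparent that the failure is an interpolation between the two endpoint equality cases. At $\alpha=1$ the triple $x=y=z$ gives equality in the Hornich-Hlawka inequality, and at $\alpha=2$ it gives equality, as in the Fr\'echet identity. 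Strict convexity of $g$ also gives $g>0$ off $[1,2]$, so this triple is a counterexample exactly for $\alpha\in(1,2)$. That explains why the paper needs the different triple $x=y=-z$ for $\alpha>2$. The paper's argument yields the same dichotomy, since $t\mapsto t^{\alpha-1}$ is strictly convex for $0<\alpha<1$ and for $\alpha>2$. Both proofs are one line.

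Delete the zero-counting detour from any write-up. Besides the aborted claim about a zero at $0$ and the inconsistent ``at most two zeros counted with multiplicity,'' your bullet bookkeeping double-counts. Zeros of $f$ at $1<\beta<2$ give only two critical points by Rolle's theorem. That is compatible with $f'$ having at most two zeros, so a third critical point would have to come from the behaviour at or beyond $\alpha=2$ (e.g.\ $f\to+\infty$), which you never use.

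If you want to work with $f$ directly, there is a simpler fix. Note that $f''(1)=3(\log 3)^2-6(\log 2)^2>0$, which is equivalent to $3>2^{\sqrt 2}$. By your own observation that $f''$ changes sign only once, from negative to positive, $f''>0$ on $[1,\infty)$. Hence $f$ is itself strictly convex on $[1,2]$ with $f(1)=f(2)=0$, and so $f<0$ on $(1,2)$.
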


\begin{proof}
Indeed, take $x = y = z$ to be a nonzero element of $X$. Substituting in \eqref{eq: alpha power}, we get
\begin{align*}
3\lVert x \rVert^\alpha + \lVert 3x \rVert^\alpha \geq 3\lVert 2x\rVert^\alpha \quad \Longleftrightarrow \quad (3 + 3^\alpha)\lVert x \rVert^\alpha \geq (3 \cdot 2^\alpha)\lVert x \rVert^\alpha \quad \Longleftrightarrow \quad 1 + 3^{\alpha-1} \geq 2^\alpha.
\end{align*}
Since $1 < \alpha < 2$, the map $t \mapsto t^{\alpha - 1}$ is strictly concave on $(0, \infty)$, so Jensen's Inequality gives
\[
1^{\alpha-1} + 3^{\alpha - 1} < 2 \cdot \left(\frac{1 + 3}2\right)^{\alpha - 1} = 2^\alpha.
\]
 Therefore, \eqref{eq: alpha power} fails.
\end{proof}

\begin{theorem} \label{thm: x=y=-z} Let $X$ be a nonzero normed space. For all $\alpha \in (2, \infty)$, \eqref{eq: alpha power} fails on $X$ by taking
$x = y = -z \neq 0$.
\end{theorem}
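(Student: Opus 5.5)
The plan mirrors the proof of Theorem \ref{thm: x=y=z}: substitute the specified triple and reduce \eqref{eq: alpha power} to a scalar inequality in $\alpha$. Fix a nonzero $x \in X$ and set $y = x$, $z = -x$. The seven vectors are then
\[
x = y = x, \quad z = -x, \quad x + y + z = x, \quad x + y = 2x, \quad y + z = 0, \quad z + x = 0.
\]
By homogeneity of the norm, the left-hand side of \eqref{eq: alpha power} equals $4\lVert x \rVert^\alpha$. The right-hand side equals $\lVert 2x \rVert^\alpha + 0 + 0 = 2^\alpha \lVert x \rVert^\alpha$. Here we use $\lVert 0 \rVert^\alpha = 0$, which holds since $\alpha > 0$.

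Since $\lVert x \rVert^\alpha > 0$, dividing through shows that \eqref{eq: alpha power} holds for this triple if and only if $4 \geq 2^\alpha$, that is, $\alpha \leq 2$. For $\alpha > 2$ the map $t \mapsto 2^t$ is strictly increasing, so $2^\alpha > 4$, and the inequality fails. This is consistent with the known equality case $\alpha = 2$ of the Fr\'echet identity \eqref{eq: frechet}.

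There is no real obstacle beyond checking the sign bookkeeping in the seven combinations. The one point worth stating explicitly is that the counterexample uses only homogeneity of the norm, so the argument applies to every nonzero normed space and not just to inner product spaces.
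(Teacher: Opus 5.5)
Your proposal is correct and follows essentially the same route as the paper: substitute $y = x$, $z = -x$, use homogeneity to reduce \eqref{eq: alpha power} to $4\lVert x \rVert^\alpha \geq 2^\alpha \lVert x \rVert^\alpha$, and observe that this forces $\alpha \leq 2$. Your explicit note that $\lVert 0 \rVert^\alpha = 0$ uses $\alpha > 0$ is a small but welcome addition.
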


\begin{proof} Indeed, take $x = y\neq 0$ and $z = -x$ in \eqref{eq: alpha power} to get
\begin{align*}
3\lVert x \rVert^\alpha + \lVert -x \rVert^\alpha &\geq \lVert 2x \rVert^\alpha + 2\lVert 0 \rVert^\alpha \quad \Longleftrightarrow \quad 4\lVert x \rVert^\alpha \geq 2^\alpha \lVert x \rVert^\alpha \quad 
\Longleftrightarrow \quad 4 \geq 2^\alpha \quad \Longleftrightarrow \quad \alpha \leq 2.
\end{align*}
So if $\alpha > 2$, then \eqref{eq: alpha power} fails.
\end{proof}

We conclude that for $E$ a nonzero inner product space, the only positive values of $\alpha$ that obey \eqref{eq: alpha power} for every $x, y, z \in E$ are precisely $\alpha \in (0, 1] \cup \{2\}$.

\section*{Acknowledgements}
The author thanks Grigoriy Blekherman for fruitful discussions and constructive feedback. The author credits Jose Acevedo, Sebastian Debus, and Cordian Riener for introducing the author to the Hornich-Hlawka inequality, thereby prompting this line of research.

\section*{AI Disclosure}
The author acknowledges the usage of AI for several aspects of this work: ChatGPT-5.6 Sol, ChatGPT-6 Astra, and Gemini 2.5 Pro were used for literature search, proof ideas, and a review of the manuscript.

An initial version of the proof of Theorem \ref{thm: all lpn} was generated by ChatGPT-5.6 Sol during a prompt to solve Theorem \ref{thm: n=3 reduction} directly; the author made substantial changes to its presentation. The same model generated the example in Theorem \ref{thm: l^2 equivalent}; the author kept the example but simplified its exposition, most notably by breaking up the proof into several lemmas. Parts of the proofs of Theorem \ref{thm: sobolev nakano} and Lemma \ref{lemma: integration estimate} were discovered by ChatGPT-6 Astra.

The entirety of the manuscript has been written and checked by the author, and all computations have been independently verified. The author takes full responsibility for all content of the paper.

\printbibliography

\end{document}